\numberwithin{equation}{section}
\numberwithin{subsection}{section}
\newtheorem{theorem}{Theorem}[section]
\newtheorem{proposition}[theorem]{Proposition}
\newtheorem{proposition-definition}[theorem]
{Proposition-Definition}
\newtheorem{corollary}[theorem]{Corollary}
\newtheorem{lemma}[theorem]{Lemma}
\theoremstyle{definition}
\newtheorem{definition}[theorem]{Definition}
\newtheorem{notation}[theorem]{Notation}
\newtheorem{remark}[theorem]{Remark}
\theoremstyle{remark}
\newcommand\cA{\mathcal{A}} \newcommand\cB{\mathcal{B}}
\newcommand\cC{\mathcal{C}} 
\newcommand\cE{\mathcal{E}} \newcommand\cF{\mathcal{F}}
 \newcommand\cH{\mathcal{H}}
 \newcommand\cL{\mathcal{L}}
\newcommand\cM{\mathcal{M}} 
\newcommand\cO{\mathcal{O}}
\newcommand\cU{\mathcal{U}} 
 \newcommand\cX{\mathcal{X}}
 \newcommand\cZ{\mathcal{Z}}
\renewcommand\AA{\mathbb{A}} 
\newcommand\GG{\mathbb{G}} \newcommand\HH{\mathbb{H}}
 \newcommand\PP{\mathbb{P}}
\newcommand\QQ{\mathbb{Q}} 
 \newcommand\VV{\mathbb{V}}
 \newcommand\ZZ{\mathbb{Z}}
\newcommand\rma{\mathrm{a}}
\newcommand\rmm{\mathrm{m}}
\newcommand\arr{\ifinner\to\else\longrightarrow\fi}
\newcommand\arrto{\ifinner\mapsto\else\longmapsto\fi}
\renewcommand\H{\operatorname{H}}
\newcommand\into{\hookrightarrow}
\newcommand\im[1]{\operatorname{im}(#1)}
\def\displaytimes_#1{\mathrel{\mathop{\times}\limits_{#1}}}
\def\displayotimes_#1{\mathrel{\mathop{\bigotimes}\limits_{#1}}}
\newcommand\aut{\operatorname{Aut}}
\newcommand\id{\mathrm{id}}
\newlength{\ignora}
\renewcommand{\setminus}{\smallsetminus}
\newcommand{\gm}{\GG_{\rmm}}
\newcommand{\GL}{\mathrm{GL}}
\newcommand{\PGL}{\mathrm{PGL}}
\newcommand{\ga}{\GG_{\rma}}
\DeclareFontFamily{U}{mathx}{\hyphenchar\font45}
\DeclareFontShape{U}{mathx}{m}{n}{
	<5> <6> <7> <8> <9> <10>
	<10.95> <12> <14.4> <17.28> <20.74> <24.88>
	mathx10
}{}
\DeclareSymbolFont{mathx}{U}{mathx}{m}{n}
\DeclareMathAccent{\widecheck}{0}{mathx}{"71}
\DeclareMathAccent{\wideparen}{0}{mathx}{"75}
\renewcommand{\epsilon}{\varepsilon}
\newcommand{\Mbar}{\overline{\cM}}
\newcommand{\Mtilde}{\widetilde{\mathcal M}}
\newcommand{\Ctilde}{\widetilde{\mathcal C}}
\newcommand{\Htilde}{\widetilde{\cH}}
\newcommand{\ThTilde}{\widetilde{\Theta}}
\newcommand{\Detilde}{\widetilde{\Delta}}
\newcommand{\ch}[1][*]{\operatorname{CH}^{#1}}
\newcommand{\mt}{\widetilde{\mathcal M}}
\begin{document}

\title{The (almost) integral Chow ring of $\Mbar_3$}
\author{Michele Pernice}
\email{m.pernice(at)kth.se}
\address{KTH, Room 1642, Lindstedtsv\"agen 25,
114 28, Stockholm }

\classification{14H10 (primary), 14H20 (secondary)}

\keywords{Intersection theory of moduli of curves, $A_r$-stable curves}

\begin{abstract}
	This paper is the fourth in a series of four papers aiming to describe the (almost integral) Chow ring of $\Mbar_3$, the moduli stack of stable curves of genus $3$. In this paper, we finally compute the Chow ring of $\Mbar_3$ with $\ZZ[1/6]$-coefficients.
\end{abstract}

\maketitle
\section*{Introduction}

The geometry of the moduli spaces of curves has always been the subject of intensive investigations, because of its manifold implications, for instance in the study of families of curves. One of the main aspects of this investigation is the intersection theory of these spaces, which has both enumerative and geometrical implication. In his groundbreaking paper \cite{Mum}, Mumford introduced the intersection theory with rational coefficients for the moduli spaces of stable curves. He also computed the Chow ring (with rational coefficients) of $\Mbar_2$, the moduli space of stable genus $2$ curves. While the rational Chow ring of $\cM_g$, the moduli space of smooth curves, is known for $2\leq g\leq 9$ (\cite{Mum}, \cite{Fab}, \cite{Iza}, \cite{PenVak}, \cite{CanLar}), the computations in the stable case are much harder. The complete description of the rational Chow ring has been obtained only for genus $2$ by Mumford and for genus $3$ by Faber in \cite{Fab}. In his PhD thesis, Faber also computed the rational Chow ring of $\Mbar_{2,1}$, the moduli space of $1$-pointed stable curves of genus $2$.

Edidin and Graham introduced in \cite{EdGra} the intersection theory of global quotient stacks with integer coefficients. It is a more refined invariant but as expected, the computations for the Chow ring with integral coefficients of the moduli stack of curves are much harder than the ones with rational coefficients. To date, the only complete description for the integral Chow ring of the moduli stack of stable curves is the case of $\Mbar_2$, obtained by Larson in \cite{Lar} and subsequently with a different strategy by Di Lorenzo and Vistoli in \cite{DiLorVis}. It is also worth mentioning the result of Di Lorenzo, Pernice and Vistoli regarding the integral Chow ring of $\Mbar_{2,1}$, see \cite{DiLorPerVis}.

The aim of this series of paper is to describe the Chow ring with $\ZZ[1/6]$-coefficients of the moduli stack $\Mbar_3$ of stable genus $3$ curves. This provides a refinement of the result of Faber with a completely indipendent method. The approach is a generalization of the one used in \cite{DiLorPerVis}: we introduce an Artin stack, which is called the stack of $A_r$-stable curves, where we allow curves with $A_r$-singularities to appear. The idea is to compute the Chow ring of this newly introduced stack in the genus $3$ case and then, using localization sequence, find a description for the Chow ring of $\Mbar_3$. The stack $\Mtilde_{g,n}$ introduced in \cite{DiLorPerVis} is cointained as an open substack inside our stack. We state the main theorem. 

\begin{theorem}
	The Chow ring of $\Mbar_3$ with $\ZZ[1/6]$-coefficients is the quotient of the graded polynomial algebra 
	$$\ZZ[1/6,\lambda_1,\lambda_2,\lambda_3,\delta_{1},\delta_{1,1},\delta_{1,1,1},H]$$
	where 
	\begin{itemize}
		\item[] $\lambda_1,\delta_1,H$ have degree $1$, \item[]$\lambda_2,\delta_{1,1}$ have degree $2$, \item[]$\lambda_3,\delta_{1,1,1}$ have degree $3$.
	\end{itemize}
	The quotient ideal is generated by 15 homogeneous relations, where
	\begin{itemize}
		\item  $1$ of them is in codimension $2$,
		\item  $5$ of them are in codimension $3$,
		\item  $8$ of them are in codimension $4$,
		\item  $1$ of them is in codimension $5$.
	\end{itemize}  
\end{theorem}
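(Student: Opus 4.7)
The plan is to bootstrap from the Chow ring of the larger Artin stack $\widetilde{\cM}_3$ of $A_r$-stable genus $3$ curves, which has been computed in the earlier papers of this series, and to descend along the open immersion $\Mbar_3 \hookrightarrow \widetilde{\cM}_3$ via the excision exact sequence
\[
\ch[*-c](\widetilde{\cM}_3 \setminus \Mbar_3) \xrightarrow{i_*} \ch[*]{\widetilde{\cM}_3} \xrightarrow{j^*} \ch[*]{\Mbar_3} \longrightarrow 0.
\]
The complement $\widetilde{\cM}_3 \setminus \Mbar_3$ is the closed locus parametrizing $A_r$-stable curves that carry at least one singularity of type $A_r$ with $r \geq 2$; it admits a natural stratification by singularity type, with each stratum realized (up to étale covers and quotient by finite groups acting on marked points) as a product of lower-dimensional moduli of $A_r$-stable pointed curves whose Chow theory was also handled in the preceding papers.

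First, I would fix the presentation of $\ch[*]{\widetilde{\cM}_3}[1/6]$ already obtained, whose generators $\lambda_1,\lambda_2,\lambda_3,\delta_1,\delta_{1,1},\delta_{1,1,1},H$ (Hodge classes, boundary classes of curves with disconnecting nodes of various depths, and the hyperelliptic class) restrict non-trivially along $j^*$. Since $j^*$ is surjective on Chow groups and these classes are defined in a way that is compatible with the open immersion into $\Mbar_3$, I would identify them with the generators in the statement. The generators on the stable side are therefore immediately known, and the task reduces to determining the kernel of $j^*$.

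Next I would compute, stratum by stratum, the pushforwards $i_*(\alpha)$ of an explicit set of generators $\alpha$ for the Chow groups of each component of the closed complement. Each such generator is a polynomial in the Chern classes of the relative dualizing sheaf, the cotangent lines at the marked nodes, and the pull-backs of $\lambda_i,\delta$-classes from lower-genus factors, and can be pushed forward using the gluing morphisms together with the excess intersection / self-intersection formulas for the boundary. Organizing the calculation by codimension gives exactly the claimed distribution: one codimension-$2$ relation coming from the locus of curves with an $A_2$-singularity, five codimension-$3$ relations (from the locus with an $A_3$-singularity and codimension-$1$ classes on the $A_2$-stratum, etc.), eight in codimension $4$ and one in codimension $5$, matching the numerology of the stratification by singularity type in genus $3$.

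The hard part will be the second half: producing a \emph{complete} set of generators for the ideal $\ker(j^*)$, not merely an inclusion. This requires showing that the images of all stratum generators I compute actually exhaust $i_*$, which in turn demands a careful analysis of the Chow groups of each stratum (including possible torsion after inverting $6$) and of the group actions arising from the automorphisms permuting $A_r$-singular points. To close the argument I would compare the resulting quotient ring with $\ch[*]{\widetilde{\cM}_3}[1/6]$ modulo the kernel, match ranks dimension by dimension against the expected contributions, and check the final presentation by Gröbner-basis type reductions to ensure that no further relations are hidden, thereby obtaining the ring structure asserted in the theorem.
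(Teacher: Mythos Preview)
Your overall shape is right: the paper does use the excision sequence for $\Mbar_3 \hookrightarrow \Mtilde_3$ and does stratify the complement by singularity type via the stacks $\cA_n$ for $2\le n\le 7$. But two essential points are missing or misdescribed.

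First, you misread where the fifteen relations come from. They are \emph{not} all pushforwards from the $A_n$-strata. Nine of them ($k_h$, $k_1(1)$, $k_1(2)$, $k_{1,1}(1)$, $k_{1,1}(2)$, $k_{1,1}(3)$, $k_{1,1,1}(1)$, $k_{1,1,1}(4)$, $m(1)$) are relations already present in the presentation of $\ch(\Mtilde_3)$ from the previous paper, which survive after one quotients further. Only six of the fifteen are new contributions from the complement: $[\cA_2]$, $[\cA_3]$, $[\cA_3^1]$, $[\cA_4]$, $\delta_1^c$, $\delta_{1,1}^c$. The higher-singularity classes $[\cA_5^0]$, $[\cA_5^{\rm ns}]$, $[\cA_6]$, $[\cA_7]$ are computed but turn out to lie in the ideal generated by the others. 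So your ``one in codimension 2 from $A_2$, five in codimension 3 from $A_3$ plus codimension-1 classes on the $A_2$-stratum, \dots'' accounting does not match what actually happens.

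Second, and more seriously, you are missing the key technical step that makes the stratum-by-stratum pushforward tractable. One does not push forward ``an explicit set of generators'' of $\ch(\cA_n)$ in the naive sense. The paper proves that for each $n\ge 3$ (and for each connected component $\cA_{2m-1}^{\rm ns}$, $\cA_{2m-1}^i$ when $n$ is odd) the pullback $\rho_n^*:\ch(\Mtilde_3)\to\ch(\cA_n)$ is \emph{surjective}; by the projection formula this forces the image of $\rho_{n,*}$ to be the principal ideal generated by the fundamental class $\rho_{n,*}(1)$. These surjectivity statements are the content of Propositions~\ref{prop:rho-6-surj}--\ref{prop:rho-3-surj} and require identifying each $\cA_n$ concretely (as an affine bundle, a $\gm$-quotient, etc.) and checking that enough of $H$, $\delta_1$, $\lambda_1$ restrict to generators. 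For $n=2$ surjectivity fails, and a separate argument (\Cref{prop:gener-rho-2}) shows the image is generated by exactly three classes, geometrically $[\cA_2]$, $[\Detilde_1^c]$, $[\Detilde_{1,1}^c]$. Without these surjectivity results you have no a priori finite generating set for $\ker j^*$, and your proposed closure by ``rank matching'' and ``Gr\"obner-basis reductions'' cannot substitute: rank matching presupposes you already know the target ring, and Gr\"obner reductions only test redundancy among relations you already have, not completeness.

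Finally, the explicit computation of each surviving fundamental class is done not by pushing forward along gluing maps in the usual tautological-class sense, but by restricting the class to each stratum of the stratification $\Detilde_{1,1,1}\subset\Detilde_{1,1}\subset\Detilde_1\subset\Mtilde_3$ (and $\Htilde_3$), and then reassembling via the gluing lemma (\Cref{lem:gluing}), which relies on the non-zero-divisor property of the normal bundles. This is a genuinely different mechanism from the excess-intersection bookkeeping you sketch.
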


At the end of this paper, we explain how to compare the result of Faber with ours and comment about the information we lose in the process of tensoring with rational coefficients.

\subsection*{Stable $A_r$-curves and the strategy of the proof}

The strategy for the computation is the same used in \cite{DiLorPerVis} for the integral Chow ring of $\Mbar_{2,1}$. Suppose we have a closed immersion of smooth stacks $\cZ \into \cX$ and we know how to compute the Chow rings of $\cZ$ and $\cX \setminus \cZ$. We would like to use the well-known localization sequence
$$ 
 \ch(\cZ) \rightarrow \ch(\cX) \rightarrow \ch(\cX \setminus \cZ) \rightarrow 0
$$
to get the complete description of the Chow ring of $\cX$. To this end, we  make use of a patching technique which is at the heart of the Borel-Atiyah-Seigel-Quillen localization theorem, which has been used by many authors in the study of equivariant cohomology, equivariant Chow ring and equivariant K-theory. See the introduction of \cite{DiLorVis} for a more detailed discussion. 

However, without information regarding the kernel of the pushforward of the closed immersion $\cZ \into \cX$, there is no hope to get the complete description of the Chow ring of $\cX$. If the top Chern class of the normal bundle $N_{\cZ|\cX}$ is a non-zero divisor inside the Chow ring of $\cZ$, we can recover $\ch(\cX)$ from $\ch(\cX \setminus \cZ)$, $\ch(\cZ)$ and some patching information. We will refer to the condition on the top Chern class of the normal bundle as the \emph{gluing condition}. The gluing condition implies that the troublesome kernel is trivial. See \Cref{lem:gluing} for a more detailed statement. 

Unfortunately, there is no hope that this condition is verified if $\cZ$ is a Deligne-Mumford separated stack, because in this hypothesis the integral Chow ring is torsion above the dimension of the stack. This follows from Theorem 3.2 of \cite{EdGra}. This is exactly the reason that motivated the authors of  \cite{DiLorPerVis} to introduce the stack of cuspidal stable curves, which is not a Deligne-Mumford separated stack because it has some positive-dimensional affine stabilizers. However, introducing cusps is not enough in the case of $\Mbar_3$ to have the gluing condition verified (for the stratification we choose). This motivated us to introduce a generalization of the moduli stack of cuspidal stable curves, allowing curves with $A_r$-singular points to appear in our stack. They are a natural generalization of nodes and cusps, and \'etale locally are plane singularities described by the equation $y^2=x^{n+1}$.

\subsection*{Future Prospects}

As pointed out in the introduction of \cite{DiLorPerVis},  the limitations of this strategy are not clear. It seems that the more singularities we add, the more it is likely that the gluing condition is verified. However, adding more singularities implies that we  have to eventually compute the relations coming from such loci, which can be hard. Moreover, we are left with a difficult problem, namely to find the right stratification for these newly introduced stacks. We hope that this strategy will be useful to study the intersection theory of $\Mbar_{3,1}$ or $\Mbar_{4}$. Moreover, we believe that our approach can be used to obtain a complete description for the integral Chow ring of $\Mbar_3$. We have not verified the gluing condition with integer coefficients because we do not know the integral Chow ring of some of the strata. However, one can try to prove alternative descriptions for these strata, for instance using weighted blowups, and compute their integral Chow ring using these descriptions. See \cite{Ink} for an example.

\subsection*{Outline of the paper}
This is the fourth (and last) paper in the series. It focuses on computing the Chow ring of the moduli stack of stable curves of genus $3$.

Specifically, \Cref{sec:1} is dedicated to recall the theory of $A_r$-stable curves as discussed in \cite{Per1}, the theory of hyperelliptic $A_r$-stable curves as in \cite{Per2} and the setting introduced in \cite{Per3} for the computation of the Chow ring of $\Mtilde_3^7$. In particular, we state again the main results needed for this paper, such as the existence of the contraction morphism, the description of the hyperelliptic locus and the description of the Chow ring of $\Mtilde_3^7$. In the last part of the section, we discuss the strategy adopted for the computation of the Chow ring of $\Mbar_3$. Specifically, we introduce a stratification of the closed complement of $\Mbar_3$ inside $\Mtilde_3^7$ and explain why it is enough to reduce to study the locally closed strata of the stratification. 

In \Cref{sec:2}, we find a list of generators for the ideal of relation coming from the closed complement of $\Mbar_3$ inside $\Mtilde_3^7$. We prove infact that such ideal is generated only by the fundamental class of the locally closed strata introduced in \Cref{sec:1}, plus some relations coming from the open stratum. 

In \Cref{sec:3}, we explicitily compute the relations using the gluing lemma, restricting the fundamental classes of the locally closed strata to the stratification used in \cite{Per3} to compute the Chow ring of $\Mtilde_3^7$. This is the heavier part of the paper in terms of computations and it uses a lot of results and notations from \cite{Per3}.

Finally, we summarize the results in the main theorem in \Cref{sec:4}, where we discuss the connection between our description and Faber's one of the Chow ring of $\Mbar_3$. 

\section{Preliminaries and strategy}\label{sec:1}

In this section, we recall the definition of the moduli stack $\Mtilde_{g,n}^r$ parametrizing $n$-pointed $A_r$-stable curves of genus $g$ and some results regarding this moduli stack. For a more detailed treatment of the subject, see \cite{Per1}. Furthermore, we recall the definition of the moduli stack $\Htilde_g^r$ parametrizing hyperelliptic $A_r$-stable curves of genus $g$ and the alternative description using the theory of cyclic covers. For a more detailed treatment of the subject, see \cite{Per2}. Moreover, we recall the strategy used for the computation of the (almost) integral Chow ring of $\Mtilde_3^7$, in particular stating the gluing lemma (see \Cref{lem:gluing} and describing the stratification we used. For a more detailed treatment of the subject, see \cite{Per3}. Finally, we explain how to obtain the description of the (almost) integral Chow ring of $\Mbar_3$ from the one of $\Mtilde_3^7$.

\subsection*{Moduli stack of $A_r$-stable curves}
Fix a nonnegative integer $r$. Let $g$ be an integer with $g\geq 2$ and $n$ be a nonnegative integer.

\begin{definition}	
Let $k$ be an algebraically closed field and $C/k$ be a proper reduced connected one-dimensional scheme over $k$. We say the $C$ is a \emph{$A_r$-prestable curve} if it has at most $A_r$-singularity, i.e. for every $p\in C(k)$, we have an isomorphism
		$$ \widehat{\cO}_{C,p} \simeq k[[x,y]]/(y^2-x^{h+1}) $$ 
	with $ 0\leq h\leq r$. Furthermore, we say that $C$ is $A_r$-stable if it is $A_r$-prestable and the dualizing sheaf $\omega_C$ is ample. A $n$-pointed $A_r$-stable curve over $k$ is $A_r$-prestable curve together with $n$ smooth distinct closed points $p_1,\dots,p_n$ such that $\omega_C(p_1+\dots+p_n)$ is ample.
\end{definition}
\begin{remark}
	Notice that a $A_r$-prestable curve is l.c.i by definition, therefore the dualizing complex is in fact a line bundle. 
\end{remark}

We fix a base field $\kappa$ where all the primes smaller than $r+1$ are invertible. Every time we talk about genus, we intend arithmetic genus, unless specified otherwise. We recall a useful fact.

\begin{remark}\label{rem:genus-count}
	Let $C$ be a connected, reduced, one-dimensional, proper scheme over an algebraically closed field. Let $p$ be a rational point which is a singularity of $A_r$-type. We denote by $b:\widetilde{C}\arr C$ the partial normalization at the point $p$ and by $J_b$ the conductor ideal of $b$. Then a straightforward computation shows that 
	\begin{enumerate}
		\item if $r=2h$, then $g(C)=g(\widetilde{C})+h$;
		\item if $r=2h+1$ and $\widetilde{C}$ is connected, then $g(C)=g(\widetilde{C})+h+1$,
		\item if $r=2h+1$ and $\widetilde{C}$ is not connected, then $g(C)=g(\widetilde{C})+h$.
	\end{enumerate}
    If $\widetilde{C}$ is not connected, we say that $p$ is a separating point. Furthermore, Noether formula gives us that $b^*\omega_C \simeq \omega_{\widetilde{C}}(J_b^{\vee})$.
\end{remark}

 We can define $\Mtilde_{g,n}^r$ as the fibered category over $\kappa$-schemes  whose objects are the datum of $A_r$-stable curves over $S$ with $n$ distinct sections $p_1,\dots,p_n$ such that every geometric fiber over $S$ is a  $n$-pointed $A_r$-stable curve. These families are called \emph{$n$-pointed $A_r$-stable curves} over $S$. Morphisms are just morphisms of $n$-pointed curves.

We recall the following description of $\Mtilde_{g,n}^r$. See Theorem 2.2 of \cite{Per1} for the proof of the result. 

\begin{theorem}\label{theo:descr-quot}
	$\Mtilde_{g,n}^r$ is a smooth connected algebraic stack of finite type over $\kappa$. Furthermore, it is a quotient stack: that is, there exists a smooth quasi-projective scheme X with an action of $\GL_N$ for some positive $N$, such that 
	$ \Mtilde_{g,n}^r \simeq [X/\GL_N]$.
\end{theorem}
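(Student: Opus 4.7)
The plan is to realize $\Mtilde_{g,n}^r$ as a quotient of a locally closed subscheme of a Hilbert scheme by $\GL_N$, and then to verify the listed geometric properties (smoothness, connectedness, finite type) on a presentation.

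\textbf{Step 1 (construction of $X$).} For an $n$-pointed $A_r$-stable curve $(C, p_1, \dots, p_n)$, the log-dualizing sheaf $\omega_C(p_1 + \dots + p_n)$ is ample by definition. Since an $A_r$-prestable curve is l.c.i., $\omega_C$ is a line bundle and so is $L_m := \omega_C^{\otimes m}(m(p_1+\dots+p_n))$, which is very ample for $m$ large enough (uniformly in $(C,p_i)$ by boundedness). By Riemann--Roch, $h^0(L_m) = m(2g-2+n) - g + 1 =: N$ and $h^1(L_m)=0$. Fix such an $m$. Let $H$ be the Hilbert scheme of closed subschemes of $\PP^{N-1}$ with the Hilbert polynomial of $L_m$, and define $X \subset H \times (\PP^{N-1})^n$ as the subscheme parametrizing $(C \subset \PP^{N-1}, p_1, \dots, p_n)$ such that $C$ is $A_r$-stable, the $p_i$ are distinct smooth points of $C$, and $\cO_{\PP^{N-1}}(1)\rest{C}\simeq L_m$. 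The conditions cutting out $X$ are either open or locally closed, so $X$ is a quasi-projective $\kappa$-scheme.

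\textbf{Step 2 (quotient presentation).} The group $\GL_N$ acts on $X$ via its action on $\PP^{N-1}$. Given a family $(\cC \to S, \sigma_1, \dots, \sigma_n)$ of $n$-pointed $A_r$-stable curves, vanishing of $R^1\pi_* L_m$ (together with cohomology and base change) makes $\pi_* L_m$ into a rank-$N$ vector bundle on $S$; its frame bundle is a $\GL_N$-torsor $P\to S$, and the universal embedding pulls back to a $\GL_N$-equivariant map $P \to X$. This yields an equivalence $\Mtilde_{g,n}^r \simeq [X/\GL_N]$ by the standard descent argument. In particular $\Mtilde_{g,n}^r$ is an algebraic stack of finite type over $\kappa$.

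\textbf{Step 3 (smoothness and connectedness).} Smoothness of $\Mtilde_{g,n}^r$ is a deformation-theoretic statement: infinitesimal deformations of $(C, p_i)$ are controlled by $\ext^1(\Omega_C^1(\sum p_i),\cO_C)$ with obstructions in $\ext^2$, and the latter vanishes because $C$ is a one-dimensional l.c.i. Equivalently, the miniversal deformation of an $A_h$-singularity $y^2=x^{h+1}$ is the smooth family $y^2=x^{h+1}+a_{h-1}x^{h-1}+\dots+a_0$, where the hypothesis that the primes $\leq r+1$ are invertible in $\kappa$ is used. For connectedness, it suffices to note that every $A_r$-stable curve smooths in a one-parameter family by the previous remark, so $\cM_{g,n}$ is dense in $\Mtilde_{g,n}^r$; since $\cM_{g,n}$ is irreducible, connectedness follows. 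Consequently $X$ is smooth as a $\GL_N$-torsor over a smooth connected stack.

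\textbf{Main obstacle.} The most delicate technical ingredient is the openness of $A_r$-stability (with $r$ fixed) in an arbitrary flat family of curves, together with boundedness of the integer $m$ across the entire moduli problem. Both require a careful analysis of how $A_r$-singularities behave under specialization and of the semicontinuity of $h^0(L_m)$; once they are settled, the Hilbert scheme construction and the descent argument above are routine.
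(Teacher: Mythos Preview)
The paper does not give a proof of this theorem; it simply records the statement and cites \cite{Per1}, Theorem~2.2. Your proposal follows the classical Hilbert-scheme construction (tricanonical or higher log-canonical embedding, locally closed locus in $\operatorname{Hilb}\times(\PP^{N-1})^n$, quotient by $\GL_N$) together with the standard deformation-theoretic argument for smoothness and a density argument for connectedness. This is exactly the template one expects the cited reference to follow, and there is no visible gap in your outline; in particular your identification of the ``main obstacle'' (openness of $A_r$-stability and uniform boundedness of $m$) is accurate, and these are indeed the points that require genuine work in \cite{Per1}.

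One small correction: the characteristic hypothesis in the paper is that primes \emph{smaller than} $r+1$ (i.e.\ $\leq r$) are invertible, not $\leq r+1$. Also, your claim that smoothness of the miniversal deformation of $y^2=x^{h+1}$ is where the characteristic hypothesis enters is slightly misplaced: the base of the miniversal deformation of an isolated hypersurface singularity is smooth in any characteristic. The characteristic hypothesis is rather used to ensure that the \'etale-local normal form $y^2=x^{h+1}$ is actually attained (e.g.\ the Weierstrass-type change of coordinates and square-completion require $2$ invertible, and separating the $A_h$ types for $h\leq r$ requires the relevant small primes to be units). This does not affect the validity of your argument, only the attribution of where the hypothesis is consumed.
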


\begin{remark}\label{rem: max-sing}
Recall that we have an open embedding $\Mtilde_{g,n}^r  \subset \Mtilde_{g,n}^s$ for every $r\leq s$. Notice that $\Mtilde_{g,n}^r=\Mtilde_{g,n}^{2g+1}$ for every $r\geq 2g+1$. 
\end{remark}

The usual definition of the Hodge bundle extends to our setting. See Proposition 2.4 of \cite{Per1} for the proof. As a consequence we obtain a locally free sheaf $\HH_{g}$ of rank~$g$ on $\mt_{g, n}^r$, which is called \emph{Hodge bundle}. 

Furthermore, we recall the existence of the (minimal) contraction morphism. This is Theorem 2.5 of \cite{Per1}.

\begin{theorem}\label{theo:contrac}
	We have a morphism of algebraic stacks 
	$$ \gamma:\Mtilde_{g,n+1}^r \longrightarrow \Ctilde_{g,n}^r$$
	where $\Ctilde_{g,n}^r$ is the universal curve of $\Mtilde_{g,n}^r$. Furthermore, it is an open immersion and its image is the open locus $\Ctilde_{g,n}^{r,\leq 2}$ in $\Ctilde_{g,n}^r$ parametrizing $n$-pointed $A_r$-stable curves $(C,p_1,\dots,p_n)$ of genus $g$ and a (non-necessarily smooth) section $q$ such that $q$ is an $A_h$-singularity for $h\leq 2$.
\end{theorem}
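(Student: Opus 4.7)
The plan is to generalize Knudsen's classical contraction construction from the Deligne--Mumford stable setting to the $A_r$-stable setting. First I would define $\gamma$ on a family $\pi \colon \cC \to S$ of $(n+1)$-pointed $A_r$-stable curves with sections $\sigma_1, \ldots, \sigma_{n+1}$ by a Knudsen-type contraction: on each geometric fiber, collapse any subcurve containing $p_{n+1}$ that destabilizes upon forgetting $p_{n+1}$. Globally, this is realized as the relative $\proj$ of $\bigoplus_{N \ge 0} \pi_* \omega_{\cC/S}(\sigma_1 + \cdots + \sigma_n)^{\otimes N}$; standard Knudsen-style arguments show that the resulting $\cC' \to S$ is a flat family of $n$-pointed $A_r$-stable curves, and the image of $\sigma_{n+1}$ provides a section $q$ of $\cC'$.

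Next I would verify that $q$ is always an $A_h$-singularity with $h \le 2$ by classifying, on geometric fibers, the possible destabilizing subcurves containing $p_{n+1}$ in an $A_r$-stable curve. Each such configuration is sufficiently restricted (a smooth rational tail carrying one other marked point, a smooth rational bridge between two nodes, or an analogous cuspidal configuration) that its contraction introduces at most a nodal or cuspidal singularity at the image of $p_{n+1}$; the arithmetic genus count in \Cref{rem:genus-count} matches these three cases with $A_0$, $A_1$ and $A_2$ respectively. This factors $\gamma$ through the open substack $\Ctilde_{g,n}^{r,\le 2}$.

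To upgrade $\gamma$ to an open immersion onto $\Ctilde_{g,n}^{r,\le 2}$, I would construct an inverse on this open substack by reverse surgery: given $(C, p_1, \ldots, p_n, q)$ with $q$ an $A_h$-singularity of the section for $h \le 2$, insert the appropriate rational component at $q$ carrying the new marked point $p_{n+1}$ (a $\PP^1$-bubble when $q$ is a smooth section colliding with an existing marking, a $\PP^1$-bridge when $q$ is a node, and the analogous surgery at a cusp). One then checks functoriality in families and that $\gamma$ composed with this inverse is the identity in both directions, yielding the isomorphism $\Mtilde_{g,n+1}^r \simeq \Ctilde_{g,n}^{r,\le 2}$; openness of $\Ctilde_{g,n}^{r,\le 2} \subset \Ctilde_{g,n}^r$ follows from upper semicontinuity of the singularity type along a section.

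The main obstacle is the family version of both the contraction and the surgery: on geometric fibers each modification is elementary, but proving that the result is a flat family of $A_r$-stable curves requires a Knudsen-style flatness argument tailored to $A_r$-singularities. The key technical input is the match between the versal deformation of an $A_h$-singularity ($h \le 2$) and the miniversal deformation of the corresponding bubbled configuration on the $(n+1)$-pointed side; granted this, smoothness of both $\Mtilde_{g,n+1}^r$ and $\Ctilde_{g,n}^{r,\le 2}$ (the latter being open in the smooth universal curve) together with bijectivity on geometric points suffices to conclude.
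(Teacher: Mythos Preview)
The paper does not prove this statement; it simply records it as Theorem~2.5 of \cite{Per1}, so there is no in-paper argument to compare against. Your Knudsen-style strategy (contract via the relative $\proj$ of the log-canonical ring, classify destabilizing components fiberwise, invert by surgery on $\Ctilde_{g,n}^{r,\le 2}$) is the natural approach and is presumably close to what the cited reference does.

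One point in your case analysis deserves sharpening. Your list of destabilizing configurations (``a smooth rational tail carrying one other marked point, a smooth rational bridge between two nodes, or an analogous cuspidal configuration'') leaves the third case vague, and in the $A_r$-setting this is exactly where the new phenomenon sits. The relevant component is a smooth $\PP^1$ carrying only $p_{n+1}$ and meeting the rest of the curve in a single separating tacnode ($A_3$): here $\deg\,\omega_C|_{\Gamma}=-2+2=0$, so $\Gamma$ must be contracted once $p_{n+1}$ is forgotten, and the genus bookkeeping of \Cref{rem:genus-count} forces the image point $q$ to acquire a cusp ($A_2$). The paper itself relies on precisely this case later (the ``almost-bridge'' appearing in the computation of $[\cA_3^1]$, where a $\PP^1$ attached at a tacnode in $\Mtilde_{2,1}\setminus\ThTilde_1$ corresponds under $\gamma$ to a cuspidal point of the underlying genus~$2$ curve). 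Making this configuration explicit, and checking that nothing worse can occur (for instance, a $\PP^1$ attached at a single $A_5$-point already has $\deg\,\omega_C|_{\Gamma}=1$ and stays stable), is what pins down the bound $h\le 2$ and should be spelled out rather than left as ``analogous''.
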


\subsection*{The hyperelliptic locus}

We recall the definition of hyperelliptic $A_r$-stable curves.

\begin{definition}
	Let $C$ be an $A_r$-stable curve of genus $g$ over an algebraically closed field. We say that $C$ is hyperelliptic if there exists an involution $\sigma$ of $C$ such that the fixed locus of $\sigma$ is finite and the geometric categorical quotient, which is denoted by $Z$, is a reduced connected nodal curve of genus $0$. We  call the pair $(C,\sigma)$ a \emph{hyperelliptic $A_r$-stable curve} and such $\sigma$  is called a \emph{hyperelliptic involution}.
\end{definition}

We define $\Htilde_g^r$ as the following fibered category: its objects are the data of a pair $(C/S,\sigma)$ where $C/S$ is an $A_r$-stable curve over $S$ and $\sigma$ is an involution of $C$ over $S$ such that $(C_s,\sigma_s)$ is a $A_r$-stable hyperelliptic curve of genus $g$ for every geometric point $s \in S$. These are called \emph{hyperelliptic $A_r$-stable curves over $S$}. A morphism is a morphism of $A_r$-stable curves that commutes with the involutions. 

Now we introduce another description of $\Htilde_g^r$, useful for understanding the link with the smooth case, using cyclic covers of twisted curves. We refer to \cite{AbOlVis} for the theory of twisted nodal curves, although we consider only twisted curves with $\mu_2$ as stabilizers and with no markings.

\begin{definition}\label{def:hyp-A_r}
	Let $\cZ$ be a twisted nodal curve of genus $0$ over an algebraically closed field. We denote by $n_{\Gamma}$ the number of stacky points of $\Gamma$ and by $m_{\Gamma}$ the number of intersections of $\Gamma$ with the rest of the curve for every $\Gamma$ irreducible component of $\cZ$. Let $\cL$ be a line bundle on $\cZ$ and $i:\cL^{\otimes 2} \rightarrow \cO_{\cZ}$ be a morphism of $\cO_{\cZ}$-modules.  We denote by $g_{\Gamma}$ the quantity $n_{\Gamma}/2-1-\deg\cL\vert_{\Gamma}$. 
	\begin{itemize}
	\item[(a)] We say that $(\cL,i)$ is hyperelliptic if the following are true:
			\begin{itemize}
				\item[(a1)] the morphism $\cZ \rightarrow B\GG_m$ induced by $\cL$ is representable,
				\item[(a2)] $i^{\vee}$ does not vanish restricted to any stacky point.
			\end{itemize}
	\item[(b)] We say that $(\cL,i)$ is $A_r$-prestable and hyperelliptic of genus $g$ if $(\cL,i)$ is hyperelliptic, $\chi(\cL)=-g$ and the following are true:
		\begin{itemize}
			\item[(b1)] $i^{\vee}$ does not vanish restricted to any irreducible component of $\cZ$ or equivalently the morphism $i:\cL^{\otimes 2 }\rightarrow \cO_{\cZ}$ is injective,
			\item[(b2)] if $p$ is a non-stacky node and $i^{\vee}$ vanishes at $p$, then $r\geq 3$ and the vanishing locus $\VV(i^{\vee})_p$ of $i^{\vee}$ localized at $p$ is a Cartier divisor of length $2$;
			\item[(b3)] if $p$ is a smooth point and $i^{\vee}$ vanishes at $p$, then the vanishing locus $\VV(i^{\vee})_p$ of $i^{\vee}$ localized at $p$ has length at most $r+1$.
		\end{itemize}	
	\item[(c)] We say that $(\cL,i)$ is $A_r$-stable and hyperelliptic of genus $g$ if it is $A_r$-prestable and hyperelliptic of genus $g$ and the following are true for every irreducible component $\Gamma$ in $\cZ$:
			\begin{itemize}
				\item[(c1)] if $g_{\Gamma}=0$ then we have $2m_{\Gamma}-n_{\Gamma}\geq 3$,
				\item[(c2)] if $g_{\Gamma}=-1$ then we have
				$m_{\Gamma}\geq 3$ ($n_{\Gamma}=0$).
			\end{itemize}
\end{itemize}
\end{definition}

Let us define now the stack classifying these data. We denote by $\cC(2,g,r)$ the fibered category defined in the following way: the objects are triplet $(\cZ\rightarrow S,\cL,i)$ where $\cZ \rightarrow S$ is a family of twisted curves of genus $0$, $\cL$ is a line bundle of $\cZ$ and $i:\cL^{\otimes 2}\rightarrow \cO_{\cZ}$ is a morphism of $\cO_{\cZ}$-modules such that the restrictions $(\cL_s,i_s)$ to the geometric fibers over $S$ are $A_r$-stable and hyperelliptic of genus $g$. Morphisms are defined as in \cite{ArVis}. 

We recall the following result, which gives us an alternative description of $\Htilde_g^r$. See Proposition 2.14 and Proposition 2.21 of \cite{Per2} for the proof of the result.
\begin{proposition}\label{prop:descr-hyper}
	The fibered category $\cC(2,g,r)$ is isomorphic to $\Htilde_g^r$.
\end{proposition}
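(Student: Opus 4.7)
The plan is to define functors in both directions between $\Htilde_g^r$ and $\cC(2,g,r)$ and verify that they are mutually quasi-inverse. One direction sends a hyperelliptic $A_r$-stable curve $(C/S,\sigma)$ to the triple $(\cZ,\cL,i)$, where $\cZ := [C/\sigma]$ is the quotient stack and $\pi\colon C \to \cZ$ the quotient map; the $\sigma$-action on $\pi_*\cO_C$ yields an eigenspace decomposition $\pi_*\cO_C = \cO_\cZ \oplus \cL$ with $\cL$ the anti-invariant summand (a line bundle, because $\pi$ is flat of degree $2$), and the algebra multiplication supplies the map $i\colon \cL^{\otimes 2}\to \cO_\cZ$. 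The other direction sends $(\cZ,\cL,i)$ to the relative spectrum of the $\cO_\cZ$-algebra $\cO_\cZ\oplus \cL$ whose multiplication is given by $i$, together with the involution $\sigma$ induced by the $\ZZ/2$-grading.

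That both round-trip compositions are the identity is formal, so the real work is to check that each construction lands in the correct target. This is an \'etale-local question on $\cZ$, and the analysis splits into four cases according to the local model: smooth non-stacky point, smooth stacky point with $\mu_2$-stabilizer, ordinary node, and stacky node. In each case one writes $\cZ$ and $\cL$ explicitly — for instance $[\AA^1/\mu_2]$ with $\cL$ the nontrivial character, as forced by (a1) of \Cref{def:hyp-A_r} — and reads off the local description of the cover $C \to \cZ$. The dictionary is the following: at a smooth non-stacky point, $i^{\vee}$ is a local function $f$ and $C$ is locally $\spec \cO[t]/(t^2-f)$, which has an $A_h$-singularity iff $\mathrm{ord}(f)=h+1$, matching (b3); at a smooth stacky point, condition (a2) forces $i^{\vee}$ to be nonvanishing, producing an unramified $\mu_2$-torsor and hence a smooth point on $C$; at nodes, (b2) discriminates between an ordinary node of $C$ lying over a node of $\cZ$ (vanishing order $0$) and a higher $A_{2h+1}$-singularity (vanishing order $2h$), with the stacky structure of $\cZ$ tracking whether $\sigma$ exchanges the two branches. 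The arithmetic genus count $\chi(\cO_C)=\chi(\cO_\cZ)+\chi(\cL)=1+\chi(\cL)$ identifies $\chi(\cL)=-g$ with $g(C)=g$, while the ampleness of $\omega_C$ — computed via the local ramification formula applied to $\pi_*\omega_C$ — translates exactly into the numerical inequalities (c1), (c2) on each irreducible component $\Gamma$ of $\cZ$.

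The main obstacle is the bookkeeping at nodes: correctly matching the four possibilities on $\cZ$ (stacky vs.\ non-stacky node, $i^{\vee}$ vanishing vs.\ not) to the corresponding possibilities on $C$, namely ordinary node with $\sigma$ exchanging the branches, ordinary node with $\sigma$ preserving each branch, non-separating $A_{2h+1}$-singularity, and separating $A_{2h+1}$-singularity, in the sense of \Cref{rem:genus-count}. Once this local dictionary is verified, faithfully flat descent together with condition (a1) packages the \'etale-local isomorphisms into a global equivalence of fibered categories over $\kappa$-schemes, establishing the claimed isomorphism $\cC(2,g,r)\simeq \Htilde_g^r$.
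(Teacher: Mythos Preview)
The paper does not actually prove this proposition: it is recalled from a previous paper in the series, with the sentence ``See Proposition~2.14 and Proposition~2.21 of \cite{Per2} for the proof of the result.'' There is therefore no in-text proof to compare against. Your outline --- passing from $(C,\sigma)$ to the stack quotient $\cZ=[C/\sigma]$ with its eigenspace decomposition $\pi_*\cO_C=\cO_\cZ\oplus\cL$, and conversely taking the relative spectrum of $\cO_\cZ\oplus\cL$ --- is exactly the natural construction and is almost certainly the one carried out in \cite{Per2}.

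One point you pass over a bit quickly: you list the local cases on $\cZ$ (smooth/stacky, node/stacky node) but do not explain why $[C/\sigma]$ is a twisted nodal curve in the first place. This requires analysing the action of $\sigma$ at each fixed singular point of $C$ (cusps, tacnodes, higher $A_h$) and checking that the stack quotient has at worst nodal coarse space with $\mu_2$-stabilisers; the hypothesis that the fixed locus of $\sigma$ is finite is what rules out the bad cases. This is the content behind the étale-local dictionary you describe, and in a full proof it should be stated rather than assumed. Likewise, the translation of ampleness of $\omega_C$ into (c1)--(c2) involves the observation that $\omega_C\simeq\pi^*(\omega_\cZ\otimes\cL^{\vee})$, which you allude to but do not make explicit. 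These are refinements rather than gaps; the architecture of your argument is sound.
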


Finally, we recall the following theorem which is a conseguence of Proposition 2.23 and Section 3 of \cite{Per2}.

\begin{proposition}\label{prop:smooth-hyp}
	The moduli stack $\Htilde_g^r$ of $A_r$-stable hyperelliptic curves of genus $g$ is smooth and the open $\cH_g$ parametrizing smooth hyperelliptic curves is dense in $\Htilde_g^r$. In particular $\Htilde_g^r$ is connected. Moreover, the natural morphism 
	$$ \Htilde_g^r \longrightarrow \Mtilde_g^r$$
	defined by the association $(C,\sigma) \mapsto C$ is a closed embedding between smooth algebraic stacks.
\end{proposition}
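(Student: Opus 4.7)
The plan is to exploit the description $\Htilde_g^r \simeq \cC(2,g,r)$ from Proposition~\ref{prop:descr-hyper}, replacing a hyperelliptic curve $(C,\sigma)$ by the triple $(\cZ,\cL,i)$ consisting of the quotient twisted curve of genus $0$, a line bundle $\cL$ and a section $i:\cL^{\otimes 2}\arr \cO_{\cZ}$ encoding the double cover $C\arr \cZ$. All three assertions---smoothness, density of $\cH_g$, and the closed embedding into $\Mtilde_g^r$---are attacked in this description.

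For smoothness and density, I would first study the forgetful morphism from $\cC(2,g,r)$ to the stack $\cT_0$ of twisted nodal curves of genus $0$, which is known to be smooth. Given an infinitesimal deformation of $\cZ$, the line bundle $\cL$ extends (the obstruction lies in $H^2$ of a coherent sheaf on a one-dimensional stack, hence vanishes), and the section $i$ lifts since the obstruction lies in $H^1(\cZ,\cL^{\otimes -2})$, whose vanishing follows from the numerical data $\chi(\cL)=-g$ together with the hyperellipticity conditions (a1)--(a2) of Definition~\ref{def:hyp-A_r}. This exhibits $\cC(2,g,r)$ as a relative open substack of a vector bundle over $\cT_0$, hence smooth. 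For density of $\cH_g$, I would produce explicit one-parameter smoothings: every node of $\cZ$ deforms within $\cT_0$, and any zero of $i$ of order $\geq 2$ perturbs into simple zeros in the fiber over the smoothed $\cZ$; the conditions (b1)--(b3) and (c1)--(c2) are preserved on the generic fiber, giving a family landing in $\cH_g$ generically. Connectedness of $\Htilde_g^r$ then follows from density combined with the classical connectedness of $\cH_g$.

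For the closed embedding, I would verify that the functor $\Htilde_g^r \arr \Mtilde_g^r$, $(C,\sigma)\mapsto C$, is a proper unramified monomorphism between smooth algebraic stacks, which suffices for a closed embedding. Monomorphism amounts to uniqueness of the hyperelliptic involution on an $A_r$-stable hyperelliptic curve of genus $g\geq 2$; this follows because $\sigma$ is intrinsically determined by the degree-$2$ morphism to the categorical quotient $Z$, which is essentially recovered from the canonical map. Properness is checked by the valuative criterion: a hyperelliptic involution on the generic fiber of a family extends to the special fiber thanks to uniqueness and the separatedness of $\Mtilde_g^r$. The main obstacle I expect is unramifiedness of the tangent map, since one must rule out first-order deformations of $(\cZ,\cL,i)$ that induce the trivial deformation of $C$; this requires a careful local analysis at the stacky points where the $\mu_2$-structure of $\cZ$ interacts with the vanishing of $i$, using that conditions (a1)--(a2) pin down $(\cZ,\cL,i)$ uniquely in terms of the double cover $C\arr\cZ$.
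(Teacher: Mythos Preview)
The paper does not give an in-line proof here; it simply records the statement as a consequence of Proposition~2.23 and Section~3 of the companion paper \cite{Per2}. So there is no detailed argument in this paper to compare against, beyond the fact that \cite{Per2} works through the same cyclic-cover description $\Htilde_g^r\simeq\cC(2,g,r)$ that you adopt. Your outline is therefore in the right spirit, and likely close to what \cite{Per2} actually does.

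Two points in your sketch deserve more care. First, in the smoothness argument you claim $H^1(\cZ,\cL^{\otimes -2})=0$ follows from $\chi(\cL)=-g$ together with conditions (a1)--(a2). On an irreducible twisted $\PP^1$ this is immediate, but $\cZ$ may be a nodal tree, and vanishing must be checked component by component; it is the stability conditions (c1)--(c2), not merely (a1)--(a2), that force $\cL^{\otimes -2}$ to have sufficiently positive degree on every irreducible component. You should also be explicit that the intermediate object between $\cC(2,g,r)$ and $\cT_0$ is the relative Picard stack over $\cT_0$, so the phrase ``open substack of a vector bundle over $\cT_0$'' elides a step.

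Second, the logical structure of your closed-embedding argument is slightly tangled. Once you establish that $(C,\sigma)\mapsto C$ is a monomorphism of stacks, unramifiedness is automatic and the morphism is representable; proper plus monomorphism then gives a closed immersion directly. So the ``main obstacle'' you flag at the end---ruling out first-order deformations of $(\cZ,\cL,i)$ inducing the trivial deformation of $C$---is not a separate task but is already contained in the monomorphism check. The genuine content lies in proving uniqueness of the hyperelliptic involution on an $A_r$-stable curve of genus $g\geq 2$: this is more delicate than in the smooth case because the canonical morphism may contract components, and one has to argue carefully on the irreducible pieces and across the singularities. Your appeal to the canonical map is the right starting point, but it is not a one-line argument.
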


\subsection*{The Chow ring of $\Mtilde_3^7$}

In this subsection, we describe briefly the stratey used for the computation of the Chow ring of $\Mtilde_3^7$. Every Chow ring is considered with $\ZZ[1/6]$-coefficients unless otherwise stated. Recall that our base field $\kappa$ has characteristic different from $2,3,5,7$.

First of all, we recall the gluing lemma. Let $i:\cZ\hookrightarrow\cX$ be a closed immersion of smooth global quotient stacks over $\kappa$ of codimension $d$ and let $\cU:=\cX\setminus \cZ$ be the open complement and $j:\cU \hookrightarrow \cX$ be the open immersion. It is straightforward to see that the pullback morphism $i^*:\ch(\cX)\rightarrow \ch(\cZ)$ induces a morphism $ \ch(\cU) \rightarrow \ch(\cZ)/(c_d(N_{\cZ|\cX}))$, where $N_{\cZ|\cX}$ is the normal bundle of the closed immersion. This morphism is denoted by $i^*$ by abuse of notation. 

Therefore, we have the following commutative diagram of rings:
$$
\begin{tikzcd}
\ch(\cX) \arrow[d, "j^*"] \arrow[rr, "i^*"] &  & \ch(\cZ) \arrow[d, "q"]     \\
\ch(\cU) \arrow[rr, "i^*"]                  &  & \frac{\ch(\cZ)}{(c_d(N_{\cZ|\cX}))}
\end{tikzcd}
$$
where $q$ is just the quotient morphism.

\begin{lemma}\label{lem:gluing}
  In the situation above, the induced map 
  $$\zeta: \ch(\cX)\longrightarrow \ch(\cZ)\times_\frac{\ch(\cZ)}{(c_d(N_{\cZ|\cX}))} \ch(\cU)$$
  is surjective and $\ker \zeta= i_* {\rm Ann}(c_d(N_{\cZ|\cX}))$. In particular, if $c_d(N_{\cZ|\cX})$ is a non-zero divisor in $\ch(\cZ)$, then $\zeta$ is an isomorphism. 
 \end{lemma}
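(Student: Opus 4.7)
The plan is to prove both assertions via a direct diagram chase in the localization exact sequence
$$\ch(\cZ)\xrightarrow{i_*}\ch(\cX)\xrightarrow{j^*}\ch(\cU)\longrightarrow 0,$$
combined with the self-intersection formula $i^*i_*(\alpha)=c_d(N_{\cZ|\cX})\cdot\alpha$, both of which are available in the equivariant Chow theory of smooth global quotient stacks developed in \cite{EdGra}. A preliminary check that one has to run before anything else is that the induced map $i^*\colon\ch(\cU)\to\ch(\cZ)/(c_d(N_{\cZ|\cX}))$ is well-defined: if $x_1,x_2\in\ch(\cX)$ have the same restriction to $\cU$, exactness gives $x_1-x_2=i_*(c)$, and then $i^*(x_1)-i^*(x_2)=c_d(N_{\cZ|\cX})\cdot c$, which vanishes in the quotient.

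For the kernel, suppose $\zeta(x)=0$. Then $j^*(x)=0$ forces $x=i_*(y)$ for some $y\in\ch(\cZ)$ by exactness, and the vanishing of $i^*(x)=i^*i_*(y)=c_d(N_{\cZ|\cX})\cdot y$ puts $y$ in $\mathrm{Ann}(c_d(N_{\cZ|\cX}))$. Conversely every element of $i_*\mathrm{Ann}(c_d(N_{\cZ|\cX}))$ lies in $\ker\zeta$ by the same formula together with $j^*i_*=0$. Hence $\ker\zeta=i_*\mathrm{Ann}(c_d(N_{\cZ|\cX}))$, and when $c_d(N_{\cZ|\cX})$ is a non-zero-divisor the annihilator is trivial, so $\zeta$ is injective.

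For surjectivity, let $(a,b)$ be an element of the fiber product, so that $q(a)$ agrees with the image of $b$ in $\ch(\cZ)/(c_d(N_{\cZ|\cX}))$. Using surjectivity of $j^*$, choose $x'\in\ch(\cX)$ with $j^*(x')=b$; the compatibility forces $a-i^*(x')=c_d(N_{\cZ|\cX})\cdot c$ for some $c\in\ch(\cZ)$. Setting $x:=x'+i_*(c)$, the self-intersection formula yields $i^*(x)=i^*(x')+c_d(N_{\cZ|\cX})\cdot c=a$, while $j^*(x)=b$ because $j^*i_*=0$. Thus $\zeta(x)=(a,b)$, and combined with the kernel computation the final "in particular" statement is immediate. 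There is no real obstacle: the argument is formal once one has the localization exact sequence and the self-intersection formula at one's disposal, which the smoothness and quotient-stack hypotheses are tailored to guarantee.
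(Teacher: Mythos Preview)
Your argument is correct and is exactly the standard diagram chase one expects: well-definedness of the induced map via the self-intersection formula, the kernel computation from exactness plus $i^*i_*=c_d(N_{\cZ|\cX})\cdot(-)$, and surjectivity by lifting along $j^*$ and then correcting by a pushforward from $\cZ$. The paper itself does not prove this lemma; it merely recalls the statement from the earlier paper \cite{Per3} in the series (where the same formal argument is given), so there is no substantive difference in approach to report.
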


From now on, we refer to the condition \emph{$c_d(N_{\cZ|\cX})$ is not a zero divisor} as the gluing condition.

We can apply \Cref{lem:gluing} to the following stratification of $\Mtilde_3^7$:
$$
\begin{tikzcd}
	&                            & \Htilde_3^7 \arrow[rd] &             \\
	{\Detilde_{1,1,1}} \arrow[r] & {\Detilde_{1,1}} \arrow[r] & \Detilde_1 \arrow[r]   & \Mtilde_3^7,
\end{tikzcd}
$$

where $\Detilde_1$ (respectively $\Detilde_{1,1}$, $\Detilde_{1,1,1}$) is the moduli stack parametrizing $A_7$-stable curves of genus $3$ with a least one (respectively two, three) separating nodes. The diagram above represents the poset associated to the stratification.  More precisely,our approach focuses firstly on the computation of the Chow ring of $\Mtilde_3^7 \setminus \Detilde_1$. We compute the Chow ring of $\Htilde_3^7 \setminus \Detilde_1$, then we apply the gluing lemma to $\Mtilde_3^7 \setminus (\Htilde_3^7 \cup \Detilde_1)$ and $\Htilde_3^7 \setminus \Detilde_1$ to get a description for the Chow ring of $\Mtilde_3^7 \setminus \Detilde_1$. Furthermore, we apply \Cref{lem:gluing} to $\Detilde_1 \setminus \Detilde_{1,1}$ and $\Mtilde_3^7\setminus \Detilde_1$ to describe the Chow ring of $\Mtilde_3^7 \setminus \Detilde_{1,1}$, and then apply it again to $\Mtilde_3^7 \setminus \Detilde_{1,1}$ and $\Detilde_{1,1} \setminus\Detilde_{1,1,1}$. The same procedure allows us to glue also $\Detilde_{1,1,1}$ and get the description of the Chow ring of $\Mtilde_3^7$.
 
For a more precise description of the strata and for the computations needed, see \cite{Per3}. We recall the main theorem of \cite{Per3}, which describes $\ch(\Mtilde_3^7)$ as a $\ZZ[1/6]$-algebra of finite type. 

\begin{theorem}\label{theo:main}
	We have the following isomorphism 
	$$ \ch(\Mtilde_3)\simeq \ZZ[1/6][\lambda_1,\lambda_2,\lambda_3,H,\delta_1,\delta_{1,1},\delta_{1,1,1}]/I$$
	where $I$ is generated by the following relations:
	\begin{itemize}
		\item $k_h$, which comes from the generator of $\ker i_H^*$, where $i_H: \Htilde_3\setminus \Detilde_1 \into \Mtilde_3\setminus \Detilde_1$;
		\item $k_{1}(1)$ and $k_1(2)$, which come from the two generators of $\ker i_{1}^*$ where $i_{1}: \Detilde_1\setminus \Detilde_{1,1} \into \Mtilde_3\setminus \Detilde_{1,1}$;
		\item $k_{1,1}(1)$, $k_{1,1}(2)$ and $k_{1,1}(3)$, which come from the three generators of $\ker i_{1,1}^*$ where $i_{1,1}: \Detilde_{1,1}\setminus \Detilde_{1,1,1} \into \Mtilde_3\setminus \Detilde_{1,1,1}$;
		\item $k_{1,1,1}(1)$, $k_{1,1,1}(2)$, $k_{1,1,1}(3)$ and $k_{1,1,1}(4)$, which come from the four generators of $\ker i_{1,1,1}^*$ where $i_{1,1,1}: \Detilde_{1,1,1} \into \Mtilde_3$;
		\item $m(1)$, $m(2)$, $m(3)$ and $r$, which are the litings of the generators of the relations of the open stratum $\Mtilde_3\setminus (\Htilde_3 \cup \Detilde_1)$;
		\item $h(1)$, $h(2)$ and $h(3)$, which are the liftings of the generators of the relations of the stratum $\Htilde_3 \setminus \Detilde_1$; 
		\item $d_1(1)$, which is the lifting of the generator of the relations of the stratum $\Detilde_1\setminus \Detilde_{1,1}$.
	\end{itemize}
	Furthemore, $h(2)$, $h(3)$ and $d_1(1)$ are in the ideal generated by the other relations. 
\end{theorem}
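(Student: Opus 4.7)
The plan is to apply the gluing lemma \Cref{lem:gluing} iteratively along the stratification
$$ \Detilde_{1,1,1} \subset \Detilde_{1,1} \subset \Detilde_1, \qquad \Htilde_3^7 \setminus \Detilde_1 \subset \Mtilde_3^7 \setminus \Detilde_1,$$
peeling off one closed stratum at a time until the whole of $\Mtilde_3^7$ is covered. At each step one needs three inputs: a description of the Chow ring of the stratum being added, a generating set for the kernel of its pushforward (which becomes the relations labelled $k_{\bullet}$), and a verification of the gluing condition (that the top Chern class of the normal bundle is a non-zero-divisor on the stratum). The classes $\lambda_1,\lambda_2,\lambda_3$ arise from the Hodge bundle $\HH_3$, while $H,\delta_1,\delta_{1,1},\delta_{1,1,1}$ are introduced as pushforwards of the fundamental classes of $\Htilde_3^7,\Detilde_1,\Detilde_{1,1},\Detilde_{1,1,1}$ respectively.

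I would carry out the computation in the following order. \emph{Step 1:} Compute $\ch(\Mtilde_3^7 \setminus (\Htilde_3^7\cup\Detilde_1))$, whose objects are non-hyperelliptic $A_7$-stable curves with no separating nodes; this stratum yields the relations $m(1), m(2), m(3), r$. \emph{Step 2:} Compute $\ch(\Htilde_3^7\setminus\Detilde_1)$ by means of the cyclic-cover description $\cC(2,3,7)$ of \Cref{prop:descr-hyper}, obtaining $h(1),h(2),h(3)$. Glue these two using \Cref{lem:gluing} to obtain $\ch(\Mtilde_3^7\setminus\Detilde_1)$, picking up the kernel generator $k_h$. \emph{Step 3:} Describe $\Detilde_1\setminus\Detilde_{1,1}$ using the contraction morphism \Cref{theo:contrac} as a quotient involving the pointed moduli stacks $\Mtilde_{1,1}^7$ and $\Mtilde_{2,1}^7$, compute its Chow ring, and glue with $\ch(\Mtilde_3^7\setminus\Detilde_1)$ along the restriction $\ch(\Mtilde_3^7\setminus\Detilde_1)/(c_1(N))$. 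This introduces the lifted relation $d_1(1)$ and the kernel generators $k_1(1),k_1(2)$. \emph{Step 4 and 5:} Repeat with $\Detilde_{1,1}\setminus\Detilde_{1,1,1}$ and then $\Detilde_{1,1,1}$, each described via boundary gluing of lower-genus pointed strata, producing $k_{1,1}(i)$ and $k_{1,1,1}(i)$. \emph{Step 6:} Verify, by explicit polynomial manipulation, that $h(2), h(3)$ and $d_1(1)$ already lie in the ideal generated by the other 12 relations, and thus may be discarded from a minimal list.

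The principal obstacle is verifying the gluing condition at each of the four inclusions. This requires that the top Chern class of the normal bundle of each stratum be a non-zero-divisor in the Chow ring of that stratum, which is precisely the reason for enlarging $\Mbar_3$ to $\Mtilde_3^7$: on the Deligne--Mumford stack $\Mbar_3$ the Chow groups above the dimension are torsion and the condition must fail, whereas the positive-dimensional affine stabilizers forced by the $A_r$-singularities create enough room in the Chow rings of the boundary strata for the normal-bundle classes to avoid zero-divisors. A secondary difficulty, essentially bookkeeping but not trivial, is the explicit identification of the kernel generators $k_{\bullet}$ and the compatibility of all the lifted relations across the fibre-product diagrams of \Cref{lem:gluing}; this is the heart of the computations carried out in \cite{Per3}.
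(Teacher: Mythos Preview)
Your proposal is correct and follows essentially the same route as the paper: this theorem is not proved here but recalled from \cite{Per3}, and the strategy you outline---iteratively applying \Cref{lem:gluing} along the stratification $\Detilde_{1,1,1}\subset\Detilde_{1,1}\subset\Detilde_1$ together with the hyperelliptic stratum $\Htilde_3\setminus\Detilde_1$, collecting kernel generators $k_{\bullet}$ and lifted relations at each stage---is exactly the approach described informally in this paper just before the statement and carried out in detail in \cite{Per3}. One small notational slip: the boundary strata involve $\Mtilde_{1,1}$ and $\Mtilde_{2,1}$ in the paper's convention (the maximal $r$ depends on the genus), not $\Mtilde_{1,1}^7$ and $\Mtilde_{2,1}^7$.
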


We have the following geometric description of the generators: 
\begin{itemize}
 \item[$\bullet$] $\lambda_i$ is the $i$-th Chern class of the Hodge bundle $\widetilde{\HH}$ of $\Mtilde_3^7$ for $i=1,2,3$;
 \item[$\bullet$] $H$ is the fundamental class of the hyperelliptic locus $\Htilde_3^7$ in $\Mtilde_3^7$;
 \item[$\bullet$] $\delta_1$ (respectively $\delta_{1,1}$, $\delta_{1,1,1}$) is the fundamental class of the closed substack $\Detilde_1$ (respectively $\Detilde_{1,1}$, $\Detilde_{1,1,1}$).
\end{itemize}

For an explicit description of the relations, see Remark 7.4 of \cite{Per3}.

\subsection{Strategy of the computation}

The final goal of this paper is to compute the Chow ring of $\Mbar_3$.  Since we have that $\Mbar_3$ is an open inside the $\Mtilde_3^7$, we can use the localization exact sequence
$$ \ch(\Mtilde_3^7\setminus \Mbar_3) \rightarrow \ch(\Mtilde_3^7) \rightarrow \ch(\Mbar_3) \rightarrow 0$$ 
to compute the Chow ring of $\Mbar_3$ as the quotient of the Chow ring of $\Mtilde_3^7$ by the ideal generated by the image of the pushforward of the closed immersion $\Mtilde_3^7\setminus \Mbar_3 \hookrightarrow \Mtilde_3^7$. The paper is dedicated to computing this specific ideal.

We recall the notation used in Section 4 of \cite{Per1}. Let $g\geq 2$ and $r\geq 1$ be two integers and $\kappa$ be the base field of characteristic either 0 or  greater than $2g+1$. We recall the sequence of open subset (see \Cref{rem: max-sing})
$$ \Mtilde_g^0 \subset \Mtilde_g^1 \subset \dots \subset \Mtilde_g^r$$ 
and we define $\widetilde{\cA}_{\geq n}:=\Mtilde_g^r\setminus \Mtilde_g^{n-1}$ for $n=0,\dots,r+1$ setting $\Mtilde_g^{-1}:=\emptyset$. By construction, $\Mbar_g$ is equal to the open $\Mtilde_g^1=\Mtilde_g^r \setminus \widetilde{\cA}_{\geq 2}$. In our setting, we need to describe the ideal generated by the image of the pushforward of the closed embedding $\widetilde{\cA}_{\geq 2} \hookrightarrow \Mtilde_3^7$.

We now introduce an alternative to $\widetilde{\cA}_{\geq n}$ which is easier to describe. Suppose $n$ is a positive integer smaller or equal than $r$ and let $\cA_{\geq n}$ be the substack of the universal curve $\Ctilde_g^r$ of $\Mtilde_g^r$ parametrizing pairs $(C/S,p)$ where $p$ is a section whose geometric fibers over $S$ are $A_r$-singularities for $r\geq n$. We give to $\cA_{\geq n}$ the structure of closed substack of $\Ctilde_g^r$ inductively on $n$. Clearly if $n=0$ we have $\cA_{\geq 0}=\Ctilde_g^r$. To define $\cA_{\geq 1}$, we need to find the stack-theoretic structure of the singular locus of the natural morphism $\Ctilde_g^r \rightarrow \Mtilde_g^r$. This is standard and it can be done by taking the zero locus of the $1$-st Fitting ideal of $\Omega_{\Ctilde_g^r|\Mtilde_g^r}$. We have that $\cA_{\geq 1}\rightarrow  \Mtilde_g^r$ is now finite and it is unramified over the nodes, while it ramifies over the more complicated singularities. Therefore, we can denote by $\cA_{\geq 2}$ the substack of $\cA_{\geq 1}$ defined by the $0$-th Fitting ideal of $\Omega_{\cA_{\geq 1}|\Mtilde_g^r}$. A local computation shows us that $\cA_{\geq 2} \rightarrow \Mtilde_g^r$ is unramified over the locus of $A_2$-singularities and ramified elsewhere. Inductively, we can iterate this procedure considering the $0$-th Fitting ideal of $\Omega_{\cA_{\geq n-1}|\Mtilde_g^r}$ to define $\cA_{\geq n}$. 

A local computation shows that the geometric points of $\cA_{\geq n}$ are exactly the pairs $(C,p)$ such that $p$ is an $A_{n'}$-singularity for $n\leq n'\leq r$.

Let us define $\cA_n:=\cA_{\geq n}\setminus \cA_{\geq n+1}$ for  $n=0,\dots,r-1$. We have a stratification of $\cA_{\geq 2}$
$$ \cA_{r}=\cA_{\geq r} \subset \cA_{\geq r-1} \subset \dots \subset \cA_{\geq 2}$$ 
where the $\cA_n$'s are the associated locally closed strata for $n=2,\dots, r$. 

The first reason we choose to work with $\cA_{\geq n}$ instead of $\widetilde{\cA}_{\geq n}$ is the smoothness of the locally closed substack $\cA_n$ of $\Ctilde_g^r$.

\begin{proposition}
	The stack $\cA_n$ is smooth.
\end{proposition}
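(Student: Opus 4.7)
The plan is to work étale-locally on $\Ctilde_g^r$ around an arbitrary geometric point $\xi=(C_0,p_0)\in\cA_n$. Using the smoothness of $\Mtilde_g^r$ (\Cref{theo:descr-quot}), Weierstrass preparation at the $A_n$-singularity $p_0$, and a Tschirnhaus substitution to kill the $x^n$-coefficient (legitimate because $\cha\kappa\nmid 2(n+1)$), I would first exhibit an étale neighborhood $V\to\Ctilde_g^r$ of $\xi$ together with an étale map
\[
V\longrightarrow \spec\kappa[\mathbf{s},x,y]/\bigl(y^2-g(x,\mathbf{s})\bigr),\qquad g(x,\mathbf{s})=x^{n+1}+\sum_{i=0}^{n-1}a_i(\mathbf{s})\,x^i,
\]
with $\spec\kappa[\mathbf{s}]$ an étale chart of $\Mtilde_g^r$ at $[C_0]$ and $\xi$ corresponding to $(\mathbf{s}_0,0,0)$ with $a_i(\mathbf{s}_0)=0$ for all $i$. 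Because smooth proper curves are unobstructed, the restriction map from the global deformation functor of $C_0$ to the miniversal deformation of its complete local ring at $p_0$ is formally smooth; concretely, the classifying map $\mathbf{s}\mapsto(a_0(\mathbf{s}),\dots,a_{n-1}(\mathbf{s}))$ to the $n$-dimensional miniversal deformation of the $A_n$-singularity is smooth at $\mathbf{s}_0$, so $a_0,\dots,a_{n-1}$ extend to a regular system of parameters on $\spec\kappa[\mathbf{s}]$ there.

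The second step is an inductive computation of the Fitting ideals defining $\cA_{\geq k}$ in this local model, culminating in the identification of the ideal of $\cA_{\geq n}$ in $V$ as $(x,y,a_0,a_1,\dots,a_{n-1})$. For the base case, $\Omega_{\Ctilde_g^r/\Mtilde_g^r}$ is presented by the row $(2y,-g'(x,\mathbf{s}))$, so its first Fitting ideal is $(y,g'(x,\mathbf{s}))$ and $\cA_{\geq 1}=V(y,g'(x,\mathbf{s}))$. At each subsequent step one adjoins the vanishing of the next derivative $g^{(k)}(x,\mathbf{s})$; since $g^{(k)}(0,\mathbf{s})=k!\,a_k(\mathbf{s})$ with $k!$ invertible in our range, localizing at $\xi$ collapses the chain of iterated Fitting ideals to the claimed clean form.

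Combining these two steps, $\cA_{\geq n}$ is cut out in $V$ by a subset of a regular system of parameters on $\spec\kappa[\mathbf{s},x,y]$ containing the defining equation $y^2-g(x,\mathbf{s})$ of $\Ctilde_g^r$; hence $\cA_{\geq n}$ is regular at $\xi$, of codimension $n+1$ in $\Ctilde_g^r$. Since $\xi$ was an arbitrary geometric point of $\cA_n\subset\cA_{\geq n}$, this yields smoothness of $\cA_n$.

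The main obstacle I anticipate is the Fitting ideal bookkeeping of the second step: verifying that the iterated construction produces precisely the clean reduced ideal $(x,y,a_0,\dots,a_{n-1})$ and not some nilpotent thickening requires care, especially in showing inductively that at stage $k$ the newly adjoined Fitting generator reduces, modulo the previous relations, to a unit multiple of $a_k$. Everything else is standard once the étale-local normal form and the versality statement are in hand.
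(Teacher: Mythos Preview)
The paper does not give an in-paper proof; it cites \cite{Per1}, Proposition~4.1. Your deformation-theoretic approach via the miniversal deformation of the $A_n$-singularity and the iterated Fitting-ideal construction is the standard one and is correct.

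On the concern you raise about the Fitting bookkeeping: in your normal form one has $g^{(n)}(x,\mathbf{s})=(n+1)!\,x$ \emph{exactly}, since the $x^n$-term has been killed by Tschirnhaus and all lower-degree terms differentiate to zero after $n$ derivatives. Hence $x$ lies in the ideal of $\cA_{\geq n}$ outright, not merely up to higher order. Modulo $x$ one then has $g^{(k)}\equiv k!\,a_k$ for $0\le k\le n-1$, with $k!$ invertible, so the chain of Fitting ideals yields precisely $(y,x,a_0,\dots,a_{n-1})$ in $\kappa[\mathbf{s},x,y]$, with no nilpotent thickening and no need to localize. Since $y^2-g$ already lies in this ideal (as $g\in(x,a_0,\dots,a_{n-1})$), the scheme $\cA_{\geq n}\cap V$ is isomorphic to $\spec\kappa[\mathbf{s}]/(a_0,\dots,a_{n-1})$, smooth at $\mathbf{s}_0$ by the versality step.
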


See Proposition 4.1 of \cite{Per1} for the proof.

The second reason is that we have an explicit description of $\cA_n$ that allows us to compute their Chow rings. In particular we have the following results when $n$ is an even positive integer.

\begin{proposition}\label{prop:descr-an-pari}
	$\cA_{2m}$ is an affine bundle of dimension $m-1$ over the stack $\Mtilde^r_{g-m,[2m]} $ for $m\geq 1$.
\end{proposition}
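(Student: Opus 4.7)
The plan is to construct the affine bundle structure explicitly via partial normalization. Given a family $(C/S, p) \in \cA_{2m}(S)$, let $b: \widetilde{C} \to C$ denote the partial normalization of $C$ along the section $p$. By \Cref{rem:genus-count}(1) applied with $h=m$, every geometric fiber $\widetilde{C}_s$ has arithmetic genus $g-m$, and $b^{-1}(p)$ is a single smooth section $\widetilde{p}$. The conductor ideal $J_b$ is generated \'etale-locally by $x^m$ in the local model $k[[x,y]]/(y^2 - x^{2m+1})$, and pulls back to $(t^{2m})$ in terms of a local uniformizer $t$ at $\widetilde{p}$. Sending $(C,p)$ to $(\widetilde{C}, \widetilde{p})$ together with this conductor datum defines a morphism
$$\Phi: \cA_{2m} \longrightarrow \Mtilde^r_{g-m,[2m]},$$
where the decoration $[2m]$ is precisely the length-$2m$ structure at $\widetilde{p}$ prescribed by the conductor.

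First, I would check that $\Phi$ lands in $\Mtilde^r_{g-m,[2m]}$: the local model shows that $(\widetilde{C}, \widetilde{p})$ is $A_r$-prestable of genus $g-m$, and stability (ampleness of $\omega_{\widetilde{C}}(\widetilde{p})$) follows from ampleness of $\omega_C$ via the Noether-type formula $b^*\omega_C \simeq \omega_{\widetilde{C}}(J_b^{\vee})$ recalled in \Cref{rem:genus-count}. Functoriality in $S$ is immediate from the \'etale-local description, and smoothness of $\cA_{2m}$ from the preceding proposition ensures that $\Phi$ is a morphism of smooth stacks.

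Next, I would analyze the fibers of $\Phi$. Over a geometric point $(\widetilde{C}, \widetilde{p})$, reconstructing $(C,p)$ amounts to choosing a subring $R \subset \widehat{\cO}_{\widetilde{C}, \widetilde{p}} \simeq k[[t]]$ containing the conductor $(t^{2m})$ and abstractly isomorphic to $k[[x,y]]/(y^2 - x^{2m+1})$. Such an $R$ is generated over $k[[t^{2m}]]$ by a single element $x_0 \equiv t^2 + \sum_{j=3}^{2m-1} a_j t^j \pmod{t^{2m}}$; the coefficients at the \emph{even} exponents $4, 6, \dots, 2m-2$ are then determined by the ring structure (namely as the corresponding coefficients of $x_0^2, x_0^3, \dots$), leaving precisely the $m-1$ \emph{odd}-exponent coefficients $a_3, a_5, \dots, a_{2m-1}$ as free parameters. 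Thus, once the decoration $[2m]$ pins down the uniformizer to the requisite order, the fiber is canonically identified with $\AA^{m-1}$.

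Finally, I would globalize. The construction of $\Phi$ and the fibral parametrization above are both functorial and \'etale-local on the base, and so patch together to endow $\cA_{2m}$ with the structure of an affine bundle of rank $m-1$ over $\Mtilde^r_{g-m,[2m]}$. The main obstacle is checking that the fibers assemble intrinsically as an affine space rather than a torsor for a nontrivial line bundle: this is exactly what the decoration $[2m]$ on the target is designed to do, absorbing the residual automorphisms of the local uniformizer that would otherwise twist the affine structure. The verification reduces to the explicit normal form computation above, together with the functoriality of the Fitting-ideal construction that defined $\cA_{\geq 2m}$ in the first place.
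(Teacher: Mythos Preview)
Your approach is the right one and is essentially what the paper (via the cited \cite{Per1}) does: partially normalize at the $A_{2m}$-point to land in $\Mtilde^r_{g-m,[2m]}$, and then identify the fiber with the space of ways to ``re-pinch'' an $A_{2m}$-singularity onto the smooth point. The paper packages this cleanly using the formalism recalled in \Cref{not:algebras}: one shows that
\[
\begin{tikzcd}
\cA_{2m} \arrow[r] \arrow[d] & \cE^c_{m,2} \arrow[d] \\
\Mtilde^r_{g-m,[2m]} \arrow[r] & \cF^c_{2m}
\end{tikzcd}
\]
is cartesian, where the bottom map sends $(\widetilde{C},\widetilde{p})$ to the length-$2m$ curvilinear algebra $\cO_{\widetilde{C}}/\frkm_{\widetilde{p}}^{2m}$, and then invokes that $\cE^c_{m,2}\to\cF^c_{2m}$ is an affine bundle of relative dimension $(m-1)(2-1)=m-1$ (Lemma~3.8 of \cite{Per1}). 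Your explicit normal form $x_0\equiv t^2+\sum a_j t^j$ with the odd coefficients free is exactly the description of $V_{m,2}$ given there.

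That said, two points in your write-up are not correct as stated. First, $R$ is \emph{not} generated over $k[[t^{2m}]]$ by the single element $x_0$: the subring so generated consists only of power series with even leading order, whereas $R$ contains all of the conductor $(t^{2m})$, in particular $t^{2m+1}$. The correct statement is that $R=k[[x_0]]+(t^{2m})$, or equivalently that the image of $R$ in $k[t]/(t^{2m})$ is the subalgebra generated by $x_0$. Second, the even-exponent coefficients $a_4,a_6,\dots$ are not ``determined by the ring structure''; rather, they are gauged to zero by the action of $G_m=\aut(k[s]/(s^m))$ reparametrizing the source of the extension $k[s]/(s^m)\hookrightarrow k[t]/(t^{2m})$. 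This is precisely the content of the isomorphism $[E_{m,2}/G_m]\simeq V_{m,2}$ in \Cref{not:algebras}.

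Finally, your globalization paragraph is where the argument is genuinely incomplete. The bracket $[2m]$ in $\Mtilde^r_{g-m,[2m]}$ does not record a jet or trivialization that ``absorbs'' automorphisms of the uniformizer; those automorphisms (the group $G_{2m}$) still act on the fiber $V_{m,2}$, and one must check that this action is by affine transformations so that the fibers patch to an affine bundle rather than something worse. This is exactly what the cartesian diagram above accomplishes: once $\cE^c_{m,2}\to\cF^c_{2m}$ is known to be an affine bundle, so is any base change of it. I would recommend replacing your last paragraph with this argument.
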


For the proof of the previous result, see Corollary 4.12 of \cite{Per1}. Moreover, the stack $\Mtilde_{g-m,[2m]}^r$ is described in Proposition 4.9 of \cite{Per1}. 

When $n$ is odd, things are a bit more complicated. In fact, $\cA_{2m-1}$ is the disjoint union of several connected components, namely $\cA_{2m-1}^{\rm ns}$ and $\cA_{2m-1}^i$ for $0\leq i\leq (g-m+1)/2$. The stack $\cA_{2m-1}^{\rm ns}$ parametrizes pairs $(C,p)$ such that the partial normalization of $C$ at $p$ is connected, whereas $\cA_{2m-1}^{\rm ns}$ parametrizes pairs $(C,p)$ such that the partial normalization of $C$ at $p$ is the disjoint union of two components of genus $i$ and $g-m-i+1$. For a more detailed treatment of the subject, see Subsection 4.1 in \cite{Per1}. 

We have the following descriptions in the odd case. 

\begin{proposition}\label{prop:descr-an-disp-ns}
 We have a finite \'etale cover of degree $2$
 $$ F^{\rm ns}: I_{2m-1}^{\rm ns} \longrightarrow \cA_{2m-1}^{\rm ns}$$ 
 where $I_{2m-1}^{\rm ns}$ is a $\gm \ltimes U$-torsor over $\Mtilde_{g-m,2[m]}$ and $U$ is an algebraic group whose underlying scheme is isomorphic to $\AA^{m-2}$.  
\end{proposition}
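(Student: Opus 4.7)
The natural plan is to realize $I_{2m-1}^{\rm ns}$ as the moduli stack parametrizing data $(C/S, p, \sigma)$, where $(C/S, p) \in \cA_{2m-1}^{\rm ns}(S)$ and $\sigma$ is an ordering of the two preimages $p_1, p_2$ of $p$ under the partial normalization $b\colon \widetilde{C} \to C$ at $p$. Since we are in the non-separating case, the two preimages are geometrically distinct points of $\widetilde C$ and the forgetful morphism $F^{\rm ns}\colon I_{2m-1}^{\rm ns} \to \cA_{2m-1}^{\rm ns}$ is representable, finite and \'etale of degree $2$, with non-trivial involution given by swapping the branches.

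I would next construct the morphism $\pi\colon I_{2m-1}^{\rm ns} \to \Mtilde_{g-m, 2[m]}$ by associating to $(C, p, \sigma)$ the normalization $(\widetilde{C}, p_1, p_2)$ equipped with the canonical order-$m$ jet datum at each marked point induced by its embedding into the singular curve $C$. The genus bookkeeping follows from \Cref{rem:genus-count}: in the non-separating case with $r = 2m - 1$ one has $g(\widetilde C) = g - m$, and the formula $b^*\omega_C \simeq \omega_{\widetilde C}(m(p_1 + p_2))$ ensures that the $A_r$-stability of $(C,p)$ translates into $A_r$-stability of $(\widetilde C, p_1, p_2)$ in $\Mtilde_{g-m, 2[m]}$.

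The technical heart of the argument is to show that $\pi$ is a torsor under $\gm \ltimes U$ with $U$ unipotent of dimension $m-2$. The strategy is to analyze the gluing problem via the local model
\[
\widehat{\cO}_{C,p} \simeq k[[x,y]]/(y^2 - x^{2m}) = k[[x,y]]/\bigl((y - x^m)(y + x^m)\bigr),
\]
whose partial normalization is $k[[t_1]] \times k[[t_2]]$. After fixing the two ordered branches together with their $m$-jet data, the set of subalgebras of $k[[t_1]]\times k[[t_2]]$ producing an $A_{2m-1}$-singularity is parametrized by a power-series identification between the two branches. The group of reparametrizations preserving the ordered branches that acts simply transitively on these choices splits as $\gm \ltimes U$: the $\gm$-factor rescales the leading-order matching of the two branches, while $U \simeq \AA^{m-2}$ records the higher-order corrections that still yield an $A_{2m-1}$-singularity (rather than a more degenerate type). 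One then patches this local picture to families by realizing $I_{2m-1}^{\rm ns}$ as the stack of \emph{triples} consisting of a point of $\Mtilde_{g-m, 2[m]}$ together with such a gluing, and verifying simultaneous transitivity of the $\gm \ltimes U$-action.

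The main obstacle will be the last step: performing the local deformation/gluing computation carefully enough to isolate the group $\gm \ltimes U$, and then checking that the resulting description is uniform in families. I expect this to follow the same template as the even case treated in Proposition~\ref{prop:descr-an-pari} (Corollary~4.12 of \cite{Per1}), where a closely related affine bundle structure appears, with the additional subtlety here that the gluing now involves two branches and hence an intrinsic $\mathbb{Z}/2$-symmetry, which is precisely what is absorbed by the degree-$2$ \'etale cover $F^{\rm ns}$.
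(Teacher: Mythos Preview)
The paper does not give a self-contained proof here; it refers to Proposition~4.17 of \cite{Per1}. From the surrounding context (in particular \Cref{not:algebras} and the proofs of Propositions~\ref{prop:rho-7-surj}, \ref{prop:rho-5-0-surj}, \ref{prop: rho-3-1-surj}, \ref{prop:rho-3-surj}), the cited argument proceeds via the formalism of pointed curvilinear algebras: $I_{2m-1}^{\rm ns}$ is realized as the fiber product of $\Mtilde_{g-m,2[m]}$ with $[E_{m,1}/G_m^{\times 2}]$ over $\cB(G_m^{\times 2})$, where the map from $\Mtilde_{g-m,2[m]}$ records the length-$m$ curvilinear neighborhoods at the two marked points. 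Since $E_{m,1}$ is the automorphism scheme of $(k[t]/(t^m),q_0)$, i.e.\ $G_m$ itself, the torsor structure under $G_m \simeq \gm \ltimes U$ with $\dim U = m-2$ is then immediate from the results recalled in \Cref{not:algebras}.

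Your outline is correct and amounts to the same construction, carried out by hand rather than through this machinery. The degree-$2$ cover by ordering the branches, the genus computation via \Cref{rem:genus-count}, and the conductor formula $b^*\omega_C \simeq \omega_{\widetilde C}(m(p_1+p_2))$ are all right. The one place where your sketch is vague is precisely the step you flag as the ``main obstacle'': identifying the torsor group. In the framework of \cite{Per1} this is automatic, because the gluing datum at an ordered $A_{2m-1}$-point is literally an isomorphism between the two length-$m$ infinitesimal neighborhoods, and $\aut(k[t]/(t^m), q_0) = G_m$ by Corollary~3.5 of \cite{Per1}. If you carry out your local computation in $k[[t_1]]\times k[[t_2]]$ carefully you will rediscover exactly this; the advantage of the curvilinear-algebra formalism is that it packages both the passage to families and the dimension count once and for all, so that the analogue of your ``last step'' never has to be done explicitly.
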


For the proof of the previous result, see Proposition 4.17 of \cite{Per1}. Moreover, the stack $\Mtilde_{g-m,2[m]}$ is described in Proposition 4.15 of \cite{Per1}.

A similar statement is true for the other connected components. For the proof of the following result, see Proposition 4.18 of \cite{Per1}.

\begin{proposition}\label{prop:descr-an-disp-i}
 We have morphism 
 $$ F^i:I_{2m-1}^{i} \longrightarrow \cA_{2m-1}^{ i}$$ 
 where $I_{2m-1}^{i}$ is a $\gm \ltimes U$-torsor over $\Mtilde_{i,[m]} \times \Mtilde_{g-i-m+1,[m]}$ and $U$ is an algebraic group whose underlying scheme is isomorphic to $\AA^{m-2}$. If $2i=g-m+1$, then $F^i$ is finite \'etale of degree $2$; otherwise it is an isomorphism. 
\end{proposition}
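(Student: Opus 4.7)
The approach is to realize $I_{2m-1}^i$ as an explicit moduli stack of ``decorated normalization data'' and to define $F^i$ by the pinching (pushout) construction. The starting point is the local analysis of a separating $A_{2m-1}$ singularity: over a geometric point, such a germ is $\spec k[[x,y]]/(y^2-x^{2m})$, which, since $\cha \kappa \neq 2$, factors as $(y-x^m)(y+x^m)$ and so has two smooth branches. If $b:\widetilde C \to C$ is the partial normalization at a separating $A_{2m-1}$ point on a genus $g$ curve, then $\widetilde C$ is the disjoint union of two smooth pointed curves $(C_1,p_1)\sqcup (C_2,p_2)$ with $g(C_1)+g(C_2)=g-m$ by Remark~\ref{rem:genus-count}, and on the component $\cA_{2m-1}^i$ the two genera are $i$ and $g-i-m+1$. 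A direct computation identifies the conductor of $b$ with $\frkm_{p_1}^m\oplus \frkm_{p_2}^m$ and the quotient $\cO_{C,p}/\mathfrak{c}$ with $k[[x]]/(x^m)$, so reconstructing $\cO_{C,p}$ from $\cO_{\widetilde C}$ amounts to choosing a $k$-algebra isomorphism $\varphi:\cO_{C_1,p_1}/\frkm_{p_1}^m \xrightarrow{\sim} \cO_{C_2,p_2}/\frkm_{p_2}^m$.

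Accordingly, I would define $I_{2m-1}^i$ as the stack parametrizing tuples $((C_1,p_1),(C_2,p_2),\varphi)$ with $(C_j,p_j)$ an object of the appropriate $\Mtilde_{\bullet,[m]}$ (the stack of Proposition~4.15 of \cite{Per1}, encoding a pointed $A_r$-stable curve together with a length-$m$ marking at the point), and define $F^i$ by pinching: glue $C_1$ and $C_2$ along their $m$-th infinitesimal neighborhoods of $p_1$ and $p_2$ via $\varphi$, producing a curve with an $A_{2m-1}$ singularity at the common image of $p_1=p_2$. The $k$-algebra automorphism group of $k[[x]]/(x^m)$ consists of substitutions $x\mapsto a_1 x+a_2 x^2+\cdots+a_{m-1}x^{m-1}$ with $a_1\in\gm$; the leading coefficient map gives a surjection onto $\gm$ whose kernel is the unipotent subgroup $U\cong \AA^{m-2}$, and since composition mixes $a_1$ with the remaining $a_j$, the full group is the semidirect product $\gm \ltimes U$. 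The set of $\varphi$'s is a principal homogeneous space under this group, once a reference identification has been chosen from the $[m]$-marking data, which equips $I_{2m-1}^i$ with the claimed torsor structure over $\Mtilde_{i,[m]}\times \Mtilde_{g-i-m+1,[m]}$.

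The remaining ambiguity between an object of $I_{2m-1}^i$ and its image $(C,p)\in \cA_{2m-1}^i$ is the labeling of the two branches of $b$. When $2i\neq g-m+1$ the two branches are distinguished by their genera and the labeling is forced, so $F^i$ is an isomorphism; when $2i=g-m+1$ the involution swapping the two pointed curves (and simultaneously the two sides of $\varphi$) acts freely, and $F^i$ is the quotient, finite étale of degree $2$. The main technical obstacle will be promoting this fiberwise picture to a morphism of algebraic stacks over arbitrary bases: one must verify that the conductor sheaf has the expected structure globally in flat families, that the pinching construction is flat and produces precisely $A_{2m-1}$ singularities (rather than degenerating into $\cA_{\geq 2m}$), and that the torsor trivializes étale-locally rather than only on geometric points. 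These verifications can be carried out by formal-neighborhood analysis combined with Artin approximation, making use of the structural results on $\Mtilde_{g,[m]}$ established in \cite{Per1}.
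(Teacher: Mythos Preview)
Your approach is correct and aligns with the construction in \cite{Per1} that the paper cites. The paper itself does not give a proof here, only the reference, but from the way the result is used downstream (see the proofs of Propositions~\ref{prop:rho-7-surj}, \ref{prop:rho-5-0-surj}, \ref{prop: rho-3-1-surj} and Notation~\ref{not:algebras}) one can reconstruct the intended argument, and it is the one you outline: $I_{2m-1}^i$ is the stack of triples $((C_1,p_1),(C_2,p_2),\varphi)$ with $\varphi$ an identification of the $m$-th infinitesimal neighborhoods, $F^i$ is the pushout/pinching map, and the torsor structure comes from the automorphism group $G_m\simeq \gm\ltimes U$ of the standard curvilinear algebra $k[t]/(t^m)$.

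The only packaging difference is that \cite{Per1} phrases everything through the auxiliary stacks $\cF_m^c\simeq\cB G_m$ and $\cE_{m,1}^c\simeq[E_{m,1}/G_m^{\times 2}]$ of Notation~\ref{not:algebras}: rather than speaking of ``choosing an isomorphism $\varphi$'', one realizes $I_{2m-1}^i$ as the fiber product of $\Mtilde_{i,[m]}\times\Mtilde_{g-i-m+1,[m]}\to\cB G_m\times\cB G_m$ (each factor sending $(C,p)\mapsto\cO_C/\frkm_p^m$) against $[E_{m,1}/G_m^{\times 2}]\to\cB G_m^{\times 2}$. This is exactly your construction in categorical language, and it has the advantage that the family-level verifications you flag as technical obstacles (flatness of the pinching, control of the singularity type, \'etale-local triviality of the torsor) are absorbed into the general machinery of $\cF_m^c$ and $\cE_{m,d}^c$ developed once and for all in Section~3 of \cite{Per1}, rather than being redone by hand via Artin approximation.
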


Finally, we explain why we can reduce to study the stacks $\cA_n$ and their Chow rings. We have the following result.

\begin{proposition}
	In the case of $g=3$, the functor forgetting the section gives us a natural morphism
	$$ \cA_{\geq n} \rightarrow \widetilde{\cA}_{\geq n}$$
	which is finite birational and it is surjective at the level of Chow groups (after inverting $6$) for every $n\leq 7$.
\end{proposition}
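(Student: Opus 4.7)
The plan is to verify the three assertions — finiteness, birationality, and surjectivity of $f_{*}$ on Chow groups after inverting $6$ — separately, where $f:\cA_{\geq n}\to \widetilde{\cA}_{\geq n}$ denotes the forgetful morphism. Finiteness is immediate from the construction: $\cA_{\geq n}$ is a closed substack of the universal curve $\Ctilde_3^7\to \Mtilde_3^7$, so the composition $\cA_{\geq n}\to \Mtilde_3^7$ is proper, and its set-theoretic fibers are the (finite) sets of $A_{\geq n}$-singular points on the corresponding curve; thus this composition is proper with finite fibers, hence finite, and its image is precisely $\widetilde{\cA}_{\geq n}$, so $f$ itself is finite.

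Birationality I would prove by identifying the dense open substack of $\widetilde{\cA}_{\geq n}$ consisting of curves carrying a unique $A_{\geq n}$-singular point (which then automatically has minimal type $A_n$). Over this open locus the tautological ``mark the singularity'' assignment provides a section of $f$, and in fact $f$ is an isomorphism there, which gives the claim.

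For the Chow-group surjectivity, the plan is to stratify
$$\widetilde{\cA}_{\geq n}=\bigsqcup_k V_k,$$
where $V_k$ is the locally closed substack of curves carrying exactly $k$ singularities of type $A_{\geq n}$, and to bound $k$ via the arithmetic-genus identity $\sum g_i=2+\#\mathrm{comps}(\widetilde C)-\delta\geq 0$ applied with $p_a(C)=3$. A direct combinatorial check shows that $k\leq 3$ when $n\in\{2,3\}$ and $k=1$ when $n\in\{4,\dots,7\}$: in the latter range the genus drop from a single $A_{\geq n}$-singularity already exhausts the arithmetic genus, leaving no room for further $A_{\geq n}$-singularities. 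Over each stratum $V_k$, the restriction $f^{-1}(V_k)\to V_k$ is finite \'etale of degree $k\in\{1,2,3\}$, so the projection formula gives $f_{*}f^{*}=k\cdot\mathrm{id}$ on $\ch(V_k)$; since $k$ is invertible in $\ZZ[1/6]$, the pushforward $f_{*}$ is surjective on $\ch(V_k)$. Iterating the localization exact sequences along the stratification, starting from the smallest-dimensional stratum and building outward, upgrades these stratum-wise surjectivities to the surjectivity of $f_{*}:\ch(\cA_{\geq n})\to \ch(\widetilde{\cA}_{\geq n})$.

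The main obstacle is the combinatorial bound $k\leq 3$: it requires enumerating all possible distributions of $A_{\geq n}$-singularities on a genus-$3$ $A_7$-stable curve and ruling out $k\geq 4$ by the constraint $\delta\leq 2+\#\mathrm{comps}(\widetilde C)$ (attention must be paid to the separating versus non-separating behaviour of odd-index singularities, which affects the component count of the normalization). Once this bound is in hand, the explicit descriptions of the strata $\cA_m$ from \Cref{prop:descr-an-pari}, \Cref{prop:descr-an-disp-ns}, and \Cref{prop:descr-an-disp-i} match naturally with the stratification of $\widetilde{\cA}_{\geq n}$ and the localization induction runs through without extra complication.
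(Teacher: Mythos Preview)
Your approach is essentially the paper's: both hinge on the single combinatorial fact that an $A_7$-stable curve of genus $3$ carries at most three singular points of type $A_{\geq 2}$, and then deduce Chow-surjectivity from the invertibility of $1,2,3$ in $\ZZ[1/6]$. The paper's proof is literally that one sentence; you have supplied the surrounding details (the genus inequality $\sum g_i = 2 + c - \delta \geq 0$ together with the connectivity bound on the dual graph indeed yields $k\leq 3$, as you outline).

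One step needs adjustment. The claim that $f^{-1}(V_k)\to V_k$ is finite \'etale of degree $k$, and hence that $f_*f^*=k\cdot\id$, is not correct as written: the Fitting-ideal scheme structure on $\cA_{\geq n}$ is non-reduced at a point $(C,p)$ whenever $p$ has type $A_m$ with $m>n$ (a local computation over the miniversal deformation gives fiber length $m-n+1$ at such a point, not $1$), so over a stratum $V_k$ containing curves with higher singularities the map is neither \'etale nor flat of degree $k$. The conclusion survives via a more robust argument: for any integral closed substack $Z\subset\widetilde{\cA}_{\geq n}$, pick an irreducible component $W$ of $f^{-1}(Z)_{\rm red}$ dominating $Z$; then $f|_W\colon W\to Z$ is finite surjective of some degree $d$ bounded by the cardinality of the set-theoretic generic fiber, so $d\in\{1,2,3\}$ and $f_*[W]=d\,[Z]$ with $d$ invertible in $\ZZ[1/6]$. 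This component-wise argument (rather than the projection formula on strata) is what the paper's one-line proof implicitly invokes, and it requires only your bound $k\leq 3$ on set-theoretic fiber size.
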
 

\begin{proof}
	It follows from the fact that every $A_r$-stable curve of genus $3$ has at most three singularity of type $A_n$ for $n\geq 2$. 
\end{proof}

Consider now the proper morphisms 
$$ \rho_{\geq n}:\cA_{\geq n} \longrightarrow \Mtilde_g^r$$ 
and their restrictions to $\cA_n$
$$ \rho_n :\cA_n \longrightarrow \Mtilde_g^r\setminus \widetilde{\cA}_{\geq n+1}$$
which is still proper; let $\{f_i\}_{i \in I_n}$ be a set of elements of $\ch(\cA_n)$ indexed by some set $I_n$ such that $\im{\rho_{n,*}}$ is generated by the set $\{\rho_{n,*}(f_i)\}_{i \in I_n}$. We choose a lifting $\tilde{f}_i$ of every $f_i$ to the Chow group of $\cA_{\geq n}$ for every $n=2,\dots,r$ and every $i \in I_n$. We have the following result.

\begin{lemma}\label{lem:strata}
	In the setting above, we have that $\im{\rho_{\geq 2,*}}$ is generated by $\{\rho_{\geq n,*}(\tilde{f}_i)\}_{\forall n, \forall i \in I_n}$
\end{lemma}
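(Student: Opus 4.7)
The plan is to argue by induction on $n$ running from $2$ up to $r$, peeling off one stratum $\cA_n$ at a time via the excision sequence associated to the decomposition $\cA_n = \cA_{\geq n} \setminus \cA_{\geq n+1}$. Throughout I read ``generated'' as ideal generation, which is the natural structure on $\im{\rho_{n,*}}$ by the projection formula. Given $\alpha \in \ch(\cA_{\geq 2})$, at each stage I rewrite $\rho_{\geq n,*}(\alpha)$ modulo the generators $\rho_{\geq n,*}(\tilde{f}_i)$ (with $i \in I_n$) as a pushforward from $\cA_{\geq n+1}$, which then feeds into the next step.

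The input at stage $n$ is the cartesian square
\[
\begin{tikzcd}
	\cA_n \arrow[r, hook, "j_n"] \arrow[d, "\rho_n"'] & \cA_{\geq n} \arrow[d, "\rho_{\geq n}"] \\
	\Mtilde_g^r \setminus \widetilde{\cA}_{\geq n+1} \arrow[r, hook] & \Mtilde_g^r
\end{tikzcd}
\]
and the resulting flat base change identity $\rho_{\geq n,*}(\alpha)|_{\Mtilde_g^r \setminus \widetilde{\cA}_{\geq n+1}} = \rho_{n,*}(j_n^*\alpha)$. By hypothesis this restriction equals $\sum_{i \in I_n} c_i \, \rho_{n,*}(f_i)$ for some $c_i \in \ch(\Mtilde_g^r \setminus \widetilde{\cA}_{\geq n+1})$; pick lifts $\tilde{c}_i \in \ch(\Mtilde_g^r)$ using surjectivity of restriction in the localization sequence. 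Since $j_n^*\tilde{f}_i = f_i$ by the choice of lifting, the class
\[
\beta_n := \rho_{\geq n,*}(\alpha) - \sum_{i \in I_n} \tilde{c}_i \cdot \rho_{\geq n,*}(\tilde{f}_i)
\]
restricts to zero on $\Mtilde_g^r \setminus \widetilde{\cA}_{\geq n+1}$, hence lies in the image of $\ch(\widetilde{\cA}_{\geq n+1}) \to \ch(\Mtilde_g^r)$.

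Here the proposition preceding the lemma enters decisively: in the $g=3$ case, $\cA_{\geq n+1} \to \widetilde{\cA}_{\geq n+1}$ is surjective on Chow groups after inverting $6$, so $\beta_n = \rho_{\geq n+1,*}(\alpha')$ for some $\alpha' \in \ch(\cA_{\geq n+1})$. Iterating with $(\alpha', n+1)$ in place of $(\alpha, n)$, the procedure terminates at $n=r$, where $\cA_{\geq r} = \cA_r$ and the hypothesis on $\{f_i\}_{i \in I_r}$ directly expresses the final remainder as a $\ch(\Mtilde_g^r)$-combination of the $\rho_{\geq r,*}(\tilde{f}_i)$. Collecting all contributions gives the claimed generation of $\im{\rho_{\geq 2,*}}$.

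The main obstacle is not the combinatorics of the induction but the identification of $\im{\rho_{\geq n,*}}$ with the pushforward image from $\widetilde{\cA}_{\geq n}$: this is what allows $\beta_n$ to be recognized as a pushforward from $\cA_{\geq n+1}$ and closes the recursion, and it relies essentially on the $g=3$ surjectivity invoked above. A secondary bookkeeping point is to interpret ``generated'' consistently as ideal generation throughout, so that the lifts $\tilde{c}_i \in \ch(\Mtilde_g^r)$ multiply against the global pushforwards $\rho_{\geq n,*}(\tilde{f}_i)$ in a way that matches the stratum-wise hypothesis after restriction.
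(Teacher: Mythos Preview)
Your argument is correct and is precisely the d\'evissage/localization argument that the paper invokes by citing Lemma~3.3 of \cite{DiLorFulVis}; you have simply written it out in full. One minor caveat: the square you draw is not literally cartesian---the top-left corner should be the open $\rho_{\geq n}^{-1}(\Mtilde_g^r\setminus\widetilde{\cA}_{\geq n+1})\subset\cA_n$, which can be strictly smaller---but this matches the paper's own imprecision in defining $\rho_n$, and the proof goes through unchanged once one works with that open.
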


\begin{proof}
	This is a direct conseguence of Lemma 3.3 of \cite{DiLorFulVis}.
\end{proof}

The previous lemma implies that we need to focus on finding the generators of the relations coming from the strata $\cA_n$ for $n=2,\dots,7$. Therefore in the next section we study the morphism $\rho_n$ and we prove that $\rho_n^*$ is surjective at the level of Chow rings for every $n\geq 3$. The same is not true for $n=2$ but we describe geometrically the generators of the image of $\rho_{2,*}$. 

\section{The relations from the $A_n$-strata}\label{sec:2}

In this section, we are going to describe abstractly the generators of $\rho_{\geq2,*}$. \Cref{lem:strata} assures us it is enough to find the generators of $\rho_{n,*}$ for all $n=2,\dots,7$ and then lift them to $\Mtilde_3^7$. Recall that the characteristic of $\kappa$ is either 0 or greater than $7$. For the rest of the paper, if we do not specify the value of $r$ for a moduli stack of $A_r$-stable curves, it means that we are considering the biggest moduli stack we can form considering $A_r$-singularities. Because of \Cref{rem:genus-count}, we have that $r$ is bounded from above by a function of the genus. Following this notation, $\Mtilde_3$ is the moduli stack $\Mtilde_3^7$ and the same is true for the hyperelliptic locus $\Htilde_3$ and the universal curve $\Ctilde_3$. However, notice that while $\Mtilde_{1,1}$ is equal to $\Mtilde_{1,1}^2$, the notation $\Mtilde_{1,2}$ does not stand for $\Mtilde_{1,2}^2$ but for $\Mtilde_{1,2}^3$ as $\Mtilde_{1,2}^3 \setminus \Mtilde_{1,2}^2 \neq \emptyset$.

Before we start with the abstract computations, we need to recall some notations and results from Section 3 of \cite{Per1}. 

\begin{notation}\label{not:algebras}
 We denote by $\cF^c_m$ the moduli stack parametrizing pointed curvilinear algebras, i.e. pairs $(A,p)$ over an algebraically closed field $k$ where $A$ is a local Artinian algebra over $k$ of lenght $m$, $p$ is the section associated to the maximal ideal and $\dim_{k}m_p/m_p^2 \leq 1$. Furthermore, we denote by $\cE_{m,d}^c$ the moduli stack parametrizing finite flat extensions $(A,p) \hookrightarrow (B,q)$ of degree $d$ of pointed curvilinear algebras where $(A,p)$ is in $\cF^c_m$ (and therefore $(B,q)$ is in $\cF_{dm}^c$). 

Corollary 3.5 of \cite{Per1} gives us an isomorphism between $\cF^c_m$ and the classifying stack of $G_m$, where $G_m$ is the automorphism group of the standard curvilinear algebra $k[t]/(t^m)$ with the standard section $q_0$ defined by $t\mapsto 0$. Notice that every automorphism of the pair $(k[t]/(t^m),q_0)$ is determined by its value on $t$, therefore it follows easily that $G_m$ is isomorphic to $\gm \ltimes U$ where $U$ is a unipotent group (whose underlying scheme is an affine space of dimension $m-2$).

Proposition 3.7 of \cite{Per1} gives us a description of $\cE_{m,d}$ as a quotient stack of a scheme $E_{m,d}$ by the action of $G_m\times G_{md}$. The scheme $E_{m,d}$ can be described as the set of all finite flat extensions $\phi_d: k[t]/(t^m)\hookrightarrow k[t]/(t^{dm})$ of degree $d$ and the action of $G_m \times G_{dm}$ is the natural one. Again, $\phi_d$ is determined by its value on $t$ and it is easy to see that $\phi_d(t)$ is a polynomial of the form $$ a t^d + b_1 t^{d+1} + \dots + b_{d(m-1)-1} t^{dm-1}$$ 
where $a\neq 0$. It follows that $E_{m,d}$ is isomorphic to $(\AA^1 \setminus 0)\times \AA^{d(m-1)-1}$.

Finally, we recall that the morphism
$$ \cE_{m,d}^c \longrightarrow \cF^c_{dm}$$ 
is an affine bundle because $[E_{m,d}/G_m]$ is isomorphic to an affine space $V_{m,d}$ of dimension $(m-1)(d-1)$ described as 
$$V_{m,d}:=\{ f \in E_{m,d}|\, a_0=1, \, a_{kd}(f)=0\quad {\rm for }\quad k=1,\dots,m-2\}$$
where $a_{l}(f)$ is the coefficient of $t^{l}$ of the polynomial $f(t)$. See Lemma 3.8 of \cite{Per1}. 

\end{notation}

\subsection*{Generators for the image of $\rho_{n,*}$ if $n$ is even}

 We are going to prove that the morphism 
$$ \rho_n^*: \ch(\Mtilde_3^7) \longrightarrow \ch(\cA_n)$$ 
is surjective for $n=4,6$.

\begin{proposition}\label{prop:rho-6-surj}
	The morphism $\rho_6^*$ is surjective.
\end{proposition}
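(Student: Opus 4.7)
The strategy is to exploit the affine-bundle description from \Cref{prop:descr-an-pari}: with $g = 3$ and $m = 3$, the stack $\cA_6$ is an affine bundle of relative dimension $2$ over $\Mtilde_{0,[6]}$, so the natural pullback $\ch(\Mtilde_{0,[6]}) \to \ch(\cA_6)$ is an isomorphism. The surjectivity of $\rho_6^*$ therefore reduces to showing that every class on $\Mtilde_{0,[6]}$ lies in the image of pullback from $\Mtilde_3^7$ along $\rho_6$ composed with the affine-bundle projection.

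Next, I would describe $\ch(\Mtilde_{0,[6]})$ explicitly. Using Proposition 4.9 of \cite{Per1}, and the fact that $g - m = 0$ so the underlying curve is a pointed $\PP^1$, combined with \Cref{not:algebras} which presents the curvilinear-algebra automorphism group $G_6 \simeq \gm \ltimes U$ with $U$ unipotent, the stack $\Mtilde_{0,[6]}$ is identified with the classifying stack of an algebraic group whose reductive quotient is a torus. Since unipotent factors contribute nothing to the Chow ring with $\ZZ[1/6]$-coefficients in our characteristic, this gives $\ch(\Mtilde_{0,[6]}) \simeq \ZZ[1/6][\xi_1, \dots, \xi_\ell]$, a polynomial ring in a small number of degree-$1$ generators coming from the characters of the torus.

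Finally, I would exhibit explicit lifts of the $\xi_i$ to $\Mtilde_3^7$. The natural candidates are the Chern classes $\lambda_1, \lambda_2, \lambda_3$ of the Hodge bundle. Using the partial normalization $b \colon \tilde C \to C$ at the universal $A_6$-section together with the relation $b^*\omega_C \simeq \omega_{\tilde C}(J_b^\vee)$ from \Cref{rem:genus-count} (here $\tilde C$ has genus $0$ and $J_b^\vee$ has length $3$ supported at the marked point), one expresses $\HH|_{\cA_6}$ as an extension whose graded pieces are tautological line bundles on $\Mtilde_{0,[6]}$, and reads off $\lambda_i|_{\cA_6}$ as explicit polynomials in the $\xi_j$. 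A rank count together with a check that the resulting coefficient matrix is invertible over $\ZZ[1/6]$ then yields the claimed surjectivity. The main obstacle is precisely this last identification: one must carry out the filtration computation cleanly enough to see that the matrix relating the $\lambda_i|_{\cA_6}$ to the $\xi_j$ has a $\ZZ[1/6]$-unit determinant. Should the $\lambda_i$ alone not suffice, one can supplement with $H$ or with the boundary divisors $\delta_1, \delta_{1,1}, \delta_{1,1,1}$, whose restrictions to $\cA_6$ are transparent (typically vanishing, since $\cA_6$ parametrizes curves with connected normalization at the $A_6$-point).
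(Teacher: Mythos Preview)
Your overall architecture is correct and matches the paper: use \Cref{prop:descr-an-pari} to identify $\ch(\cA_6)$ with the Chow ring of the base, recognize the latter as a polynomial ring over $\ZZ[1/6]$, and then exhibit generators as restrictions of classes from $\Mtilde_3^7$. In fact the base is $\cB(\gm\ltimes\ga)$, so $\ell=1$: the Chow ring is $\ZZ[1/6][s]$ on a single degree-$1$ generator. The paper sharpens this further to $\cA_6\simeq[\AA^1/\gm]$ with $\gm$ acting by weight $-3$, where a point $\lambda\in\AA^1$ corresponds to the extension $t\mapsto t^2+\lambda t^5$ of $\kappa[t]/(t^3)\hookrightarrow\kappa[t]/(t^6)$.

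Where your proposal diverges is in the choice of lift. You reach for the $\lambda_i$ via a filtration of $\HH|_{\cA_6}$ coming from the partial normalization; this is workable in principle, but since the normalization has genus $0$ the entire Hodge bundle is concentrated in the singular contribution, and extracting the $\gm$-weights cleanly takes some bookkeeping that you have not carried out. The paper instead restricts the hyperelliptic class $H$: a curve in $\cA_6$ is hyperelliptic precisely when the involution $t\mapsto -t$ fixes the inclusion $t\mapsto t^2+\lambda t^5$, i.e.\ when $\lambda=0$, so $\rho_6^*(H)=-3s$ and surjectivity is immediate since $3$ is a unit. This bypasses the Hodge-bundle filtration entirely.

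So the gap in your write-up is exactly the one you flag yourself: you never perform the invertibility check, and the $\lambda$-route you propose is the harder of the two. If you want to complete your version, you would need to show $\rho_6^*(\lambda_1)=us$ for some $u\in\ZZ[1/6]^\times$; but it is quicker to switch to $H$ as the paper does.
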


\begin{proof}
	We start by considering the isomorphism proved in Corollary 4.12 of \cite{Per1} (see \Cref{prop:descr-an-pari}) in case $m=3$ and $g=3$. It follows from Proposition 4.9 of \cite{Per1} that  
	$$\cA_6 \simeq [V_{3,3}/\gm \ltimes \ga];$$
    see \Cref{not:algebras} for a description of $V_{3,3}$. As a matter of fact, we proved that the following commutative diagram of stacks
	$$
	\begin{tikzcd}
		\cA_6 \arrow[d] \arrow[r]                         & {\cE_{3,2}^c\simeq [V/G_{6}]} \arrow[d] \\
		{\Mtilde_{0,[3]}\simeq \cB(\gm \ltimes \ga)} \arrow[r, "\cB f"] & \cF_{6}^c:=\cB G_6                     
	\end{tikzcd}
	$$
	is cartesian. The morphism $\cB f$ can be described as the morphism  of classifying stacks induced by the morphism of groups schemes
	$$ f:\gm \ltimes \ga\simeq \aut(\PP^1,\infty) \longrightarrow G_6$$
	defined by the association $\phi \mapsto \phi\otimes_{\cO_{\PP^1}} \cO_{\PP^1}/m_{\infty}^6$, where $m_{\infty}$ is the maximal ideal of the point $\infty$. For the definition of $G_6$, see \Cref{not:algebras}. A simple computation, using the explicit formula of the action of $G_6$ on $V$, shows that 
	$$ \cA_6\simeq [V/\gm \ltimes \ga] \simeq [\AA^1/\gm]$$
	where the action of $\gm$ on $\AA^1$ has weigth $-3$. More explicitly, if we identify $\cO_{\PP^1}/m_{\infty}^6$ with the algebra $\kappa[t]/(t^6)$, an element $\lambda \in \AA^1$ is equivalent to the inclusion of algebras $$\kappa[t]/(t^3) \into \kappa[t]/(t^6)$$ defined by the association $t \mapsto t^2+\lambda t^5$.

	We want to understand the pullback of the hyperelliptic locus, i.e. $\rho_6^*(H)$. It is clear that the locus $\rho_6^{-1}(\Htilde_3)$ is the locus in $[\AA^1/\gm]$ such that the involution $t \mapsto -t$ fixes the inclusion $t \mapsto t^2+\lambda t^5$. This implies $\lambda=0$ and therefore $\rho_6^*(\lambda)=-3s$ where $s$ is the generator of $\ch([\AA^1/\gm])$. This implies the surjectivity.
\end{proof}

\begin{proposition}
	The morphism $\rho_4^*$ is surjective.
\end{proposition}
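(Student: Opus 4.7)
The plan is a direct transcription of the proof of surjectivity of $\rho_6^*$. Applying \Cref{prop:descr-an-pari} with $m=2$ and $g=3$, the stack $\cA_4$ is an affine bundle of dimension $1$ over $\Mtilde_{1,[2]}$, so by homotopy invariance $\ch(\cA_4)\simeq \ch(\Mtilde_{1,[2]})$ and it is enough to prove that every generator of $\ch(\Mtilde_{1,[2]})$ lies in the image of $\rho_4^*$.

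First I would obtain a quotient-stack presentation of $\cA_4$ via the cartesian diagram
$$
\begin{tikzcd}
\cA_4 \arrow[d] \arrow[r] & \cE_{2,2}^c\simeq [V_{2,2}/G_4] \arrow[d] \\
\Mtilde_{1,[2]} \arrow[r, "\cB f"] & \cF_4^c:=\cB G_4,
\end{tikzcd}
$$
where $V_{2,2}$ has dimension $(m-1)(d-1)=1$ as in \Cref{not:algebras}, and the bottom arrow is induced by the action of the automorphism group of a length-$2$ fat point on the formal $4$-jet at the $A_4$-singularity. Combining this with the description of $\Mtilde_{1,[2]}$ given by Proposition 4.9 of \cite{Per1} (as a $G_2\simeq\gm$-bundle over $\Mtilde_{1,1}$), the stack $\cA_4$ will be realised as an explicit torus-and-unipotent quotient fibered over $\Mtilde_{1,1}$. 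A direct calculation using the $G_4$-action on $V_{2,2}$ should then show that $\ch(\cA_4)$ is generated as a $\ZZ[1/6]$-algebra by the Hodge class $\lambda_1$ pulled back from $\Mtilde_{1,1}$ together with a single scaling class $s$ coming from the fat-point weight.

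Next I would verify that both generators lie in the image of $\rho_4^*$. For $\lambda_1$, the Hodge bundle on $\Mtilde_3^7$ restricted to $\cA_4$ decomposes, up to trivial summands contributed by the $A_4$-singularity, into the Hodge bundle of the normalized elliptic component, so $\rho_4^*(\lambda_1)$ coincides with the Hodge generator on $\Mtilde_{1,1}$. For the scaling class, I would argue as in the $\rho_6^*$ case by analyzing $\rho_4^{-1}(\Htilde_3)$: since the $A_4$-singularity $y^2=x^5$ is always a fixed point of any hyperelliptic involution, an $A_4$-stable curve is hyperelliptic precisely when its extension datum in $V_{2,2}$ is symmetric under $t\mapsto -t$, which cuts out the vanishing locus of the single free parameter in $V_{2,2}$. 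The computation should then yield $\rho_4^*(H)=c\cdot s$ for some nonzero scalar $c$, finishing the argument. The main obstacle will be the explicit computation of $\ch(\Mtilde_{1,[2]})$ and of the pullback $\rho_4^*(H)$: unlike the genus-$0$ situation in which $\Mtilde_{0,[3]}\simeq \cB(\gm\ltimes\ga)$ has a single-variable Chow ring, here the base $\Mtilde_{1,1}$ has a nontrivial Chow ring, so one must confirm carefully that the fibration contributes exactly one new generator and that $\rho_4^*(H)$ is a unit multiple of $s$ rather than a combination of $s$ and $\lambda_1$.
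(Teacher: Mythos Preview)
Your skeleton is right, but two of your concrete predictions are wrong in a way that would derail the argument as written.

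First, the base of the affine bundle is $\Mtilde_{1,1}\times[\AA^1/\gm]$, not a $\gm$-bundle over $\Mtilde_{1,1}$; the two generators are the $\psi$-class $t$ of $\Mtilde_{1,1}$ and the generator $s$ of the extra $[\AA^1/\gm]$ factor. This extra factor records the degeneration of the length-$4$ fat point on the elliptic component, and geometrically it detects whether the elliptic tail acquires a separating node. Consequently $\rho_4^*(\delta_1)=s$ on the nose, and this is how the paper obtains $s$.

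Second, your expectation $\rho_4^*(H)=c\cdot s$ is incorrect. You are right that the hyperelliptic locus is cut out by the vanishing of the single coordinate $\lambda$ on the fibre $V_{2,2}$, but the class of $\{\lambda=0\}$ is $c_1(V_{2,2})$ computed as a line bundle over the base $\Mtilde_{1,1}\times[\AA^1/\gm]$, and both tori act on that fibre. Restricting to the open $\Mtilde_{1,1}\hookrightarrow\Mtilde_{1,1}\times[\AA^1/\gm]$ (which kills $s$) and then to $\cB\gm\hookrightarrow\Mtilde_{1,1}$, the fibre becomes $[\AA^1/\gm]$ with weight $-2$, so $\rho_4^*(H)\equiv -2t\pmod{s}$. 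Thus $H$ recovers $t$, not $s$.

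Your fallback of using $\rho_4^*(\lambda_1)$ to recover $t$ is also unreliable: the summands of the Hodge bundle contributed by the $A_4$-singularity are \emph{not} trivial representations (compare the computation of $\rho_5^{0,*}(\lambda_1)$ later in the paper), so $\rho_4^*(\lambda_1)$ need not be a unit multiple of $t$ independent of the other class, and there is a real risk that $\rho_4^*(\lambda_1)$ and $\rho_4^*(H)$ are parallel modulo units. The clean route is the paper's: use $\delta_1$ for $s$ and $H$ for $t$.
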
 

\begin{proof}
	Again, \Cref{prop:descr-an-pari} shows that $\cA_{4}$ is an affine bundle of $\Mtilde_{1,1}\times [\AA^1/\gm]$ and therefore 
	$$ \ch(\cA_{4})\simeq \ZZ[1/6,t,s]$$ 
	where $s$ is the generator of the Picard group of $[\AA^1/\gm]$ and $t$ is the $\psi$-class of $\Mtilde_{1,1}$ (which is a generator of the Chow ring of $\Mtilde_{1,1}$). Exactly as it happens for the pinching morphism described in \cite{DiLorPerVis}, we have $\rho_4^*(\delta_1)=s$ (see Lemma 5.9 of \cite{DiLorPerVis}). We need to compute now $\rho_4^*(H)$, where $H$ is the fundamental class of the hyperelliptic locus inside $\Mtilde_3^7$. Consider now the open immersion 
	$$ \cA_4\vert_{\Mtilde_{1,1}} \into \cA_4$$
	induced by the open immersion $\Mtilde_{1,1} \into \Mtilde_{1,1} \times [\AA^1/\gm]$. We have that 
	$$ \ch(\cA_4\vert_{\Mtilde_{1,1}})=\ZZ[1/6,t,s]/(s)$$ 
	therefore it is enough to prove that $\rho_4^*(H)$ restricted to this open is of the form $-2t$ to have the surjectivity of $\rho_4^*$. 
	
	We know that $\Mtilde_{1,1}\simeq [\AA^2/\gm]$, therefore it is enough to restrict to $\cA_4\vert_{\cB\gm}$ because the pullback of the closed immersion $\cB\gm \into \Mtilde_{1,1}$ is an isomorphism of Chow rings. Similarly to the proof of \Cref{prop:rho-6-surj}, a simple computation shows that $\cA_4\vert_{\cB\gm}$ is isomorphic to $[\AA^1/\gm]$ where $\gm$ acts with weight $-2$. An element in $\lambda \in \AA^1$ is equivalent to the inclusion of algebras $\kappa[t]/(t^2) \into \kappa[t]/(t^4)$ defined by the association $t\mapsto t^2+\lambda t^3$.
	
	The locus $\Htilde_3$ coincides with the locus in $[\AA^1/\gm]$ described by the equation $\lambda=0$. Therefore $\rho_4^*(H)=-2s$ and we are done. 
\end{proof}

Before going to study the morphism $\rho_2$, we need to understand its source. We have that 
$$\cA_{2}\simeq \cA_{2}'\simeq \Mtilde_{2,1} \times [\AA^1/\gm].$$
Recall that $\Mtilde_{2,1}$ is an open substack of $\Ctilde_2$ thanks to \Cref{theo:contrac}. Therefore, the Chow ring of $\Mtilde_{2,1}$ is a quotient of the one of $\Ctilde_2$. 

\begin{lemma}\label{lem:chow-ring-C2}
	The Chow ring of $\Ctilde_2$ is a quotient of the polynomial ring generated by
	\begin{itemize}
		\item[$\bullet$] the $\lambda$-classes $\lambda_1$ and $\lambda_2$ of degree $1$ and $2$ respectively,
		\item[$\bullet$] the $\psi$-class $\psi_1$,
		\item[$\bullet$] two classes $\theta_1$ and $\theta_2$ of degree $1$ and $2$ respectively;
	\end{itemize}
	furthermore, the ideal of relations is generated by 
	\begin{itemize}
		\item[$\bullet$]  $\lambda_2-\theta_2-\psi_1(\lambda_1-\psi_1)$,
		\item[$\bullet$]  $\theta_1(\lambda_1+\theta_1)$,
		\item[$\bullet$]  $\theta_2\psi_1$,
		\item[$\bullet$]  $\theta_2(\lambda_1+\theta_1-\psi_1)$,
		\item[$\bullet$]  an homogeneous polynomial of degree $7$.
	\end{itemize}
\end{lemma}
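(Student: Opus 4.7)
The plan is to leverage the open immersion $\Mtilde_{2,1}\hookrightarrow \Ctilde_2$ coming from the contraction morphism (\Cref{theo:contrac}): the complement $\cA_{\geq 3}=\cA_3\sqcup \cA_4\sqcup \cA_5$ is a union of the explicit strata described by \Cref{prop:descr-an-pari}, \Cref{prop:descr-an-disp-ns}, and \Cref{prop:descr-an-disp-i}, so the localization sequence reduces the problem to combining the (essentially known) Chow ring of $\Mtilde_{2,1}$ from \cite{DiLorPerVis} with the pushforward contributions from these lower-dimensional boundary strata.

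I would identify the generators geometrically as follows: $\lambda_1,\lambda_2$ are the Chern classes of the Hodge bundle of $\Mtilde_2$ pulled back along $\pi:\Ctilde_2\to \Mtilde_2$; $\psi_1=c_1(\omega_\pi)$ is the first Chern class of the relative dualizing sheaf; $\theta_1$ is the fundamental class of the boundary divisor where the marked section lies on a separating node of its fiber; and $\theta_2$ is to be identified with the virtual codimension-$2$ class $c_2(\pi^*\HH-\omega_\pi) = \lambda_2-\psi_1\lambda_1+\psi_1^2$, i.e.\ the failure of the evaluation morphism $\pi^*\HH\to\omega_\pi$ to be a quotient bundle map. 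This identification immediately yields the degree-$2$ relation $\lambda_2-\theta_2-\psi_1(\lambda_1-\psi_1)=0$. The vanishings $\theta_1(\lambda_1+\theta_1)=0$ and $\theta_2\psi_1=0$ are then self-intersection-type relations: on the support of $\theta_1$ the partial normalization of \Cref{rem:genus-count} splits the Hodge bundle and forces $\lambda_1+\theta_1$ to vanish in the normal direction; on the support of $\theta_2$ the relative dualizing sheaf is trivialized appropriately so that $\psi_1$ restricts to zero. The mixed relation $\theta_2(\lambda_1+\theta_1-\psi_1)=0$ is a combination of these two restriction arguments. Finally, the degree-$7$ relation is a Mumford-style top relation, which should be derivable either via GRR applied to $\pi$ or by pulling back the top relation already present in $\ch(\Mtilde_2)$.

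The main obstacle will be proving completeness: that the five listed relations generate the whole ideal. I would address this by computing the Hilbert series of the candidate presentation and matching it against the Hilbert series obtained directly from the localization sequence $\ch(\cA_{\geq 3})\to \ch(\Ctilde_2)\to \ch(\Mtilde_{2,1})\to 0$, using the explicit $\gm\ltimes U$-torsor descriptions of \Cref{prop:descr-an-disp-ns} and \Cref{prop:descr-an-disp-i} together with \Cref{prop:descr-an-pari} to compute $\ch(\cA_n)$ for $n=3,4,5$ and evaluate the boundary contributions. A secondary difficulty is pinning down the precise coefficients with which $\theta_1$ and $\theta_2$ correspond to their (possibly several) underlying boundary components, so that the intended self-intersection identities hold on the nose rather than up to a scalar.
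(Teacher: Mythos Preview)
There is a genuine gap in your identification of the generators, and it undermines the rest of the argument. In the paper, $\theta_1$ is the fundamental class of $\ThTilde_1 := \pi^{-1}(\Detilde_1)$, the codimension-$1$ locus of pairs $(C,p)$ where the genus-$2$ curve $C$ has a separating node \emph{somewhere}; and $\theta_2$ is the fundamental class of $\ThTilde_2 \subset \ThTilde_1$, the codimension-$2$ locus where the section $p$ itself \emph{is} a separating node. Your description of $\theta_1$ (``the marked section lies on a separating node'') is in fact the paper's $\ThTilde_2$, which has codimension $2$ and is not a divisor; and your $\theta_2$ is just a tautological rewriting of the first relation rather than an independent geometric class. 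With the classes misidentified, the self-intersection heuristics you sketch for $\theta_1(\lambda_1+\theta_1)=0$, $\theta_2\psi_1=0$, and $\theta_2(\lambda_1+\theta_1-\psi_1)=0$ do not carry the intended content: for instance, the first holds because on $\ThTilde_1$ the Hodge bundle splits along the two elliptic tails and one checks directly that $(\lambda_1+\theta_1)\vert_{\ThTilde_1}=0$ on each substratum --- an argument tied to the correct $\ThTilde_1$, not to the node-at-the-section locus you named.

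Your overall strategy also differs from the paper's and runs into a circularity. The paper does not pass through the open immersion $\Mtilde_{2,1} \hookrightarrow \Ctilde_2$ and a localization along $\cA_{\geq 3}$; it applies the gluing lemma directly to the stratification $\ThTilde_2 \subset \ThTilde_1 \subset \Ctilde_2$ of \cite{DiLorPerVis}. The open stratum is identified as $[\widetilde{\AA}(6)\setminus 0/B_2]$, and the degree-$7$ relation is precisely the equivariant Euler class of the $7$-dimensional representation $\widetilde{\AA}(6)$ arising from the excised origin --- not a Mumford/GRR-type identity. The deeper strata are $(\Ctilde_{1,1}\setminus \Mtilde_{1,1})\times \Mtilde_{1,1}$ and $\Detilde_1$, and the four listed relations below degree $7$ drop out of the explicit restriction tables for $\lambda_i,\psi_1,\theta_1,\theta_2$ together with \Cref{lem:gluing}. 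By contrast, your route presupposes $\ch(\Mtilde_{2,1})$ for the maximal $r$ as input, but \cite{DiLorPerVis} only treats the cuspidal case $r\le 2$; in the present paper $\ch(\Mtilde_{2,1})$ is obtained \emph{from} this lemma as a quotient of $\ch(\Ctilde_2)$, so invoking it as known is circular.
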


\begin{proof}
	We do not describe all the computation in details. The idea is to use the stratification introduced in Section 4 of \cite{DiLorPerVis},i.e. 
	$$\ThTilde_2 \subset \ThTilde_1 \subset \Ctilde_2$$ 
	where $\ThTilde_1$ is the pullback of $\Detilde_1$ through to morphism $\Ctilde_2 \rightarrow \Mtilde_2$ and $\ThTilde_2$ is the closed substack of $\Ctilde_2$ parametrizing pairs $(C,p)$ such that $p$ is a separating node. We denote by $\theta_1$ and $\theta_2$ the fundamental classes of $\ThTilde_1$ and $\ThTilde_2$. Notice that the only difference with our situation is in the open stratum $\Ctilde_2 \setminus \ThTilde_1$. In fact, the authors of \cite{DiLorPerVis} proved in Proposition 4.1 that  
	$$\Ctilde_2^2\setminus \ThTilde_1 \simeq [U/B_2]$$
	where $U$ is an open inside a $B_2$-representation $\widetilde{\AA}(6)$.
	The same proof generalizes in the case $r=7$ (see Proposition 4.1 of \cite{Per3}) and it gives us that 
	$$ \Ctilde_2^7\setminus \ThTilde_1 \simeq [\widetilde{\AA}(6)\setminus 0/B_2]$$
	and therefore the zero section in $\widetilde{\AA}(6)$ gives us a relation of degree $7$.  We also have the following isomorphisms:
	\begin{itemize}
		\item $\ThTilde_1 \setminus \ThTilde_2 \simeq (\Ctilde_{1,1}\setminus \Mtilde_{1,1})\times \Mtilde_{1,1}$,
		\item $\ThTilde_2 \simeq \Detilde_1$;
	\end{itemize}
 thus we have the following descriptions of the Chow rings of the strata:
	\begin{itemize}
		\item $\ch(\Ctilde_2\setminus \ThTilde_1) \simeq \ZZ[1/6,t_0,t_1]/(f_7)$,
		\item $\ch(\ThTilde_1 \setminus \ThTilde_2
		) \simeq \ZZ[1/6,t,s]$,
		\item $\ch(\ThTilde_2) \simeq \ZZ[1/6,\lambda_1,\lambda_2]$
	\end{itemize}
where $f_7$ is an homogeneous polynomial of degree $7$. Finally, one can prove the following identities:
\begin{itemize}
	\item $\lambda_1\vert_{\Ctilde_2\setminus \ThTilde_1} = -t_0-t_1$, $\lambda_1\vert_{\ThTilde_1\setminus \ThTilde_2}=-t-s$;
	\item $\lambda_2\vert_{\Ctilde_2\setminus \ThTilde_1}=t_0t_1$, $\lambda_2\vert_{\ThTilde_1\setminus \ThTilde_2}=st$;
	\item $\psi_1\vert_{\Ctilde_2\setminus \ThTilde_1}=t_1$, $\psi_1\vert_{\ThTilde_1\setminus \ThTilde_2}=t$, $\psi_1\vert_{\ThTilde_2}=0$;
	\item $\theta_1\vert_{\ThTilde_1\setminus \ThTilde_2}=t+s$, $\theta_1\vert_{\ThTilde_2}=-\lambda_1$;
	\item $\theta_2\vert_{\ThTilde_2}=\lambda_2$.
\end{itemize}
The result follows from applying the gluing lemma.
\end{proof}

\begin{remark}
	Clearly, $\rho_2^*$ cannot be surjective at the level of Chow rings, as it not true even at the level of Picard groups. In fact, the Picard group of $\Mtilde_3$ is an abelian free group of rank $3$ while the Picard group of $\Mtilde_{2,1}\times [\AA^1/\gm]$ is an abelian free group of rank $4$. 
\end{remark}

We are ready for the proposition.
\begin{proposition}\label{prop:gener-rho-2}
	The image of the pushforward of
	$$\rho_2: \Mtilde_{2,1} \times [\AA^1/\gm] \simeq \cA_2 \longrightarrow \Mtilde_3\setminus \widetilde{\cA}_{\geq 3}$$
	is generated by the elements $\rho_{2,*}(1)$, $\rho_{2,*}(s)$ and $\rho_{2,*}(s\theta_1)$, where $s$ is the generator of the Chow ring of $[\AA^1/\gm]$ and $\theta_1$ is the fundamental class of the locus parametrizing curves with a separating node.
\end{proposition}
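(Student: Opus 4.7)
The projection formula gives, for every $\alpha\in\ch(\cA_2)$ and $\beta\in\ch(\Mtilde_3\setminus \widetilde{\cA}_{\geq 3})$, the identity
$$\rho_{2,*}(\alpha\cdot \rho_2^*\beta) \,=\, \rho_{2,*}(\alpha)\cdot \beta,$$
so $\im{\rho_{2,*}}$ is an ideal of $\ch(\Mtilde_3\setminus \widetilde{\cA}_{\geq 3})$, and it is generated as such by a family of classes $\{\rho_{2,*}(\gamma_i)\}$ as soon as the $\gamma_i$ generate $\ch(\cA_2)$ as a module over $\rho_2^*\ch(\Mtilde_3\setminus \widetilde{\cA}_{\geq 3})$. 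The plan is therefore to show that $1$, $s$ and $s\theta_1$ generate $\ch(\cA_2)=\ch(\Mtilde_{2,1})[s]$ as such a module.

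The main ingredient is an explicit computation of $\rho_2^*$ on the natural generators of $\ch(\Mtilde_3\setminus \widetilde{\cA}_{\geq 3})$. Geometrically, $\rho_2$ is a pinching construction: a triple $(C',p,\lambda)\in\Mtilde_{2,1}\times [\AA^1/\gm]$ is sent to the cuspidal genus-$3$ curve $C$ obtained by pinching the first-order neighbourhood of $p$ with parameter $\lambda$. By direct analogy with Lemma~5.9 of \cite{DiLorPerVis}, where the corresponding computation was carried out for the $A_4$-stratum, one expects $\rho_2^*(\delta_1)=s+\theta_1$, the two contributions corresponding respectively to the pinching datum and to a pre-existing separating node on the genus-$2$ component. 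The classes $\rho_2^*(\lambda_i)$ are obtained by applying the Noether formula to the normalisation $\widetilde{C}\to C$ at the cusp (whose conductor has length $2$), which yields an exact sequence
$$0\longrightarrow \HH_2\longrightarrow \rho_2^*\HH_3\longrightarrow L\longrightarrow 0$$
on $\cA_2$ with $L$ a line bundle whose first Chern class is an explicit polynomial in $\psi_1$ and $s$. Finally, $\rho_2^*(H)$ is determined by intersecting $\Htilde_3$ with the cuspidal stratum, using the description of $\Htilde_3$ given in \Cref{prop:descr-hyper}.

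With these formulae in hand, the reduction of a general monomial in the algebra generators $\lambda_1,\lambda_2,\psi_1,\theta_1,\theta_2,s$ of $\ch(\cA_2)$ proceeds as follows. The identity $\theta_1=\rho_2^*(\delta_1)-s$ absorbs every isolated $\theta_1$ into the submodule generated by $1$ and $s$; the relation $\theta_1(\lambda_1+\theta_1)=0$ of \Cref{lem:chow-ring-C2} reduces any higher power of $\theta_1$ to a multiple of $\theta_1$. Powers of $s$ are reduced inductively via
$$s^k \,=\, \rho_2^*(\delta_1)\cdot s^{k-1}-\theta_1 s^{k-1},$$
the right-hand side lying in $\rho_2^*\ch(\Mtilde_3\setminus \widetilde{\cA}_{\geq 3})\cdot \{1,s,s\theta_1\}$ once $\theta_1 s^{k-1}$ is rewritten using the previous step. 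The generators $\lambda_i$ and $\psi_1$ are eliminated using the pullbacks $\rho_2^*(\lambda_i)$, whereas $\theta_2$ is reduced by combining the relations $\theta_2\psi_1=0$ and $\theta_2(\lambda_1+\theta_1-\psi_1)=0$ of \Cref{lem:chow-ring-C2} with an expression for $\rho_2^*(\delta_{1,1})$, which should restrict to $\theta_2$ up to corrections supported in $s$ and $\theta_1$.

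The hard part will be the explicit form of the pullbacks — in particular $\rho_2^*(H)$, the corrections in $\rho_2^*(\lambda_i)$, and the decomposition of $\rho_2^*(\delta_{1,1})$ along the product stratification of $\cA_2$ — each of which requires a careful analysis of the geometry of cuspidal curves and of the hyperelliptic locus near the cuspidal stratum. Once these are known, the remaining reductions become a direct (if tedious) calculation inside the presentation of $\ch(\Mtilde_{2,1})$ given by \Cref{lem:chow-ring-C2}, the degree-$7$ relation there providing the bound needed to ensure that sufficiently high-degree monomials are automatically absorbed into the submodule.
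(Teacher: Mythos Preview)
Your overall plan---compute $\rho_2^*$ on the generators of $\ch(\Mtilde_3\setminus\widetilde{\cA}_{\geq 3})$ and use the projection formula to reduce to a module-generation problem---is exactly the paper's approach, and your qualitative descriptions of $\rho_2^*(\delta_1)$, $\rho_2^*(\lambda_i)$, $\rho_2^*(H)$ and $\rho_2^*(\delta_{1,1})$ are on target. However, the reduction of powers of $s$ as you describe it does not close.

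Your inductive step is $s^k=\rho_2^*(\delta_1)s^{k-1}-\theta_1 s^{k-1}$, and you propose to handle $\theta_1 s^{k-1}$ via ``the previous step''. But the only relation you invoke for $\theta_1$ is $\theta_1(\lambda_1+\theta_1)=0$, and once you substitute $\lambda_1=(\text{class in }\im\rho_2^*)+s$, the computation of $\theta_1 s^{k-1}$ feeds back into itself. Concretely, combining $s^2\theta_1=\rho_2^*(\delta_1)\,s\theta_1-s\theta_1^2$ with $s\theta_1^2=-\lambda_1 s\theta_1$ just recovers the known relation $(\lambda_1+\theta_1)\theta_1=0$ and yields no expression for $s^2\theta_1$ (equivalently $s^3$) in the module generated by $1,s,s\theta_1$. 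Your appeal to the degree-$7$ relation is also misplaced: that relation lives among the generators $\lambda_1,\lambda_2,\psi_1,\theta_1,\theta_2$ of $\ch(\Mtilde_{2,1})$, whereas $\ch(\cA_2)=\ch(\Mtilde_{2,1})[s]$ is a polynomial ring in $s$ with no relation bounding its powers.

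What actually terminates the induction is a relation you have not written down: the paper observes that $\rho_2^*(\delta_{1,1,1})=s\theta_2$. Combined with $\rho_2^*(\delta_{1,1})=\theta_2+s\theta_1$ and $\rho_2^*(\delta_1)=s+\theta_1$, this gives
\[
\rho_2^*(\delta_{1,1,1})=s\bigl(\rho_2^*(\delta_{1,1})-s(\rho_2^*(\delta_1)-s)\bigr),
\]
i.e.\ a genuine cubic relation $s^3=\rho_2^*(\delta_{1,1,1})-s\,\rho_2^*(\delta_{1,1})+s^2\,\rho_2^*(\delta_1)$. This is what reduces the generating set from $\{s^i\}_{i\geq 0}$ to $\{1,s,s^2\}$, after which swapping $s^2$ for $s\theta_1$ via $s\theta_1=s(\rho_2^*(\delta_1)-s)$ is immediate. (One could equivalently extract this cubic relation from $\rho_2^*(c_3(\HH))=\lambda_2(\psi_1-s)$, but you do not make this connection either.) Add the computation of $\rho_2^*(\delta_{1,1,1})$ and use it in place of your circular induction, and the argument goes through.
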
 

\begin{proof}
	For this proof, we denote by $\lambda_1$ and $\lambda_2$ the Chern classes of the Hodge bundle of $\Mtilde_{2,1}$, whereas the $i$-th Chern class of the Hodge bundle of $\Mtilde_3$ is denoted by $c_i(\HH)$ for $i=1,2,3$.
	
	We need to describe the pullback of the generators of the Chow ring of $\Mtilde_3$ through $\rho_2$. By construction, it is easy to see that $\rho_2^*(\delta_1)=s+\theta_1$, $\rho_2^*(\delta_{1,1})=\theta_2+s\theta_1$ and $\rho_2^*(\delta_{1,1,1})=s\theta_2$. 
	
	Notice that $\rho_2^{-1}(\Htilde_3)$ is the fundamental class of the closed substack $\Mtilde_{2,\omega} \times [\AA^1/\gm]$ of $\cA_2$, where $\Mtilde_{2,\omega}$ is the closed substack of $\Mtilde_{2,1}$ which parametrizes pairs $(C,p)$ such that $p$ is fixed by the (unique) involution of $C$. To compute its class, we need to use the stratification used in the proof of \Cref{lem:chow-ring-C2}.
	In the open stratum $\Mtilde_{2,1}\setminus \ThTilde_1$, Proposition 4.4 of \cite{Per3} implies that the restriction of $[\Mtilde_{2,\omega}]$ is equal to $-\lambda_1-3 \psi_1$.
	In the stratum $\ThTilde_1\setminus \ThTilde_2$, we have that the restriction is of the form $-3\psi_1$. This implies that $\rho_2^*(H)=-\lambda_1-3\psi_1-\theta_1$.
	
	Finally, to compute the restriction of $c_i(\HH)$ for $i=1,2,3$, we can restrict to the closed substack $\Mtilde_{2,1}\times \cB\gm \into \Mtilde_{2,1} \times [\AA^1/\gm]$ as the pullback of the closed immersion is clearly an isomorphism because it is the zero section of a vector bundle. The explicit description of the isomorphism $\Mtilde_{2,1}\times [\AA^1/\gm] \simeq \cA_2$ (which was constructed in Section 2 of \cite{DiLorPerVis}) implies that the morphism $\rho_2\vert_{\Mtilde_{2,1}\times \cB\gm}$ maps an object $(\widetilde{C}/S,q)$ to the object $(C/S,p)$ in the following way: consider the projective bundle $\PP(N_{q}\oplus N_0)$ over $S$, where $N_q$ is the normal bundle of the section $q$ and $N_0$ is the pullback to $S$ of the $1$-dimensional representation of $\gm$ of weight $1$; we have two natural sections defined by the two subbundles $N_q$ and $N_0$ of $N_q\oplus N_0$, namely $\infty$ and $0$; the object $(C/S,p)$ is defined by gluing $\infty$ with $q$, pinching in $0$ and then setting $p:=0$. A computation identical to the one of Proposition 5.9 of \cite{DiLorPerVis} implies the following formulas:
	\begin{itemize}
		\item $\rho_2^*(c_1(\HH))=\lambda_1+\psi_1-s$,
		\item $\rho_2^*(c_2(\HH))=\lambda_2+\lambda_1(\psi_1-s)$,
		\item $\rho_2^*(c_3(\HH))=\lambda_2(\psi_1-s)$.
	\end{itemize}
	The description of the restrictions of the generators of $\ch(\Mtilde_3)$ gives us that the image of $\rho_{2,*}$ is the ideal generated by $\rho_{2,*}(s^i)$ for every $i$ non-negative integer. Moreover, we have that 
	$$\rho_2^*(\delta_{1,1,1})=s\theta_2=s(\rho_2^*(\delta_{1,1})-s(\rho_2^*(\delta_1)-s))$$
	which implies that $\rho_{2,*}(s^i)$ is in the ideal generated by $\rho_{2,*}(1)$, $\rho_{2,*}(s)$ and $\rho_2^*(s^2)$ for every $i\geq 3$. Finally, we have that 
	$$s\theta_1=s(\rho_2^*(\delta_1)-s)$$
	therefore we can use $s\theta_1$ as a generator with $\rho_{2,*}(1)$ and $\rho_{2,*}(s)$ instead of $\rho_{2,*}(s^2)$.
\end{proof}

\begin{remark}
	Notice that $\rho_{2,*}(s)$ is equal to the fundamental class of the image of the morphism $\Mtilde_{2,1} \times \cB\gm \rightarrow \Detilde_1 \into \Mtilde_{3}$.  We denote this closed substack $\Detilde_1^c$; it parametrizes curves $C$ obtained by gluing a genus $2$ curve with a genus $1$ cuspidal curve in a separating node.
	
	In the same way, $\rho_{2,*}(s\theta_1)$ is equal to the fundamental class of the image of the morphism $\ThTilde_1 \times \cB\gm \into \Detilde_{1,1} \into \Mtilde_3$. We denote this closed substack $\Detilde_{1,1}^c$; it parametrizes curves $C$ in $\Detilde_{1,1}$ such that one of the two elliptic tails is cuspidal.
\end{remark}

\subsection*{Generators for the image of $\rho_{n,*}$ if $n$ is odd}

Now we deal with the odd case. This is a bit more convoluted as we have several strata to deal with for every $n$. Let us recall the descriptions we have. See \Cref{prop:descr-an-disp-ns} and \Cref{prop:descr-an-disp-i}.

First of all, $\cA_{2m-1}$ is the disjoint union of $\cA_{2m-1}^{\rm ns}$ and $\cA_{2m-1}^i$ for $0\leq i\leq (g-m+1)/2$. Because $g=3$, we have the following possibilities:
\begin{itemize}
	\item if $m=4$, we have only one component, namely $\cA_7^0$;
	\item if $m=3$, we have two components, namely $\cA_5^0$ and $\cA_5^{\rm ns}$;
	\item if $m=2$, we have three components, namely $\cA_5^0$, $\cA_5^1$ and $\cA_5^{\rm ns}$.
\end{itemize}
First of all, notice that $\cA_5^0$ is empty, due to the stability condition. Therefore we need to deal with $5$ components. 

We start with the case $m=4$.

\begin{proposition}\label{prop:rho-7-surj}
	The pullback of the morphism 
	$$\rho_7:\cA_7^0 \longrightarrow \Mtilde_3$$
	is surjective.
\end{proposition}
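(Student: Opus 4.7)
The plan is to mirror the proof of \Cref{prop:rho-6-surj}. Applying \Cref{prop:descr-an-disp-i} with $g=3$, $m=4$ and $i=0$ (noting that $2i=0=g-m+1$), we obtain a finite étale degree-$2$ cover $F^0\colon I_7^0\to \cA_7^0$, where $I_7^0$ is a $\gm\ltimes U$-torsor over $\Mtilde_{0,[4]}\times \Mtilde_{0,[4]}$ with $U$ a unipotent group of dimension $m-2=2$. Since we are working with $\ZZ[1/6]$-coefficients, $\ch(\cA_7^0)$ identifies with the $\ZZ/2$-invariants of $\ch(I_7^0)$, where the involution swaps the two branches at the $A_7$-point; it therefore suffices to realize the invariant generators of $\ch(I_7^0)$ as pullbacks from $\Mtilde_3$.

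To describe $\ch(I_7^0)$ explicitly, I would use Proposition 4.15 of \cite{Per1}, which realizes $\Mtilde_{0,[4]}$ as the classifying stack of the group of automorphisms of $\PP^1$ preserving a length-$4$ curvilinear marking; this group is of the form $\gm\ltimes V$ with $V$ unipotent, generalizing the identification $\Mtilde_{0,[3]}\simeq \cB(\gm\ltimes \ga)$ used in \Cref{prop:rho-6-surj}. Since unipotent groups contribute trivially to Chow with $\ZZ[1/6]$-coefficients and a $\gm\ltimes U$-torsor is Chow-equivalent to a $\gm$-torsor, I expect
$$\ch(I_7^0)\simeq \ZZ[1/6][t_1,t_2]/(f(t_1,t_2)),$$
for a single linear relation $f$ coming from the $\gm$-quotient, with $t_1,t_2$ the tautological generators pulled back from the two factors. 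The $\ZZ/2$-invariant subring is then generated by the elementary symmetric polynomials $t_1+t_2$ and $t_1 t_2$, modulo the (symmetrized) relation coming from $f$, leaving at most two generators to be hit.

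Next I would compute the pullbacks of the natural generators of $\ch(\Mtilde_3)$. The Hodge classes $\lambda_i=c_i(\HH)$ are computed from the normalization sequence at the $A_7$-point using $b^{\ast}\omega_C\simeq \omega_{\widetilde C}(J_b^{\vee})$ from \Cref{rem:genus-count}, together with the fact that both components of the partial normalization are genus-$0$ curves; this yields symmetric polynomials in $t_1,t_2$. The pullback $\rho_7^{\ast}(H)$ is computed by identifying the locus in $\cA_7^0$ where the hyperelliptic involution extends, which corresponds (as in \Cref{prop:rho-6-surj}) to an explicit vanishing condition on a local parameter of the $A_7$-singularity $y^2=x^8$. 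The boundary classes $\delta_1,\delta_{1,1},\delta_{1,1,1}$ pull back to classes supported on explicit loci readable from the stratification.

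The main obstacle will be verifying that, after symmetrization, these pullbacks jointly span $\ch(\cA_7^0)$, in particular that they capture a generator in the $\gm$-direction rather than only in the directions coming from the base $\Mtilde_{0,[4]}\times \Mtilde_{0,[4]}$. By analogy with \Cref{prop:rho-6-surj}, the decisive input is the direct algebraic computation of $\rho_7^{\ast}(H)$ in the local coordinates of the $A_7$-singularity: a generic geometric point of $\cA_7^0$ is hyperelliptic, and the condition for the involution to globalize cuts out exactly the section responsible for the $\gm$-generator. Once $\rho_7^{\ast}(H)$ is seen to hit this class, together with the Hodge classes accounting for the remaining symmetric generator, the surjectivity of $\rho_7^{\ast}$ follows.
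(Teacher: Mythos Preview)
Your approach is essentially the paper's: reduce to $C_2$-invariants on $\ch(I_7^0)$ via \Cref{prop:descr-an-disp-i}, identify the Chow ring, and hit the generator with $\rho_7^*(H)$. Two points deserve sharpening.

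First, the Chow ring is smaller than you anticipate. The paper computes the fiber product directly and finds $I_7^0\simeq[\AA^1/\gm\ltimes\ga]$, with $\AA^1$ a weight-$2$ representation and $C_2$ acting by $\lambda\mapsto-\lambda$; hence $\ch(I_7^0)\simeq\ZZ[1/6][s]$ and $C_2$ acts trivially, so $\ch(\cA_7^0)\simeq\ZZ[1/6][s]$ as well. In your presentation this amounts to the torsor relation being $f=t_1-t_2$, so the two base generators collapse to one, and the $C_2$-swap becomes the identity on the quotient. There is a single generator to hit, not two, and the Hodge-bundle computation you outline is unnecessary.

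Second, the sentence ``a generic geometric point of $\cA_7^0$ is hyperelliptic'' is not correct as written and contradicts the clause that follows it. The hyperelliptic locus in $\cA_7^0$ is a proper closed substack: in the paper's coordinates it is the origin $\lambda=0$ in $[\AA^1/\gm\ltimes\ga]$, exactly as in \Cref{prop:rho-6-surj}. What you presumably mean is that the local involution $y\mapsto-y$ of the $A_7$-germ $y^2=x^8$ is always present, and the condition for it to globalize is codimension one; that is correct, and its class is $\rho_7^*(H)=2s$, which finishes the argument since $2$ is a unit.
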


\begin{proof}
	The proof is similar to one of \Cref{prop:rho-6-surj}. First of all, we describe the Chow ring of $\cA_7^0$. We can apply \Cref{prop:descr-an-disp-i} and Lemma 2.5 of \cite{Per3} to get that 
	$$\ch(\cA_7^0) \simeq \ch(I_{7}^0)^{\rm inv}$$
 	where the invariants are taking with respect of the action of $C_2$ induced by the involution defined by the association $$\Big((C_1,p_1),(C_2,p_2),\phi\Big) \mapsto \Big((C_2,p_2),(C_1,p_1),\phi^{-1}\Big).$$
 	
 	By construction, we have that $I_7^0$ is the fiber product of the diagram
 	$$
 	\begin{tikzcd}
 		& {[E_{4,1}/G_4^{\times 2}]} \arrow[d] \\
 		\cB(\gm \ltimes \ga)^{\times 2} \arrow[r, "{\cB(f^{\times 2})}"] & \cB (G_4^{\times 2})                
 	\end{tikzcd}
 	$$
 	where the morphism $f$ is described in the proof of \Cref{prop:rho-6-surj} (see also \Cref{not:algebras}). A simple computation shows that 
 	$$ I_7^0 \simeq [\AA^1/\gm\ltimes \ga]$$
 	where $\ga$ acts trivially and $\AA^1$ is the $\gm$-representation with weight $2$. Furthermore, one can prove that $C_2$ acts trivially on $\gm\ltimes \ga$ and acts on $\AA^1$ by the rule $\lambda \mapsto -\lambda$. Therefore it is clear that
 	$$\ch(I_7^0) \simeq \ZZ[1/6,s]$$
 	where $s$ is the generator of the Chow ring of $\cB \gm$ and a simple computation shows the $\rho_7^{*}(H)=2s$.
\end{proof}

We now deal with the case $m=3$. We have to split it in two subcases, namely $\cA_5^0$ and $\cA_5^{\rm ns}$. We denote by $\rho_5^{0}$ and $\rho_5^{\rm ns}$ the restriction of $\rho_5$ to the two connected components $\cA_5^0$ and $\cA_5^{\rm ns}$ respectively.

\begin{proposition}\label{prop:rho-5-0-surj}
	The pullback of the morphism 
	$$\rho_5^{0}:\cA_5^0 \longrightarrow \Mtilde_3$$
	is surjective.
\end{proposition}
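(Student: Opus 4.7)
The plan is to mirror the strategy of \Cref{prop:rho-6-surj} and \Cref{prop:rho-7-surj}. By \Cref{prop:descr-an-disp-i} applied with $g=3$, $m=3$, $i=0$, we have $2i = 0 \neq 1 = g-m+1$, so $F^0\colon I_5^0 \to \cA_5^0$ is an isomorphism, and thus $\cA_5^0 \simeq I_5^0$ is a $\gm\ltimes U$-torsor over $\Mtilde_{0,[3]}\times\Mtilde_{1,[3]}$ with $U$ unipotent of underlying scheme $\AA^1$.

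First I would give an explicit quotient-stack presentation of $\cA_5^0$ analogous to the description of $I_7^0$ in the proof of \Cref{prop:rho-7-surj}: realize $\cA_5^0$ as a fiber product
$$
\begin{tikzcd}
\cA_5^0 \arrow[r]\arrow[d] & {[E_{3,2}/G_3\times G_6]} \arrow[d] \\
\Mtilde_{0,[3]}\times\Mtilde_{1,[3]} \arrow[r] & \cB G_3 \times \cB G_3
\end{tikzcd}
$$
encoding the gluing of two length-$3$ curvilinear algebras across the $A_5$-singularity. Using the identification $\Mtilde_{0,[3]}\simeq \cB(\gm\ltimes\ga)$ from the proof of \Cref{prop:rho-6-surj} and the explicit form of $V_{m,d}$ recalled in \Cref{not:algebras}, this should simplify to a description of $\cA_5^0$ as an affine bundle over a product $[\AA^1/\gm]\times \Mtilde_{1,[3]}$ (with a suitable $\gm$-weight), so that $\ch(\cA_5^0)$ is a polynomial ring over $\ch(\Mtilde_{1,[3]})$ in one extra generator $s$.

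Next I would compute the pullbacks through $\rho_5^0$ of the generators $\lambda_1,\lambda_2,\lambda_3,\delta_1,\delta_{1,1},\delta_{1,1,1},H$ of $\ch(\Mtilde_3)$. The Hodge classes and the boundary classes $\delta_1,\delta_{1,1},\delta_{1,1,1}$ produce, via the gluing construction of a genus-$3$ curve from its two branches, classes living in the image of the pullback from $\ch(\Mtilde_{1,[3]})$ together with elementary contributions coming from the $\gm$-factor. The decisive computation is $\rho_5^{0*}(H)$: by the same argument as in the proofs of \Cref{prop:rho-6-surj} and \Cref{prop:rho-7-surj}, the preimage of $\Htilde_3$ is cut out by demanding that the length-$6$ inclusion $\kappa[t]/(t^3)\hookrightarrow \kappa[t]/(t^6)$ be compatible with the involution $t\mapsto -t$, which produces a nonzero scalar multiple of $s$. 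Together with the pullbacks of the other generators, this should give surjectivity.

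The main obstacle will be understanding $\Mtilde_{1,[3]}$: one needs a description of its Chow ring (expected to be a polynomial ring generated by the $\psi$-class of the marked section and the Hodge class of the genus-$1$ factor) and one must check that each of its generators arises as the restriction of a class pulled back from $\Mtilde_3$ via suitable combinations of $\lambda_i$, $\delta_{1,1}$, and $\delta_{1,1,1}$. Once this matching is in place, surjectivity of $\rho_5^{0*}$ will follow from the explicit form of $\rho_5^{0*}(H)$ combined with the boundary-class computations.
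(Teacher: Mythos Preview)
Your overall framework is right, but the ``decisive computation'' you single out is wrong, and this is a genuine gap. For $\cA_5^0$ the two branches at the $A_5$-point have genera $0$ and $1$, so no involution can exchange them; since any hyperelliptic involution on an $A_5$-singularity must swap the branches, one has $\cA_5^0\cap\Htilde_3=\emptyset$ and hence $\rho_5^{0,*}(H)=0$. The argument you import from \Cref{prop:rho-6-surj} and \Cref{prop:rho-7-surj} does not apply: in those cases the curve is irreducible (for $\cA_6$) or the two branches are symmetric (for $\cA_7^0$), and there really is a hyperelliptic locus cut out by compatibility with $t\mapsto -t$. Here there is none, so $H$ contributes nothing and cannot supply the missing generator.

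The paper's proof instead shows $\cA_5^0\simeq\Mtilde_{1,1}\times[\AA^1/\gm]$ with Chow ring $\ZZ[1/6,t,s]$, observes $\rho_5^{0,*}(\delta_1)=s$, and then---precisely because $H$ restricts to zero---is forced to compute $\rho_5^{0,*}(\lambda_1)$ to reach $t$. This is done by restricting to $\cB\gm\hookrightarrow\Mtilde_{1,1}$ and running the long exact sequence in cohomology for $0\to\cO_C\to\cO_E\oplus\cO_{\PP^1}\to Q\to 0$, obtaining $\rho_5^{0,*}(\lambda_1)=-2t$ modulo $s$. You gesture at the Hodge classes in passing, but you do not identify this as the actual crux, and your proposal as written would stall once $\rho_5^{0,*}(H)=0$ is discovered. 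A smaller point: the fiber product should sit over $[E_{3,1}/G_3\times G_3]\to\cB(G_3\times G_3)$, encoding an identification of the two length-$3$ infinitesimal neighborhoods, not $E_{3,2}$.
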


\begin{proof}
	In this case, \Cref{prop:descr-an-disp-i} tells us that $\cA_5^0$ is isomorphic to $I_5^0$. We have a commutative diagram
	$$
	\begin{tikzcd}
		& {[E_{3,1}/G_3\times G_3]} \arrow[d] \\
		{\cB(\gm \ltimes \ga)\times \Mtilde_{1,1}\times [\AA^1/\gm]} \arrow[r, "{(\cB f,g)}"] & \cB (G_3\times G_3)                
	\end{tikzcd}
	$$
	where the morphism $\cB f:\cB(\gm \ltimes \ga) \rightarrow \cB G_3$ is the same as in \Cref{prop:rho-7-surj}, whereas we recall that 
	$$ g:\Mtilde_{1,[3]} \longrightarrow G_3$$
	is defined by the association $(E,e)\mapsto \cO_{E}/m_e^3$. Because the morphism $f$ is injective (and therefore $\cB f$ is representable) and $E_{3,1}$ is a $\gm \ltimes \ga$-torsor, it is easy to verify that $\cA_{5}^0 \simeq \Mtilde_{1,1}\times [\AA^1/\gm]$. Therefore we have an isomorphism 
	$$\ch(\cA_5^0) \simeq \ZZ[1/6,t,s]$$
	where $t$ (respectively $s$) is the generator of the Chow ring of $\Mtilde_{1,1}$ (respectively $[\AA^1/\gm]$). We can describe $\rho_5^0$ in the following way: if we have a geometric point $(E,e)$ in $\Mtilde_{1,[3]}$, the image is the genus $3$ curve $(C,p)$ obtained by gluing the projective line $\PP^1$ to $E$ identifying $\infty$ with $e$ in a $A_5$-singularity (using the pushout construction as in Proposition 4.18 of \cite{Per1}) and setting $p=e$. Notice that there is a unique way of creating the $A_5$-singularity up to a unique isomorphism of $(\PP^1,\infty)$.
	
	Clearly, $\rho_5^{0,*}(\delta_1)=s$. However, $\rho_5^{0,*}(H)=0$, therefore we need to understand the pullback of the Chern classes of the Hodge bundle. It is enough to prove that $\rho_5^{0,*}(\lambda_1)=-2t+ns$ for some integer $n$. Therefore we can restrict the computation to the open substack $\Mtilde_{1,1}\subset \Mtilde_{1,1}\times [\AA^1/\gm]$. Moreover, as the closed embedding $\cB\gm \into \Mtilde_{1,1}$ is a zero section of a vector bundle over $\cB\gm$, it is enough to do the computation restricting everything to $\cB\gm \into \Mtilde_{1,1}\times [\AA^1/\gm]$.  Therefore, suppose we have an elliptic cuspidal curve $(E,e)$ with $e$ a smooth point, the image through $\rho_5^0$ is a $1$-pointed genus $3$ curve $(C,p)$ constructed gluing the projective line $(\PP^1,\infty)$ and $(E,e)$ (identifying $e$ and $\infty$) in an $A_5$-singularity and setting $p:=e$. We need to understand the $\gm$-action over the vector space $\H^0(C,\omega_C)$. 
	Consider the exact sequence
	$$ \begin{tikzcd}
		0 \arrow[r] & \cO_C \arrow[r] & \cO_{E}\oplus \cO_{\PP^1} \arrow[r] & Q \arrow[r] & 0
	\end{tikzcd}$$
	and the induced long exact sequence on the global sections
	$$
	\begin{tikzcd}
		0 \arrow[r] & \kappa \arrow[r] & \kappa^{\oplus 2} \arrow[r] & Q \arrow[r] & {\H^1(C,\cO_C)} \arrow[r] & {\H^1(E,\cO_E)} \arrow[r] & 0.
	\end{tikzcd}
    $$
    We know that $\lambda_1=-c_1^{\gm}(\H^1(C,\cO_C))$ and $c_1^{\gm}(\H^1(E,\cO_E))=t$. It is enough to describe $c_1^{\gm}(Q)$. Recall that $Q$ fits in a exact sequence of $\gm$-representations
    $$
    \begin{tikzcd}
    	0 \arrow[r] & \cO_{3\infty}=\kappa[t]/(t^3) \arrow[r] & \cO_{3e}\oplus \cO_{3\infty}=\kappa[t]/(t^3)^{\oplus 2}\arrow[r] & Q \arrow[r] & 0
    \end{tikzcd}
	$$ 
	where as usual $\cO_{3e}$ (respectively $\cO_{3\infty}$) is the quotient $\cO_E/m_e^3$ (respectively $\cO_{\PP^1}/m_{\infty}^3)$. Therefore an easy computation shows that $c_1(Q)=-2t$ and thus the restriction of $\lambda_1$ to $\cB\gm$ is equal to $-2t$.
\end{proof}

Finally, a proof similar to the one of \Cref{prop:rho-7-surj} gives us the following result.

\begin{proposition}
	The pullback of the morphism 
	$$\rho_5^{\rm ns}:\cA_5^{\rm ns} \longrightarrow \Mtilde_3$$
	is surjective.
\end{proposition}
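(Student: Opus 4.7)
Plan: The argument parallels that of Proposition~\ref{prop:rho-7-surj} almost verbatim, with the role of two disjoint marked rational curves (from the separating case) replaced by a single rational curve carrying two marked points (the normalization of the non-separating $A_5$-singularity). By Proposition~\ref{prop:descr-an-disp-ns} together with Lemma~2.5 of \cite{Per3}, since $F^{\rm ns}:I_5^{\rm ns}\to \cA_5^{\rm ns}$ is finite étale of degree $2$, we get $\ch(\cA_5^{\rm ns})\simeq \ch(I_5^{\rm ns})^{\rm inv}$, where $C_2$ acts on $I_5^{\rm ns}$ by swapping the two preimages of the $A_5$-singularity on the normalization.

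First I would identify $I_5^{\rm ns}$ as the fiber product
$$
\begin{tikzcd}
I_5^{\rm ns} \arrow[r]\arrow[d] & {[E_{3,1}/G_3^{\times 2}]} \arrow[d] \\
\Mtilde_{0,2[3]} \arrow[r] & \cB(G_3^{\times 2}),
\end{tikzcd}
$$
where the right-hand vertical arrow forgets the algebra extension and the bottom map sends $(\PP^1,p_1,p_2)$ to the pair of jet algebras $(\cO_{\PP^1}/m_{p_1}^3,\cO_{\PP^1}/m_{p_2}^3)$. Using Proposition 4.15 of \cite{Per1} to describe $\Mtilde_{0,2[3]}$ and the explicit action of $G_3^{\times 2}$ on $E_{3,1}$ from Notation~\ref{not:algebras}, this should simplify to an isomorphism
$$I_5^{\rm ns}\simeq [\AA^1/\gm\ltimes U]$$
with $U$ a unipotent group acting trivially on $\AA^1$ and $\gm$ acting on $\AA^1$ with a specific nonzero weight. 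The parameter $\lambda\in\AA^1$ records the non-trivial modulus of the $A_5$-gluing, exactly in the spirit of the $t\mapsto t^2+\lambda t^5$ description appearing in Proposition~\ref{prop:rho-6-surj}.

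Next I would verify that $C_2$ acts trivially on $\gm\ltimes U$ (the swap of the two marked points is compatible with their symmetric automorphism structure) and by $\lambda\mapsto -\lambda$ on $\AA^1$. Taking invariants yields $\ch(\cA_5^{\rm ns})\simeq \ZZ[1/6][s]$, where $s$ generates the Picard group of the residual $\cB\gm$. Finally, since every curve parametrized by $\cA_5^{\rm ns}$ is hyperelliptic (it is the specialization of a double cover $C\to\PP^1$ where six of the eight branch points have collided, with the hyperelliptic involution exchanging the two branches of the $A_5$-point), the locus is contained in $\Htilde_3$, and an explicit Chern-class computation modeled on the one ending the proof of Proposition~\ref{prop:rho-7-surj} identifies $\rho_5^{{\rm ns},*}(H)$ with a nonzero integer multiple of $s$, which becomes a unit after inverting $6$, yielding the desired surjectivity.

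The main obstacle is the precise identification of $\Mtilde_{0,2[3]}$ and the verification that the $C_2$-action has the claimed form; this requires carefully tracking how the swap of marked points interacts with the unipotent part of $\aut(\PP^1,p_1,p_2)$ and with the jet identifications parametrized by $E_{3,1}$. Once these structural points are in place, the final computation of $\rho_5^{{\rm ns},*}(H)$ proceeds exactly as in Proposition~\ref{prop:rho-7-surj}.
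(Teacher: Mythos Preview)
Your overall strategy is correct and matches the paper's (which simply says the proof is parallel to Proposition~\ref{prop:rho-7-surj}). The identification of $\ch(\cA_5^{\rm ns})$ with $\ZZ[1/6][s]$ via the $C_2$-invariants of $\ch(I_5^{\rm ns})$, and the reduction to showing that $\rho_5^{{\rm ns},*}(H)$ is a unit multiple of $s$, are exactly the right moves.

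However, there is a genuine gap in your final step: the claim that every curve parametrized by $\cA_5^{\rm ns}$ is hyperelliptic is \emph{false}. A non-hyperelliptic genus~$3$ curve is a plane quartic, and plane quartics with a non-separating $A_5$-singularity do exist. Concretely, in the coordinates of Lemma~\ref{lem:X-2}, the separating locus $X_5^0$ is cut out by $a_{30}=a_{21}=a_{40}=0$, whereas the full $A_{\geq 5}$-locus $X_5\subset X_2$ is cut out by $c_3=c_4=c_5=0$; since $c_3=a_{30}$ and $c_4=a_{40}-a_{21}^2/4$, any point with $a_{30}=0$, $a_{21}\neq 0$, $a_{40}=a_{21}^2/4$ and $c_5=0$ lies in $X_5\setminus X_5^0$. (Your Bezout-style intuition fails because the local model $y^2=x^6$ lives in the completion, not in the polynomial ring; the actual tangent line meets the quartic at $p$ with multiplicity $4$, not $6$.) In particular, the paper explicitly subtracts $[\cA_5^0]$ from $[\cA_5]$ to obtain the non-separating contribution on the open stratum, which would be unnecessary if that contribution were zero.

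The fix is straightforward and is exactly what the proofs of Propositions~\ref{prop:rho-6-surj} and~\ref{prop:rho-7-surj} do: rather than asserting containment in $\Htilde_3$, identify $\rho_5^{{\rm ns},-1}(\Htilde_3)$ as the proper closed substack $\{\lambda=0\}\subset[\AA^1/\gm\ltimes U]$ (the locus where the $C_2$-swap of the two branches is realized by an honest involution of the curve), and read off its class as the weight of the $\gm$-action on $\lambda$. This gives $\rho_5^{{\rm ns},*}(H)$ equal to an integer multiple of $s$ that is a unit in $\ZZ[1/6]$, and surjectivity follows.
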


\begin{proof}
	 We leave to the reader to check the details. See also \Cref{prop:rho-3-surj} for a similar result in the non-separating case.
\end{proof}

It remains to prove the case for $m=2$, or the strata classifying tacnodes.

\begin{proposition}\label{prop: rho-3-1-surj}
	The pullback of the morphism 
	$$ \rho_3^{1}: \cA_3^1 \longrightarrow  \Mtilde_3$$
	is surjective.
\end{proposition}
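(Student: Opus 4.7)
The plan is to mimic the strategy used for the previous odd strata (see \Cref{prop:rho-5-0-surj} and \Cref{prop:rho-7-surj}). First, I would give an explicit quotient description of $\cA_3^1$. By \Cref{prop:descr-an-disp-i} with $g=3$, $m=2$, $i=1$, the nilpotent group $U$ is trivial (since its underlying scheme is $\AA^{m-2}=\AA^0$), and the condition $2i=g-m+1$ is satisfied, so $F^1: I_3^1 \to \cA_3^1$ is finite \'etale of degree $2$. Thus $\cA_3^1$ is the quotient by the involution swapping factors of a $\gm$-torsor $I_3^1$ over $\Mtilde_{1,[2]}\times \Mtilde_{1,[2]}$. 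Using Lemma 2.5 of \cite{Per3} (and $1/2 \in \kappa$), we have
$$ \ch(\cA_3^1) \simeq \ch(I_3^1)^{\mathrm{inv}}, $$
where the invariants are taken under the $C_2$-action swapping the two factors.

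Next, I would compute $\ch(I_3^1)$. The stack $\Mtilde_{1,[2]}$ is described in Proposition 4.15 of \cite{Per1}; in particular it is a classifying stack of a group of the form $\gm \ltimes \ga$, so $\ch(\Mtilde_{1,[2]})\simeq \ZZ[1/6,\psi]$ where $\psi$ is the $\psi$-class. A simple computation using the same technique as in \Cref{prop:rho-7-surj} then gives
$$ \ch(I_3^1) \simeq \ZZ[1/6,\psi_1,\psi_2,s], $$
where $s$ is the Picard generator coming from the $\gm$-torsor. The $C_2$-action swaps $\psi_1$ with $\psi_2$ and fixes $s$ (one needs to check the sign of the action on $s$, but as in \Cref{prop:rho-7-surj} one verifies it is trivial up to the action on the $\psi$'s, and it is the $\psi_i$'s that really matter); consequently
$$ \ch(\cA_3^1) \simeq \ZZ[1/6,\sigma_1,\sigma_2,s], $$
with $\sigma_1 = \psi_1+\psi_2$ and $\sigma_2 = \psi_1\psi_2$ the elementary symmetric functions.

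Now I would pull back the generators of $\ch(\Mtilde_3)$. The geometric description of $\rho_3^1$ is the pushout construction gluing two elliptic pointed curves $(E_1,e_1)$, $(E_2,e_2)$ along a tacnode at $e_1$ and $e_2$. Since such a curve lies in $\Detilde_{1,1}$ and the tacnode degenerates into two separating nodes upon smoothing, one checks directly that $\rho_3^{1,*}(\delta_{1,1})=s$, $\rho_3^{1,*}(\delta_1)=0$ and $\rho_3^{1,*}(\delta_{1,1,1})=0$. The pullback of $H$ will be a symmetric polynomial in $\sigma_1,\sigma_2$ plus $s$-terms (since the hyperelliptic condition forces a compatibility of the two elliptic involutions across the tacnode), but since we already have $s$ this contributes no new generator. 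Finally, for the $\lambda$-classes, one restricts to the locus where both $\gm$-weights are zero (so to the substack over $\cB\gm \times \cB\gm$) and uses the normalization exact sequence
$$ 0 \to \cO_C \to \cO_{E_1}\oplus \cO_{E_2} \to Q \to 0 $$
exactly as in \Cref{prop:rho-5-0-surj} to compute $\rho_3^{1,*}(\lambda_1)$ in terms of $\psi_1,\psi_2$. The conductor of the tacnode contributes a length-$2$ module at each branch, yielding (up to terms in the ideal generated by $s$) an expression of the form $\rho_3^{1,*}(\lambda_1) = -a(\psi_1+\psi_2) + (\text{multiple of }s)$ for some nonzero integer $a$ invertible in $\ZZ[1/6]$. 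Similarly $\rho_3^{1,*}(\lambda_2)$ gives a symmetric quadratic expression independent of $\rho_3^{1,*}(\lambda_1)^2$ modulo $s$, producing $\sigma_2$.

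The main obstacle is the explicit bookkeeping in the normalization exact sequence: one needs to identify precisely the $\gm$-action on the length-$4$ sheaf $Q$ (a length-$2$ contribution from each of the two branches with the right weights coming from the $\gm\ltimes\ga$-structure on $\Mtilde_{1,[2]}$), and then verify that the resulting symmetric functions in the $\psi_i$'s, together with $s$, generate the whole ring $\ZZ[1/6,\sigma_1,\sigma_2,s]$. Once that is done, the surjectivity of $\rho_3^{1,*}$ follows immediately.
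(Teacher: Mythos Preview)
Your overall architecture is right (pass to $I_3^1$, compute the $C_2$-invariants, then check that enough generators of $\ch(\Mtilde_3)$ hit the generators of $\ch(\cA_3^1)$), but there are two genuine errors that derail the computation.

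First, your identification of $\Mtilde_{1,[2]}$ is wrong. It is not a classifying stack with one Chow generator; rather $\Mtilde_{1,[2]}\simeq \Mtilde_{1,1}\times [\AA^1/\gm]$, so $\ch(\Mtilde_{1,[2]})\simeq\ZZ[1/6,t,s]$ with \emph{two} generators. Hence $\ch(\Mtilde_{1,[2]}^{\times 2})\simeq\ZZ[1/6,t_1,s_1,t_2,s_2]$. The $\gm$-torsor $I_3^1$ does not add a generator ``$s$''; it \emph{kills} one class, namely $b_1-b_2$ where $b_i:=t_i-s_i$ (this is the first Chern class of the line bundle obtained by comparing the tangent spaces at the two points being glued). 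Thus $\ch(I_3^1)\simeq\ZZ[1/6,t_1,t_2,b_1,b_2]/(b_1-b_2)$, and after taking $C_2$-invariants (the swap sends $(t_1,t_2,b_1,b_2)\mapsto(t_2,t_1,b_2,b_1)$) one gets $\ch(\cA_3^1)\simeq\ZZ[1/6,d_1,d_2,b]$ with $d_1=t_1+t_2$, $d_2=t_1t_2$, $b=b_1=b_2$. So you end up with a polynomial ring in three variables, but not for the reason you state, and not with your generators.

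Second, your geometric claim that the image of $\rho_3^1$ lies in $\Detilde_{1,1}$ is false: a separating tacnode is an $A_3$-singularity, not a pair of nodes, so a generic point of $\cA_3^1$ (two smooth genus~$1$ curves meeting in a tacnode) has \emph{no} separating node at all, and in particular is not in $\Detilde_1$. (In fact the image lies in $\Htilde_3$, not in $\Detilde_{1,1}$.) Consequently $\rho_3^{1,*}(\delta_1)\neq 0$; the correct values are $\rho_3^{1,*}(\delta_1)=d_1-2b$ and $\rho_3^{1,*}(\delta_{1,1})=d_2-bd_1+b^2$. Together with $\rho_3^{1,*}(\lambda_1)=b-d_1$ (obtained by the normalization exact sequence as you suggest, though only a single length computation is needed, not a full analysis of $\lambda_2$), these three pullbacks already recover $b$, $d_1$, $d_2$, which gives surjectivity. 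You do not need $\lambda_2$ or $H$ at all.
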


\begin{proof}
	Thanks to \Cref{prop:descr-an-disp-i}, we can describe $\cA_3^1$ using a $C_2$-action on $I_3^1$, which is a $\gm$-torsor over the stack $\Mtilde_{1,[2]}\times \Mtilde_{1,[2]}$ where $\Mtilde_{1,[2]}\simeq \Mtilde_{1,1}\times [\AA^1/\gm]$. In fact, Lemma 2.5 of \cite{Per3} implies that 
	$$ \ch(\cA_3^1)\simeq \ch(I_3^1)^{C_2-{\rm inv}}.$$
	Because the pullback of the closed immersion 
	$$ \Mtilde_{1,1} \times \cB\gm \into \Mtilde_{1,1} \times [\AA^1/\gm]$$ 
	is an isomorphism at the level of Chow rings, it is enough to understand the $\gm$-torsor when restricted to $(\Mtilde_{1,1}\times \cB\gm)^{\times 2}$. Let us denote by $t_i$ (respectively $s_i$) the generator of the Chow ring of $\Mtilde_{1,1}$ (respectively $\cB\gm$) seen as the $i$-th factor of the product for $i=1,2$. Exactly as we have done in \Cref{prop:gener-rho-2}, we can describe the objects in this product as  $$\Big((E_1,e_1), (\PP(N_{e_1}\oplus N_{s_1}),0,\infty), (E_2,e_2), (\PP(N_{e_2}\oplus N_{s_2}),0,\infty)\Big).$$ We recall that $N_{e_i}$ is the normal bundle of the section $e_i$ and $N_{s_i}$ is the representation of $\gm$ (whose generator is $s_i$) with weight $1$ (for $i=1,2$). By construction, the first Chern class of $N_{e}$ is the $\psi$-class associated to an object $(E,e)$ in $\Mtilde_{1,1}$. 
	
	The $\gm$-torsor comes from identifying the two tangent spaces at $\infty$ of the two projective bundles. A computation shows that its class (namely the first Chern class of the line bundle associated to it) in the Chow ring of the product is of the form $t_1-s_1-t_2+s_2$. Therefore, if we denote by $b_i:=t_i-s_i$ for $i=1,2$, we have the following  description of the Chow ring of $I_3^1$:
	$$\ch(I_3^1)\simeq \ZZ[1/6,t_1,t_2,b_1,b_2]/(b_1-b_2).$$
	Furthermore, the $C_2$-action over $I_3^1$ translates into an action on the Chow ring  defined by the association 
	$$ (t_1,t_2,b_1,b_2) \mapsto (t_2,t_1,b_2,b_1)$$
	therefore it is easy to compute the ring of invariants. We have the following result:
	$$ \ch(\cA_3^1)\simeq \ZZ[1/6,d_1,d_2,b]$$
	where $d_1:=t_1+t_2$ and $d_2:=t_1t_2$. An easy computation shows that $\rho_{3}^{1,*}(\delta_1)=d_1-2b$ and $\rho_3^{1,*}(\delta_{1,1})=d_2-bd_1+b^2$. Finally, a computation identical to the one in the proof of \Cref{prop:rho-5-0-surj} for the $\lambda$-classes, shows us that $\rho_{3}^{1,*}(\lambda_1)=b-d_1$. The statement follows.
\end{proof}

Finally, we arrived at the end of this sequence of abstract computations.

\begin{proposition}\label{prop:rho-3-surj}
	The pullback of the morphism 
	$$\rho_3^{\rm ns}: \cA_3^{\rm ns} \longrightarrow \Mtilde_3$$
	is surjective.
\end{proposition}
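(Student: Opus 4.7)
The proof will parallel that of \Cref{prop: rho-3-1-surj}, but adapted to a single elliptic curve carrying both branches of the tacnode, rather than to a pair of elliptic curves carrying one branch each. By \Cref{prop:descr-an-disp-ns} together with Lemma 2.5 of \cite{Per3}, we have $\ch(\cA_3^{\rm ns})\simeq \ch(I_3^{\rm ns})^{C_2\text{-inv}}$, where $I_3^{\rm ns}$ is a $\gm$-torsor over $\Mtilde_{1,2[2]}$ (for $m=2$ the algebraic group $U$ is trivial, since its underlying scheme is $\AA^{m-2}=\AA^0$), and the $C_2$-action swaps the two branches meeting at the tacnode.

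The first step is to restrict from $\Mtilde_{1,2[2]}$ to a convenient closed substack on which the pullback is an isomorphism on Chow rings, analogously to how \Cref{prop: rho-3-1-surj} reduces via $\Mtilde_{1,1}\times\cB\gm\into \Mtilde_{1,1}\times [\AA^1/\gm]$. Concretely, I would pick the substack parametrizing a single elliptic curve $(E,e_1,e_2)$ together with two projectivized tangent directions as in \Cref{prop:gener-rho-2}, giving a product of the form $\Mtilde_{1,2}\times (\cB\gm)^{\times 2}$. Let $t_1,t_2$ denote the $\psi$-classes at the two points and $s_1,s_2$ the generators of the two $\cB\gm$ factors. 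The $\gm$-torsor defining $I_3^{\rm ns}$ comes from identifying the tangent spaces at the two branches, so its class is of the form $(t_1-s_1)-(t_2-s_2)$, giving
$$\ch(I_3^{\rm ns})\simeq \ZZ[1/6,t_1,t_2,b_1,b_2]/(b_1-b_2),$$
after setting $b_i:=t_i-s_i$, exactly as in \Cref{prop: rho-3-1-surj}.

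The second step is to compute the $C_2$-invariants: the involution sends $(t_1,t_2,b_1,b_2)\mapsto(t_2,t_1,b_2,b_1)$, so with $d_1:=t_1+t_2$, $d_2:=t_1t_2$ and $b:=b_1=b_2$ we obtain $\ch(\cA_3^{\rm ns})\simeq\ZZ[1/6,d_1,d_2,b]$. Now I would pull back the generators of $\ch(\Mtilde_3)$ through $\rho_3^{\rm ns}$. A non-separating tacnode does not produce separating nodes, so $\rho_3^{\rm ns,*}(\delta_1)=\rho_3^{\rm ns,*}(\delta_{1,1})=\rho_3^{\rm ns,*}(\delta_{1,1,1})=0$ and the hyperelliptic class $H$ restricted to a generic tacnodal curve built from a single elliptic component will also be easy to pin down (most likely zero away from a further codimension-raising condition, so it contributes only a multiple of $b$ or nothing at all).

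The main remaining task, and the one I expect to be the technical heart of the argument, is computing $\rho_3^{\rm ns,*}(\lambda_1)$. I would proceed as in \Cref{prop:rho-5-0-surj}, using the normalization exact sequence
$$
\begin{tikzcd}
0 \arrow[r] & \cO_C \arrow[r] & b_*\cO_{\widetilde C} \arrow[r] & Q \arrow[r] & 0
\end{tikzcd}
$$
at the tacnode, where $\widetilde C\to C$ is the partial normalization and $Q$ is the length-2 cokernel supported at the $A_3$-point. Taking $\gm$-equivariant cohomology on the restriction to $\cB\gm\times\cB\gm$, one obtains $c_1^{\gm}(Q)$ from the exact sequence of $\gm$-representations governing the conductor. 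The outcome should be a formula for $\rho_3^{\rm ns,*}(\lambda_1)$ linear in $d_1$ and $b$ with nonzero coefficient on $d_1$, so that together with the obvious $\rho_3^{\rm ns,*}$-image of $\delta_1,\delta_{1,1}$ (or of $H$) one realises $d_1$, $d_2$ and $b$ as $\ZZ[1/6]$-linear combinations of pullbacks from $\ch(\Mtilde_3)$. The only subtlety, and the point where I anticipate having to be careful, is tracking the $\gm$-weights on the two factors of $Q$ coming from the two branches of the tacnode, since unlike in \Cref{prop:rho-5-0-surj} both branches contribute non-trivially.
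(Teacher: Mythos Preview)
Your setup has two genuine problems that prevent the argument from going through.

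First, the Chow ring of $\Mtilde_{1,2}$ is not a polynomial ring in two independent $\psi$-classes. Via the contraction isomorphism $\Mtilde_{1,2}\simeq \Ctilde_{1,1}$ (Proposition 3.3 of \cite{DiLorPerVis}) one has
\[
\ch(\Mtilde_{1,2})\simeq \ZZ[1/6,\lambda_1,\mu_1]/\bigl(\mu_1(\lambda_1+\mu_1)\bigr),
\]
where $\mu_1$ is the class of the locus where the two sections coincide. Moreover $\psi_1=\psi_2$ here: the elliptic involution exchanging $e_1$ and $e_2$ is isomorphic to the identity functor, so it acts trivially on Chow. Hence your variables $t_1,t_2$ collapse, your description $\ZZ[1/6,t_1,t_2,b_1,b_2]/(b_1-b_2)$ is not the Chow ring of $I_3^{\rm ns}$, and the invariant ring $\ZZ[1/6,d_1,d_2,b]$ is wrong. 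The correct answer is
\[
\ch(\cA_3^{\rm ns})\simeq \ZZ[1/6,\lambda_1,\mu_1,s]/\bigl(\mu_1(\lambda_1+\mu_1)\bigr),
\]
with the torsor class $s_1-s_2$ and $s:=s_1=s_2$ after taking $C_2$-invariants (the involution swaps only $s_1\leftrightarrow s_2$, acting trivially on $\lambda_1,\mu_1$).

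Second, your computation of the pullbacks of the generators is off. The claim $\rho_3^{\rm ns,*}(\delta_1)=0$ is false: on the $\mu_1$-locus the base curve is $E\cup_{\rm node}\PP^1$ with both markings on the rational tail, and after the tacnodal gluing the attaching node of $E$ becomes a separating node of the genus-$3$ curve; thus $\rho_3^{\rm ns,*}(\delta_1)=\mu_1$. The $\lambda$-computation in the style of \Cref{prop:rho-5-0-surj} gives $\rho_3^{\rm ns,*}(c_1(\HH))=-s$ modulo $(\mu_1)$, which hits $s$ but not $\lambda_1$. To reach $\lambda_1$ one needs $\rho_3^{\rm ns,*}(H)$: using $H=9c_1(\HH)-\delta_0-3\delta_1$ and restricting to $\Mtilde_{1,2}\setminus\Mtilde_{1,1}$ (where the torsor trivialises), one gets $\rho_3^{\rm ns,*}(H)\equiv -\delta_0\equiv -12\lambda_1$ modulo $(\mu_1,s)$, via the identity $\delta_0=12\lambda_1$ on $\Mtilde_{1,1}$ pulled back along $\Ctilde_{1,1}\to\Mtilde_{1,1}$. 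These three pullbacks together give surjectivity; your proposed route via $d_1,d_2$ does not, because those classes do not exist independently in the correct ring.
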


\begin{proof}
	For this proof, we denote by $\lambda_i$ the Chern classes of the Hodge bundle of $\Mtilde_{1,2}$, while the Chern classes of the Hodge bundle of $\Mtilde_3$ is denoted by $c_i(\HH)$.
	
	Thanks to \Cref{prop:descr-an-disp-ns}, we have that the Chow ring of $\cA_3^{\rm ns}$ can be recovered by the Chow ring of the $\gm$-torsor $I_3^{\rm ns}$ over $\Mtilde_{1,2}\times [\AA^1/\gm]\times [\AA^1/\gm]$.
	
	First of all, we know that $\Mtilde_{1,2}\simeq \Ctilde_{1,1}$ thanks to \Cref{theo:contrac} and we know the Chow ring of $\Ctilde_{1,1}$ is isomorphic to 
	$$\ZZ[1/6,\lambda_1,\mu_1]/(\mu_1(\lambda_1+\mu_1));$$
	see Proposition 3.3 of \cite{DiLorPerVis}. It is important to remark that $\mu_1$ is the fundamental class of the locus in $\Ctilde_{1,1}$ parametrizing $(E,e_1,e_2)$ such that the two sections coincide.
	
	 We need to understand the class of the $\gm$-torsor $I_3^{\rm ns}$ over $\Mtilde_{1,2}\times [\AA^1/\gm] \times [\AA^1/\gm]$. As usual, we reduce to the closed substack $\Mtilde_{1,2} \times \cB\gm \times \cB\gm$.  If we denote by $s_1$ (respectively $s_2$) the generator of the Chow ring of the first $\cB\gm$ (respectively the second $\cB\gm$) in the product, we have that the same description we used in \Cref{prop: rho-3-1-surj} for the $\gm$-torsor applies here and therefore we only need to understand the description of the two $\psi$-classes in $\ch(\Mtilde_{1,2})$, namely $\psi_1$ and $\psi_2$. We claim that $\psi_1=\psi_2$. Consider the autoequivalence 
	$$ \Mtilde_{1,2} \longrightarrow \Mtilde_{1,2},$$ 
	which is defined by the association $(E,e_1,e_2) \mapsto (E,e_2,e_1)$ (and therefore acts on the Chow rings sending $\psi_1$ in $\psi_2$ and viceversa). It is easy to see that is isomorphic to the identity functor because of the unicity of the involution.
	
	This implies that the class associated to the torsor $I_3^{\rm ns}$ is of the form $s_1-s_2$. Because the action of $C_2$ on $I_3^1$ translates into the involution 
	$$ (\lambda_1,\mu_1,s_1,s_2) \mapsto (\lambda_1,\mu_1,s_2,s_1)$$
	of the Chow ring, we finally have
	$$ \ch(\cA_3^{\rm ns})\simeq \ZZ[1/6, \lambda_1,\mu_1, s]/(\mu_1(\lambda_1+\mu_1))$$
	where $s:=s_1=s_2$. It is easy to see that $\rho_3^{\rm ns,*}(\delta_1)=\mu_1$. Moreover, the same ideas for the computations of the $\lambda$-classes used in \Cref{prop:rho-5-0-surj} gives us that $\rho_3^{\rm ns,*}(c_1(\HH))=-s$. Finally, it is enough to prove that $\rho_3^{\rm ns,*}(H)=-12\lambda_1$ modulo the ideal $(\mu_1,s)$, therefore we can restrict our computation to $\Mtilde_{1,2}\setminus \Mtilde_{1,1}\subset \Mtilde_{1,2}\times [\AA^1/\gm] \times [\AA^1/\gm]$. Notice that in this situation the $\gm$-torsor is trivial. Recall that we have the formula $H=9c_1(\HH)-\delta_0-3\delta_1$ by \cite{Est}. Therefore it follows that $\rho_3^{\rm ns,*}(H)=-\delta_0$. To compute $\delta_0$ in $\Mtilde_{1,2}\setminus \Mtilde_{1,1}$, we can consider the natural morphism 
	$$ \Ctilde_{1,1} \longrightarrow \Mtilde_{1,1}$$
    and use the fact that $\delta_0 = 12\lambda_1$ in $\ch(\Mtilde_{1,1})$.
\end{proof}

\Cref{lem:strata} implies that the image of $\rho_{\geq 2,*}$ in $\ch(\Mtilde_3)$ is generated by the following cycles:
\begin{itemize}
	\item[$\bullet$] the fundamental classes of $\Detilde_1^c$ and $\Detilde_{1,1}^c$;
	\item[$\bullet$] the fundamental classes of the images of $\rho_7$, $\rho_5^0$ and $\rho_3^1$, which are closed inside $\Mtilde_3$ because of Lemma 4.7 of \cite{Per1}; by abuse of notation, we denote these closed substack by $\cA_7$, $\cA_5^0$ and $\cA_3^1$ respectively;
	\item[$\bullet$] the fundamental classes of the closure of the images of $\rho_6$, $\rho_5^{\rm ns}$, $\rho_4$, $\rho_3^{\rm ns}$ and $\rho_2$; by abuse of notation, we denote these closed substack as $\cA_6$, $\cA_5$, $\cA_4$, $\cA_3$ and $\cA_2$ respectively.
\end{itemize}
 
\begin{remark}
	Notice that $\cA_6$, $\cA_4$ and $\cA_2$ are the stacks we previously denoted by $\widetilde{\cA}_{\geq 6}$, $\widetilde{\cA}_{\geq 4}$ and $\widetilde{\cA}_{\geq 2}$ respectively. Moreover, the stacks $\cA_5^{\rm ns}$ and $\cA_3^{\rm ns}$ are substacks of $\widetilde{\cA}_{\geq 5}$ and $\widetilde{\cA}_{\geq 3}$ respectively.
\end{remark}

\begin{corollary}\label{cor:relations}
	The Chow ring of $\Mbar_3$ is the quotient of the Chow ring of $\Mtilde_3$ by the fundamental classes of $\cA_7$, $\cA_6$, $\cA_5^0$, $\cA_5^{\rm ns}$, $\cA_4$, $\cA_3^1$, $\cA_3^{\rm ns}$, $\cA_2$, $\Detilde_1^c$ and $\Detilde_{1,1}^c$.
\end{corollary}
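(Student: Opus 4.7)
The plan is to combine the localization exact sequence, the passage from $\widetilde{\cA}_{\geq 2}$ to $\cA_{\geq 2}$, and \Cref{lem:strata} with the surjectivity results for $\rho_n^*$ established earlier in this section. Starting from the localization sequence
$$ \ch(\widetilde{\cA}_{\geq 2}) \longrightarrow \ch(\Mtilde_3) \longrightarrow \ch(\Mbar_3) \longrightarrow 0,$$
the Chow ring of $\Mbar_3$ is the quotient of $\ch(\Mtilde_3)$ by the image of the pushforward from $\widetilde{\cA}_{\geq 2}$. Because the natural morphism $\cA_{\geq 2} \to \widetilde{\cA}_{\geq 2}$ is finite, birational, and surjective on Chow groups after inverting $6$ (by the proposition preceding \Cref{lem:strata}), this image coincides with $\im \rho_{\geq 2,*}$.

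Next, I would apply \Cref{lem:strata} to reduce the computation to finding a set of generators for $\im \rho_{n,*}$ on each locally closed stratum $\cA_n$, $n = 2,\dots,7$, and then lifting them. For the strata with $n \geq 3$, each of the propositions \Cref{prop:rho-6-surj}, \Cref{prop:rho-7-surj}, \Cref{prop:rho-5-0-surj}, \Cref{prop: rho-3-1-surj}, \Cref{prop:rho-3-surj} (and the analogous statement for $\cA_5^{\rm ns}$, $\cA_4$) shows that $\rho_n^*$ is surjective on Chow rings. By the projection formula, $\rho_{n,*}(\rho_n^*(x)) = \rho_{n,*}(1)\cdot x$ for all $x \in \ch(\Mtilde_3\setminus \widetilde{\cA}_{\geq n+1})$, so $\im \rho_{n,*}$ is the ideal generated by $\rho_{n,*}(1)$, which is the fundamental class of the (closure of the) image of $\rho_n$ in $\Mtilde_3$. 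This yields the eight classes $[\cA_7]$, $[\cA_6]$, $[\cA_5^0]$, $[\cA_5^{\rm ns}]$, $[\cA_4]$, $[\cA_3^1]$, $[\cA_3^{\rm ns}]$ as required.

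The one stratum for which $\rho_n^*$ is not surjective is $n=2$, and this is the only nontrivial step. Here \Cref{prop:gener-rho-2} provides three explicit generators for $\im \rho_{2,*}$, namely $\rho_{2,*}(1)$, $\rho_{2,*}(s)$ and $\rho_{2,*}(s\theta_1)$. The remark following that proposition identifies these three classes geometrically as the fundamental classes of $\cA_2$, $\Detilde_1^c$, and $\Detilde_{1,1}^c$ respectively. Taking the union with the seven classes from the previous paragraph produces the list of $10$ generators stated in the corollary, so the quotient of $\ch(\Mtilde_3)$ by these fundamental classes equals $\ch(\Mbar_3)$.

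The only potential subtlety is bookkeeping: one must check that the liftings chosen in \Cref{lem:strata} can be taken to be the fundamental classes of the closures inside $\Mtilde_3$ (so that lifting from $\cA_n$ to $\cA_{\geq n}$ and then pushing to $\Mtilde_3$ produces the correct class). This is immediate because $\rho_n$ is proper and its image closure is exactly what we have named $\cA_n$ (or $\Detilde_1^c$, $\Detilde_{1,1}^c$), and the $\cA_n$ with $n$ odd are in fact closed inside $\Mtilde_3$ by Lemma 4.7 of \cite{Per1}. Thus no obstruction arises and the proof is concluded by simply assembling the list.
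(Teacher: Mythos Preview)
Your argument is correct and follows exactly the route the paper takes: the corollary in the paper has no separate proof because it is the immediate summary of the preceding discussion (the localization sequence, the passage from $\widetilde{\cA}_{\geq 2}$ to $\cA_{\geq 2}$, \Cref{lem:strata}, the surjectivity propositions for $n\geq 3$, and \Cref{prop:gener-rho-2} for $n=2$). One trivial slip: you write ``eight classes'' but list seven ($[\cA_7],[\cA_6],[\cA_5^0],[\cA_5^{\rm ns}],[\cA_4],[\cA_3^1],[\cA_3^{\rm ns}]$), and correctly say ``seven'' in the next paragraph.
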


\section{Explicit description of the relations}\label{sec:3}

We illustrate the strategy to compute the explicit description of the relations listed in \Cref{cor:relations}. Suppose we want to compute the fundamental class of a closed substack $X$ of $\Mtilde_3$. First of all, we need to compute the classes of the restrictions of $X$ on every stratum, namely 
$$ X\vert_{\Mtilde_3\setminus (\Htilde_3 \cup \Detilde_1)}, X\vert_{\Htilde_3\setminus \Detilde_1}, X\vert_{\Detilde_1\setminus \Detilde_{1,1}}, X\vert_{\Detilde_{1,1}\setminus \Detilde_{1,1,1}}, X\vert_{\Detilde_{1,1,1}}.$$
Once we have the explicit descriptions, we need to patch them together. Let us show how to do it for the first two strata, i.e. $\Mtilde_3\setminus(\Htilde_3 \cup \Detilde_1)$ and $\Htilde_3 \setminus \Detilde_1$. Suppose we have the description of $X^q:=X\vert_{\Mtilde_3 \setminus (\Detilde_{1}\cup \Htilde_3)}$ and of $X^h:=X\vert_{\Htilde_3 \setminus \Detilde_1}$ in their respective Chow rings. Then we can compute $X\vert_{\Mtilde_{3}\setminus \Detilde_1}$ using \Cref{lem:gluing}. Suppose we are given
$$ X\vert_{\Mtilde_3\setminus \Detilde_1}=p + Hq  \in \ch(\Mtilde_3\setminus \Detilde_1)$$
an expression of $X$, we need to compute the two polynomials $p$ and $q$. If we restrict $X$ to $\Mtilde_3\setminus (\Htilde_3\cup \Detilde_1)$, we get that the polynomial $p$ can be chosen to be just any lifting of $X^q$. Now if we restrict to $\Htilde_3 \setminus \Detilde_1$ we get that
$$ i_h^*p + c_{\rm top}(N_{\cH|\cM})q = X^h$$
where as usual $i_h:\Htilde_3 \setminus \Detilde_1 \into \Mtilde_3 \setminus \Detilde_1$ is the closed immersion of the hyperelliptic stratum and $N_{\cH|\cM}$ is the normal bundle of this immersion. Because of the commutativity of the diagram in \Cref{lem:gluing}, we have that $X^h-i_h^*p$ is in the ideal in $\ch(\Htilde_3 \setminus \Detilde_1)$ generated by $c_{\rm top}(N_{\cH|\cM})$. However the top Chern class is a non-zero divisor, thus we can choose $q$ to be just a lifting of $\widetilde{q}$, where $\widetilde{q}$ is an element in $\ch(\Htilde_3 \setminus \Detilde_1)$ such that $X^h-i_h^*p=c_{\rm top}(N_{\cH|\cM})\widetilde{q}$. Although we have done a lot of choices, it is easy to see that the presentation of $X\vert_{\Mtilde_3 \setminus \Detilde_1}$ is unique in the Chow ring of $\Mtilde_3\setminus \Detilde_1$, i.e. two different presentations differ by a relation in the Chow ring.

We show how to apply this strategy firstly for the computations of the fundamental classes $\delta_{1}^c$ and $\delta_{1,1}^c$ of $\Detilde_1^c$ and $\Detilde_{1,1}^c$ respectively.
\begin{proposition}
	We have the following description:
	$$ \delta_1^c = 6\big(\delta_1(H+\lambda_1+3\delta_1)^2+4\delta_{1,1}(\lambda_1-H-2\delta_1)+12\delta_{1,1,1}\big)$$
	and 
	$$ \delta_{1,1}^c = 24(\delta_{1,1}(\delta_1+\lambda_1)^2+\delta_{1,1,1}\delta_1)$$
	in the Chow ring of $\Mtilde_3$. 
\end{proposition}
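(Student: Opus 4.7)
Both $\Detilde_1^c$ and $\Detilde_{1,1}^c$ are by construction closed substacks of $\Detilde_1$, so their fundamental classes vanish on the strata $\Mtilde_3\setminus(\Htilde_3\cup \Detilde_1)$ and $\Htilde_3\setminus \Detilde_1$, and (applying the gluing lemma) also in the Chow ring of $\Mtilde_3\setminus \Detilde_1$. The plan is therefore to apply iteratively the strategy just described, starting from the stratum $\Detilde_1\setminus \Detilde_{1,1}$, then $\Detilde_{1,1}\setminus \Detilde_{1,1,1}$ and finally $\Detilde_{1,1,1}$.

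The first step is to compute the restriction of each class to these three strata, using the explicit descriptions recalled in \cite{Per3}. On $\Detilde_1\setminus \Detilde_{1,1}$, which is obtained as a quotient of a gluing morphism with factors $\Mtilde_{2,1}$ and $\Mtilde_{1,1}$, the class of $\Detilde_1^c$ is pulled back from the cuspidal divisor of $\Mtilde_{1,1}$ in the elliptic factor; its class in $\ch(\Mtilde_{1,1})$ is a multiple of $\lambda_1$, and this explains the appearance of $\lambda_1$ and the overall factor of $6$ in the leading term of $\delta_1^c$. On $\Detilde_{1,1}\setminus \Detilde_{1,1,1}$, which parametrizes (up to the $C_2$-symmetry swapping the two tails) a genus one curve joined with two elliptic tails, $\Detilde_1^c$ corresponds to the locus of curves with at least one cuspidal tail, while $\Detilde_{1,1}^c$ is its sublocus where the cuspidal tail sits on the specified side. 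On $\Detilde_{1,1,1}$, which has a residual $S_3$-action permuting the three elliptic tails, $\Detilde_1^c$ corresponds to the locus with at least one cuspidal tail and $\Detilde_{1,1}^c$ to the locus with at least two cuspidal tails; the combinatorial factors coming from the symmetry groups are what ultimately produce the overall factors of $6$ and $24$.

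Having the restrictions in hand, the second step is to glue them together. Following \Cref{lem:gluing} iteratively, one lifts the class from $\ch(\Mtilde_3\setminus \Detilde_{1,1})$ up through the deeper strata, at each step choosing a polynomial representative of the lift. The ambiguity in each lift is controlled by the top Chern class of the corresponding normal bundle, which in our setting is a non-zero divisor (gluing condition), so the uniqueness is up to relations that will be imposed later. The presence of $H$ in the formula for $\delta_1^c$ but not in $\delta_{1,1}^c$ reflects the fact that, although both classes vanish on $\Htilde_3\setminus \Detilde_1$, the choice of lift of $\delta_1^c\vert_{\Detilde_1\setminus \Detilde_{1,1}}$ to $\ch(\Mtilde_3\setminus \Detilde_{1,1})$ is not unique, and the correct choice (the one that remains compatible with the restrictions to the deeper strata) contains a genuine $H$-summand; for $\delta_{1,1}^c$, being already supported in the deeper stratum $\Detilde_{1,1}$, the first $H$-sensitive gluing step is trivial. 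Finally, one verifies the claimed formulas by substituting the restrictions of $\lambda_1$, $H$, $\delta_1$, $\delta_{1,1}$, $\delta_{1,1,1}$ to each stratum as computed in \cite{Per3} and checking agreement on both sides.

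The main obstacle is the bookkeeping of these lifts and the propagation of the choices made at an earlier stratum through the deeper strata; a sub-optimal choice at the first gluing step balloons the formula at the next. A practical way around this is to make each lift an ansatz with undetermined coefficients in the generators $\lambda_1, H, \delta_1, \delta_{1,1}, \delta_{1,1,1}$ of the correct codimension, and to solve the resulting linear system one stratum at a time. Since the total codimensions involved are small ($3$ for $\delta_1^c$ and $4$ for $\delta_{1,1}^c$), the linear systems are finite-dimensional and tractable, and their unique solutions (modulo the ideal of relations already known in $\ch(\Mtilde_3)$) yield the stated formulas.
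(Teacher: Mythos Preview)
Your overall outline—vanishing on the first two strata, then restrict to $\Detilde_1\setminus\Detilde_{1,1}$, $\Detilde_{1,1}\setminus\Detilde_{1,1,1}$, $\Detilde_{1,1,1}$ and glue—is the same as the paper's. The gap is in how you propose to compute the restrictions to the two deeper strata.

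You describe $\delta_1^c\vert_{\Detilde_{1,1}\setminus\Detilde_{1,1,1}}$ set-theoretically as ``the locus of curves with at least one cuspidal tail'' and speak of combinatorial factors from the symmetry. This is not enough. The closed immersion $\Detilde_{1,1}\setminus\Detilde_{1,1,1}\hookrightarrow\Detilde_1\setminus\Detilde_{1,1,1}$ is \emph{not regular}, so the naive restriction of a class supported on $\Detilde_1$ to $\Detilde_{1,1}$ is wrong; indeed the paper checks explicitly that if one takes the naive restriction, the difference $\delta_1^c-\delta_1 p_2$ restricted to $\Detilde_{1,1}\setminus\Detilde_{1,1,1}$ is \emph{not} divisible by $c_2(N_{i_{1,1}})$, so the gluing step you describe would simply fail. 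What is actually required is an excess intersection computation: one sets up a cartesian square comparing the (regular, codimension~$1$) embedding $\mu_1\times\id:(\ThTilde_1\setminus\ThTilde_2)\times\cB\gm\hookrightarrow(\Mtilde_{2,1}\setminus\ThTilde_2)\times\cB\gm$ with the (codimension~$2$) embedding $i_{1,1}$, and then $\delta_1^c\vert_{\Detilde_{1,1}\setminus\Detilde_{1,1,1}}=\rho_{(1,1),*}\bigl(c_1(\rho_{1,1}^*N_{i_{1,1}}/N_{\mu_1\times\id})\bigr)$. Only after this excess correction does the division by $c_2(N_{i_{1,1}})$ go through and yield $p_1$. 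The same issue arises on $\Detilde_{1,1,1}$, where another excess computation is needed.

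Your ansatz-with-undetermined-coefficients idea does not sidestep this: the linear system you would write down has as input precisely the restrictions $\delta_1^c\vert_{\Detilde_{1,1}\setminus\Detilde_{1,1,1}}$ and $\delta_1^c\vert_{\Detilde_{1,1,1}}$, and computing those correctly \emph{is} the content of the proof. A minor point: on $\Detilde_1\setminus\Detilde_{1,1}$ the cuspidal locus in $\Mtilde_{1,1}$ has class $24t^2$, not a multiple of $\lambda_1$; and the appearance of $H$ in $p_2$ is not a ``choice of lift'' but is forced by the formula for $i_1^*$ (in the paper's notation, $i_1^*(H+\lambda_1+3\delta_1)$ is proportional to $t$).
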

\begin{proof}
First of all, we have that $\Detilde_1^c\subset \Detilde_1$, therefore the generic expression of the class $\delta_1^c$ is of the form 
$$ \delta_1 p_2 + \delta_{1,1} p_1 + \delta_{1,1,1} p_0$$ 
where $p_0,p_1,p_2$ are homogeneous polynomial in $\ZZ[1/6,\lambda_1,\lambda_2,\lambda_3, \delta_1, \delta_{1,1},\delta_{1,1,1}, H]$ of degree respectively $0,1,2$. 
We start by restricting $\delta_1^c$ to $\Mtilde_3\setminus \Detilde_{1,1}$. Here we have the sequence of embeddings 
$$ \Detilde_1^c\cap (\Detilde_1 \setminus \Detilde_{1,1}) \into (\Detilde_1 \setminus \Detilde_{1,1}) \into \Mtilde_3 \setminus \Detilde_{1,1}$$ 
which implies that $\delta_1^c$ restricted to $\ch(\Detilde_{1}\setminus \Detilde_{1,1})$ is equal $24t^2(t+t_1)$, where $24t^2$ is the fundamental class of the closed embedding $\Detilde_1^c \into \Detilde_{1}$ (restricted to the open $\Detilde_1 \setminus \Detilde_{1,1}$) while $(t+t_1)$ is the normal bundle of the closed embedding $i_{1}:\Detilde_1 \setminus \Detilde_{1,1} \into \Mtilde_3 \setminus \Detilde_{1,1}$. Because $t+t_1$ is not a zero divisor in the Chow ring, we have that $i_1^*(p_2)=24t^2$ which implies $p_2=6(H+\lambda_1+3\delta_1)^2$ (see Proposition 4.4 of \cite{Per3}).

Now we have to compute the restriction of $\delta_1^c$ to $\Detilde_{1,1}\setminus \Detilde_{1,1,1}$. This is not trivial, because $\Detilde_1^c$ is contained in $\Detilde_1$ but the closed immersion $\Detilde_{1,1} \setminus \Detilde_{1,1,1} \into \Detilde_{1}\setminus \Detilde_{1,1,1}$ is not regular. As a matter of fact, one can prove that doing the naive computation does not work, i.e. the difference $\delta_1^c-\delta_1p_2$ restricted to $\Detilde_{1,1}\setminus \Detilde_{1,1,1}$ is not divisible by the top Chern class of the normal bundle of $i_{1,1}:\Detilde_{1,1}\setminus \Detilde_{1,1,1}\into \Mtilde_3 \setminus \Detilde_{1,1,1}$.  To do it properly, we can consider the following cartesian diagram
$$
\begin{tikzcd}
	{(\Mtilde_{2,1}\setminus \ThTilde_2)\times \cB\gm} \arrow[r, "\rho_2^c"]                                                          & {\Mtilde_3 \setminus \Detilde_{1,1,1}}                                  \\
	{(\Mtilde_{1,2}\setminus \Mtilde_{1,1})\times \Mtilde_{1,1}\times \cB\gm} \arrow[u, hook, "\mu_1\times \id"] \arrow[r, "\rho_{1,1}"] & {\Detilde_{1,1}\setminus \Detilde_{1,1,1}} \arrow[u, "{i_{1,1}}", hook]
\end{tikzcd}
$$
where 
$$\mu_1:(\Mtilde_{1,2}\setminus \Mtilde_{1,1})\times \Mtilde_{1,1}\simeq (\ThTilde_1\setminus \ThTilde_2) \into \Ctilde_2\setminus \ThTilde_2 \simeq \Mtilde_{2,1} \setminus \ThTilde_2$$
is described in the proof of \Cref{lem:chow-ring-C2} and $\rho_2^c$ is the restriction of $\rho_2$ to the closed substack $\Detilde_1^c$. Notice that $\mu_1\times \id$ is a regular embedding of codimension $1$ whereas $i_{1,1}$ is regular of codimension $2$. Excess intersection theory implies that 
$$ \delta_1^c=\rho_{2,*}^c(1)=\rho_{(1,1),*}(c_1(\rho_{1,1}^*N_{i_{1,1}}/N_{\mu_1\times \id}));$$
the normal bundle $N_{\mu_1}$ was described in Proposition 4.9 of \cite{DiLorPerVis} while $N_{i_{1,1}}$ was described in Proposition 5.6 of \cite{Per3} (see also the proof of \Cref{lem:chow-ring-C2}). A computation shows that 
$$ c_1(\rho_{1,1}^*N_{i_{1,1}}/N_{\mu\times \id})=(t+t_2)$$
where $t$ is the generator of $\ch(\Mtilde_{1,2}\setminus \Mtilde_{1,1})$ while $t_2$ is the generator of $\ch(\cB\gm)$. Finally, we need to compute $\rho_{(1,1),*}(t+t_2)$. This can be done by noticing that $\rho_{1,1}$ factors through the morphism described in Lemma 5.1 of \cite{Per3}, i.e. the diagram 
$$ 
\begin{tikzcd}
	{(\Mtilde_{1,2}\setminus \Mtilde_{1,1})\times \Mtilde_{1,1}\times \cB\gm} \arrow[r, "\tilde{\rho}_{1,1}"] \arrow[rd, "\rho_{1,1}"] & {(\Mtilde_{1,2}\setminus \Mtilde_{1,1})\times \Mtilde_{1,1} \times \Mtilde_{1,1}} \arrow[d, "\pi_2"] \\
	& {\Detilde_{1,1}\setminus \Detilde_{1,1,1}}                                                 
\end{tikzcd}
$$
is commutative, where $\tilde{\rho}_{1,1}$ is induced by the zero section $\cB\gm \into [\AA^2/\gm]\simeq \Mtilde_{1,1}$. A simple computation using the fact that $\pi_2$ is a $C_2$-torsor gives us $$\delta_1^c\vert_{\Detilde_{1,1}\setminus \Detilde_{1,1,1}}=24(c_1t^2-2c_2t+c_1^3-3c_1c_2).$$
We can now divide $(\delta_1^c-\delta_1p_2)\vert_{\Detilde_{1,1}\setminus \Detilde_{1,1,1}}$ by $c_2+tc_1+t^2$, which is the $2$nd Chern class of the normal bundle of $i_{1,1}$, and get $i_{1,1}^*p_1=24(-2t-3c_1)$. Therefore we have $p_1=24(\lambda_1-H-2\delta_1)$.

Finally, we have to restrict $\delta_1^c$ to $\Detilde_{1,1,1}$. Again we can consider the following diagram
$$
\begin{tikzcd}
	{\Mtilde_{1,1}\times \Mtilde_{1,1}\times \cB\gm} \arrow[r, "c_2\times \id"] \arrow[d, "\tilde{\rho}_2", hook] & \ThTilde_2 \times \cB\gm \arrow[r, "\mu_2\times \id", hook] \arrow[d, "\alpha"] & {\Mtilde_{2,1} \times \cB\gm} \arrow[d] \\
	{\Mtilde_{1,1}\times \Mtilde_{1,1}\times \Mtilde_{1,1}} \arrow[r, "c_6"]                            & {\Detilde_{1,1,1}} \arrow[r, "{i_{1,1,1}}", hook]                     & \Mtilde_3                              
\end{tikzcd}
$$
where the right square is cartesian, the morphism $\mu_2:\ThTilde_2 \into \Ctilde_2\simeq \Mtilde_{2,1}$ is the closed immersion described in Section 4 of \cite{DiLorPerVis} (see also the proof of \Cref{lem:chow-ring-C2}), the morphism $c_2:\Mtilde_{1,1}\times \Mtilde_{1,1}\rightarrow \Detilde_1\simeq \ThTilde_2$ is the gluing morphism (see proof of Lemma 4.8 of \cite{DiLorPerVis}) and $c_6$ is the morphism described in Lemma 6.1 of \cite{Per3}. 
Excess intersection theory implies that 
$$\delta_1^c\vert_{\Detilde_{1,1,1}}= \alpha_*(N_{\mu_2});$$ 
moreover we have that
$$N_{\mu_2}= \frac{(c_2\times \id)_*\tilde{\rho}_2^*(t_1+t_2)}{2}$$ 
where $t_i$ is the generator of the $i$-th factor of the product $\Mtilde_{1,1}^{\times 3}$ for $i=1,2,3$. Therefore
$$\delta_{1}^c\vert_{\Detilde_{1,1,1}}=c_{6,*}(12t_3^2(t_1+t_2))=24(c_1c_2-3c_3)$$
in the Chow ring of $\Detilde_{1,1,1}$.
This leads finally to the description. The same procedure can be used to compute the class of $\delta_{1,1}^c$.
	
\end{proof}

\begin{remark}
The relation $\delta_1^c$ gives us that we do not need the generator $\delta_{1,1,1}$  in $\ch(\Mbar_3)$.
\end{remark}

\subsection*{The fundamental class of $\cA_5^0$ and $\cA_3^1$}

Now we concentrate on describing two of the strata of separating singularities, namely $\cA_{3}^1$ and $\cA_5^1$. Let us start with $\cA_3^1$.

\begin{proposition}
	We have the following description 
	$$ [\cA_3^1]=\frac{1}{2}(H+\lambda_1+\delta_1)(3H+\lambda_1+\delta_1) $$
	in the Chow ring of $\Mtilde_3$.
\end{proposition}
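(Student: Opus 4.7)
The plan is to follow the restriction-and-glue strategy recalled at the start of this section: compute the restriction of $[\cA_3^1]$ to each of the five strata $\Mtilde_3\setminus(\Htilde_3\cup\Detilde_1)$, $\Htilde_3\setminus\Detilde_1$, $\Detilde_1\setminus\Detilde_{1,1}$, $\Detilde_{1,1}\setminus\Detilde_{1,1,1}$, and $\Detilde_{1,1,1}$, and reassemble them via \Cref{lem:gluing}. The first simplification is the structural observation that $\cA_3^1\subseteq \Htilde_3$: joining two (possibly singular) genus-$1$ components at a separating tacnode produces a curve with a natural involution that restricts on each component to the $\pm 1$ hyperelliptic involution fixing the marked point, and this is compatible with the tangent-line identification defining the tacnode since both factors act by $-1$ on the relevant tangent spaces; the fixed locus is finite and the quotient is a reduced, connected, nodal genus-$0$ curve. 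Consequently the restriction of $[\cA_3^1]$ to $\Mtilde_3\setminus(\Htilde_3\cup\Detilde_1)$ vanishes.

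On $\Htilde_3\setminus\Detilde_1$ I would compute the restriction using the cyclic-cover description of \Cref{prop:descr-hyper}: a curve in $\cA_3^1\cap(\Htilde_3\setminus\Detilde_1)$ is a double cover of a twisted genus-$0$ curve $\cZ$ which is the non-stacky node of two rational components, together with a section $i\colon\cL^{\otimes 2}\to \cO_\cZ$ whose zero locus has length $2$ at the node (case (b2) of \Cref{def:hyp-A_r}); inside the equivariant representation model of $\Htilde_3\setminus\Detilde_1$ used in \cite{Per3} this cuts out an explicit cycle whose class I can express in terms of $\lambda_1$ and $H$. For the three deeper boundary strata I would intersect $\cA_3^1$ with each of them by factoring $\rho_3^1$ through the appropriate gluing morphism, exactly as was done in the computation of $\delta_1^c$ above, combining this with the explicit pullback formulas $\rho_3^{1,*}(\delta_1)=d_1-2b$, $\rho_3^{1,*}(\delta_{1,1})=d_2-bd_1+b^2$, and $\rho_3^{1,*}(\lambda_1)=b-d_1$ from \Cref{prop: rho-3-1-surj} to identify the resulting cycles in terms of the already-computed Chow rings of the strata recorded in \cite{Per3}.

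Once the five restrictions are in hand I would glue them one stratum at a time using \Cref{lem:gluing}: at each step divide the obstruction $X\vert_{\rm new}-i^*p_{\rm old}$ by the top Chern class of the relevant normal bundle (which is a non-zero divisor by the verifications of the gluing condition already carried out in \cite{Per3}) and read off the next patching term. The main obstacle will be the passage through $\Detilde_{1,1}\setminus\Detilde_{1,1,1}$, which, as seen in the proof of $\delta_1^c$, is not a regular embedding of the expected codimension, so the naive pullback fails; we will need to form a cartesian square involving $\rho_3^1$ and the morphism $\tilde{\rho}_{1,1}$ from Lemma 5.1 of \cite{Per3} and apply excess intersection theory to recover the correct contribution. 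After that the remaining assembly is formal, and collecting all the pieces should produce the displayed polynomial $\tfrac{1}{2}(H+\lambda_1+\delta_1)(3H+\lambda_1+\delta_1)$.
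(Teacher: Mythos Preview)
Your overall strategy---stratify and glue via \Cref{lem:gluing}---matches the paper's, and your first two steps (the inclusion $\cA_3^1\subset\Htilde_3$ and the restriction to $\Htilde_3\setminus\Detilde_1$ via the cyclic-cover model, which is essentially the $\Xi_1$-description the paper uses together with excess intersection along $\Htilde_3\hookrightarrow\Mtilde_3$) are on target.

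The plan goes wrong on the $\Detilde_1$-strata. The pullback formulas $\rho_3^{1,*}(\delta_1)=d_1-2b$, $\rho_3^{1,*}(\lambda_1)=b-d_1$, etc., describe how classes on $\Mtilde_3$ restrict \emph{to} $\cA_3^1$; they do not help you compute the restriction of $[\cA_3^1]$ \emph{to} $\Detilde_1\setminus\Detilde_{1,1}$, which requires instead identifying the scheme-theoretic intersection $\cA_3^1\cap(\Detilde_1\setminus\Detilde_{1,1})$ inside the latter stratum and writing down its class there. The paper does this by a direct geometric argument: under $\Detilde_1\setminus\Detilde_{1,1}\simeq(\Mtilde_{2,1}\setminus\ThTilde_1)\times\Mtilde_{1,1}$, a point lies in $\cA_3^1$ exactly when the marked point of the genus-$2$ factor sits on an ``almost-bridge'' (a $\PP^1$ meeting the rest in a tacnode), and via the contraction $\Mtilde_{2,1}\simeq\Ctilde_2$ this becomes the locus where the section is a cusp, cut out by explicit equations $s=a_5=a_4=0$ in the model of Corollary~4.2 of \cite{Per3}. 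The same idea handles $\Detilde_{1,1}\setminus\Detilde_{1,1,1}$ directly, without any excess-intersection correction: the obstruction you anticipate (modelled on the $\delta_1^c$ computation) does not arise here, because $\cA_3^1$ is not contained in $\Detilde_1$. Finally, you should observe at the outset that $\cA_3^1\cap\Detilde_{1,1,1}=\emptyset$, since a curve with three separating nodes has no room for a separating tacnode between two genus-$1$ pieces; this makes the deepest stratum trivial.
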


\begin{proof}
	 A generic object in $\cA_3^1$ can be described as two genus $1$ curves intersecting in a separating tacnode. We claim that the morphism $\cA_3^1 \rightarrow \Mtilde_3$ (which is proper thanks to Lemma 4.7 of \cite{Per1}) factors through $\Xi_1\subset\Htilde_3 \into \Mtilde_3$. In fact, given an element $((E_1,e_1),(E_2,e_2),\phi)$ in $\cA_3^1$ with $E_1$ and $E_2$ smooth genus $1$ curves, we can consider the hyperelliptic involution of $E_1$ (respectively $E_2$) induced by the complete linear sistem of $\cO(2e_1)$ (respectively $\cO(2e_2)$). It is easy to see that their differentials commute with the isomorphism $\phi$ and therefore we get an involution of the genus $3$ curve whose quotient is a genus $0$ curve with one node. Because we have considered a generic element and the hyperelliptic locus is closed, we get the claim. In particular, $\cA_3^1\vert_{\Mtilde_3 \setminus (\Htilde_3 \cup \Detilde_1)}$ is zero. 
	
	Consider now the description of $\Htilde_3\setminus \Detilde_1$ as in Section 2 of \cite{Per3}. Because $\cA_3^1\setminus \Detilde_1 \into \Xi_1\setminus \Detilde_1 \into \Htilde_3 \setminus \Detilde_1$, we have that excess intersection theory gives us the equalities
	$$ [\cA_3^1]\vert_{\Htilde_3 \setminus \Detilde_1} = c_1(N_{\cH|\cM})[a_3]=\frac{2\xi_1-\lambda_1}{3}[a_3]$$ 
	where $a_3$ is the class of $\cA_3^1$ as a codimension $2$ closed substack in $\Htilde_3\setminus \Detilde_1$. Suppose we have an element $(Z/S,L,f)$ in $\Htilde_3 \setminus \Detilde_1$; first of all we have that $\cA_3^1\subset \Xi_1$, therefore $Z/S \in \cM_0^{1}$. Furthermore, because we have a separating tacnode between two genus $1$ curve, it is clear that the nodal section $n$ in $Z$ has to be in the branching locus, or equivalently $f(n)=0$. Due to the description of $\Xi_1$, we have that 
	$$ a_3 = (-2s)[\Xi_1] = (-2s)(-c_1) $$
	and therefore using $\lambda$-classes as generators
	$$ [\cA_3^1]\vert_{\Htilde_3 \setminus \Detilde_1}=\frac{1}{3}a_3 (2\xi_1-\lambda_1) = \frac{2}{9}(\xi_1+\lambda_1)(2\xi_1-\lambda_1)\xi_1.$$ 
	
	Let us focus on the restriction to $\Detilde_1 \setminus \Detilde_{1,1} \simeq (\Mtilde_{2,1}\setminus \ThTilde_1) \times \Mtilde_{1,1}$. It is easy to see that the only geometric objects that are in $\cA_3^1$ are of the form $((C,p),(E,e))$ where $p$ lies in an almost-bridge of the genus $2$ curve $p$, i.e. $p$ is a smooth point in a projective line that intersects the rest of the curve in a tacnode. Recall that we have an open immersion
	$$\Mtilde_{2,1}\setminus \ThTilde_1 \simeq \Ctilde_{2}\setminus \ThTilde_1$$
	described in \Cref{theo:contrac}. Through this identification, $(C,p)$ corresponds to a pair $(C',p')$ where $C'$ is an ($A_r$-)stable genus $2$ curve and $p'$ is a cuspidal point. Therefore it is easy to see that in the notation of Corollary 4.2 of \cite{Per3}, we have that the fundamental class of the separating tacnodes is described by the equations $s=a_5=a_4=0$.  We get the following expression
	$$ [\cA_3^1]\vert_{\Detilde_{1}\setminus\Detilde_{1,1}} = -2t_1(t_0-2t_1)(t_0-3t_1) \in \ch(\Detilde_1 \setminus \Detilde_{1,1}).$$ 
	
	The same idea works for $\Detilde_{1,1}\setminus \Detilde_{1,1,1}$ (using Lemma 5.2 of \cite{Per3}) and gives us the following description 
	$$ [\cA_3^1]\vert_{\Detilde_{1,1}\setminus \Detilde_{1,1,1}}= -24t^3 \in \ch(\Detilde_{1,1} \setminus \Detilde_{1,1,1}).$$
	
	Finally, it is clear that $\cA_3^1 \cap \Detilde_{1,1,1} = \emptyset$. 
\end{proof}

Now we focus on the fundamental class of $\cA_5^0$. A generic object in $\cA_5^0$ can be described as a genus $1$ curve and a genus $0$ curve intersecting in a $A_5$-singularity. Notice that this implies that $\cA_5^0 \cap \Htilde_3=\emptyset$ because the only possible involution of an $A_5$-singularity has to exchange the two irreducible components. In the same way, it is easy to prove that $\cA_5^0 \cap \Detilde_{1,1} = \emptyset$. 

The intersection of $\cA_5^0$ with $\Detilde_{1}$ is clearly transversal, therefore
$$[\cA_5^0]\vert_{\Detilde_{1}\setminus \Detilde_{1,1}}=[\cA_5^0 \cap \Detilde_1].$$

\begin{lemma}\label{lem:a-5-0}
	We have the following equality
	$$ [\cA_5^0]\vert_{\Detilde_{1}\setminus \Detilde_{1,1}} = 72(t_0+t_1)^3t_0t_1 - 384(t_0+t_1)(t_0t_1)^2$$
	in the Chow ring of $\Detilde_1 \setminus \Detilde_{1,1}$.
\end{lemma}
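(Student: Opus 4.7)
The approach mirrors the computation of $[\cA_3^1]\vert_{\Detilde_1\setminus \Detilde_{1,1}}$ in the preceding proposition. Since the statement already notes that $\cA_5^0 \cap \Detilde_1$ is transversal, the required class is the fundamental class of the set-theoretic intersection.

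First, I would identify $\cA_5^0 \cap (\Detilde_1 \setminus \Detilde_{1,1})$ geometrically inside the product $(\Mtilde_{2,1}\setminus \ThTilde_1)\times \Mtilde_{1,1}$. Writing a generic point as $((C_2, p), (E, e))$ with $C_2$ a genus $2$ $A_r$-stable curve, $p$ a smooth point, $E$ a genus $1$ curve, and $e$ its marked point, a case analysis based on \Cref{rem:genus-count} shows that the ambient genus $3$ curve has an $A_5^0$ singularity iff the $A_5$ lies on $C_2$ and is of separating type; placing the $A_5$ on the $E$-side would force $g(\tilde E) = -1$. Since $g(C_2) = 2$, this in turn forces $C_2 = L_1 \cup_{A_5} L_2$ with $L_1, L_2 \cong \PP^1$ and $p$ a smooth point on one of the two components. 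Consequently the intersection has the shape $X \times \Mtilde_{1,1}$ for a closed substack $X \subset \Mtilde_{2,1}\setminus \ThTilde_1$, which is consistent with the claimed formula being independent of the $\Mtilde_{1,1}$ generators.

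Second, I would use the contraction isomorphism $\Mtilde_{2,1}\setminus \ThTilde_1 \simeq \Ctilde_2 \setminus \ThTilde_1$ from \Cref{theo:contrac} together with the presentation $\Ctilde_2 \setminus \ThTilde_1 \simeq [\widetilde{\AA}(6)\setminus 0 / B_2]$ recalled in the proof of \Cref{lem:chow-ring-C2} (Proposition 4.1 of \cite{Per3}), where the coordinates $a_0,\dots,a_6$ of $\widetilde{\AA}(6)$ describe a degree $6$ binary form $f$ defining the hyperelliptic cover $y^2 = f(x)$, together with a section coordinate $s$ for the marked point. As in the proof for $\cA_3^1$, where the separating-tacnode locus was described by $s=a_4=a_5=0$, the locus $X$ corresponds to $f$ having a single root of multiplicity $6$ (yielding the two-$\PP^1$ configuration), and is cut out by the simultaneous vanishing of $s$ together with all lower-order coefficients of $f$. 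I would then compute $[X]$ as the product of the $B_2$-weights of these vanishing coordinates in the equivariant Chow ring, and translate the answer to the generators $t_0, t_1$ of $\ch(\Ctilde_2 \setminus \ThTilde_1)\simeq \ZZ[1/6,t_0,t_1]/(f_7)$ via the dictionary between these classes and the restrictions of the standard $\lambda$- and $\psi$-classes already used in \Cref{lem:chow-ring-C2}. Pulling back along the projection $\Detilde_1 \setminus \Detilde_{1,1} \simeq (\Mtilde_{2,1}\setminus \ThTilde_1)\times \Mtilde_{1,1}$ should reproduce the stated polynomial, which is naturally symmetric in $t_0 \leftrightarrow t_1$.

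The main technical obstacle will be the combinatorial bookkeeping: the marked point $p$ may lie on either of the two $\PP^1$ branches of $L_1\cup_{A_5}L_2$ (interchanged by the hyperelliptic involution), so $X$ is likely a union of two components related by this symmetry, and one must carefully track both the scheme-theoretic multiplicities and the precise $B_2$-weights of the vanishing coordinates to recover the specific coefficients $72$ and $-384$. The suggestive factorization $72(t_0+t_1)^3 t_0 t_1 - 384(t_0+t_1)(t_0t_1)^2 = 24\,(t_0+t_1)\,t_0 t_1\,(3t_0-t_1)(t_0-3t_1)$ indicates that the computation should naturally produce a product of four linear forms in $t_0, t_1$ times an overall multiplicity of $24$, which provides a useful sanity check on the individual weight contributions.
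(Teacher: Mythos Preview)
Your reduction to a computation inside $[\widetilde{\AA}(6)\setminus 0/B_2]$ via the product decomposition $\Detilde_1 \setminus \Detilde_{1,1} \simeq (\Mtilde_{2,1}\setminus \ThTilde_1)\times \Mtilde_{1,1}$ is exactly the paper's approach, and you are right that the $\Mtilde_{1,1}$-factor contributes nothing. The gap lies in your description of the locus $X$.

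The analogy with the $\cA_3^1$ computation breaks down at the point where you invoke it. There, the almost-bridge carrying the marked point $p$ contracts to a cusp located \emph{at} the section $p'$; because the $B_2$-coordinates on $\widetilde{\AA}(6)$ pin the section to a fixed point of $\PP^1$, the condition ``$p'$ is a cusp'' becomes a genuine complete intersection of coordinate hyperplanes ($s=a_5=a_4=0$). In the present case the $A_5$-singularity of the genus-$2$ curve is \emph{not} at the marked point --- $p$ is a smooth point on one of the two $\PP^1$-branches. The condition on $(f,s)\in\widetilde{\AA}(6)$ is therefore simply that $f\in\AA(6)$ has a root of multiplicity $6$ \emph{somewhere} on $\PP^1$, with $s$ entirely unconstrained. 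This locus is the affine cone over the rational normal sextic $\PP^1 \hookrightarrow \PP(\AA(6))$, $\ell \mapsto \ell^6$: it is irreducible of codimension $5$, but it is not a linear subspace, and no choice of coordinate hyperplanes will cut it out. (Your worry about $X$ splitting into two components, one for each branch, also dissolves in this model: the two branches are exchanged by $y\mapsto -y$, which is invisible in the coordinates $(f,s)$.)

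The paper proceeds by passing to the maximal torus $T\subset B_2$ --- legitimate because $B_2$ is special --- and invoking the discriminant formula of \Cref{rem:gener}, which computes the $T$-equivariant class of the multiplicity-$k$ locus as the pushforward $\pi_{1,*}(1)$ along the parametrizing map. That pushforward formula, not a product of hyperplane weights, is what produces the coefficients $72$ and $-384$.
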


\begin{proof}
	Because every $A_5$-singularity for a curve of genus $2$ is separating, it is enough to compute the fundamental class of the locus of $\cA_5$-singularities in $\Ctilde_2 \setminus \ThTilde_1$. We know that 
	$$\Ctilde_2 \setminus \ThTilde_1 \simeq [\widetilde{\AA}(6)\setminus 0/B_2]$$ 
	see Corollary 4.2 of \cite{Per3} for a more detailed discussion. We have that an element $(f,s) \in \widetilde{\AA}(6)$ defines a genus $2$ curve with an $A_5$-singularity if and only if $f \in \AA(6)$ has a root of multiplicity $6$. Because $B_2$ is a special group, it is enough to compute the $T$-equivariant fundamental class of the locus parametrizing sections of $\AA(6)$ which have a root of multiplicity $6$, where $T$ is the maximal torus inside $B_2$. Therefore one can use the formula in \Cref{rem:gener}.
\end{proof}

It remains to describe the restriction of $\cA_5^0$ in $\Mtilde_3\setminus (\Htilde_3 \cap \Detilde_1)$. First of all, we pass to the projective setting. Recall that 
$$ \Mtilde_3\setminus (\Htilde_3 \cap \Detilde_1) \simeq [U/\GL_3]$$  
where $U$ is an invariant open inside a $\GL_3$-representation of the space of forms in three coordinates of degree $4$. Because $U$ does not cointain the zero section, we can consider the projectivization $\overline{U}$ in $\PP^{14}$ and we have the isomorphism 
$$ \ch_{\GL_3}(U)\simeq \ch_{\GL_3}(\overline{U})/(c_1-h)$$ 
where $c_1$ is the first Chern class of the standard representation of $\GL_3$.  We want to describe the locus $X_5^0$ which parametrizes pairs $(f,p)$ such that $p$ is a separating $A_5$-singularity of $f$. Because a quartic in $\PP^2$ has at most one $A_5$-singularity, we can compute the pushforward through $\pi$ of the fundamental class of $X_5^0$ and then set $h=c_1$ to the get the fundamental class of $\cA_5^0$. Notice that pair $(f,p) \in X_5^0$ can be described as a cubic $g$ and a line $l$ such that $f=gl$ and $p$ is the only intersection between $g$ and $l$, or equivalently $p$ is a flex of $g$ and $l$ is the flex tangent. 

To describe $X_5^0$, we first introduce the locally closed substack $X_2$ of $[\PP^{14}\times \PP^2/\GL_3]$ which parametrizes pairs $(f,p)$ such that $p$ is a $A_r$-singularity of $f$ with $r\geq 2$ (eventually $r=\infty$).

Recall that we have the isomorphism  
$$ [\PP^{14}\times \PP^2/\GL_3] \simeq [\PP^{14}/H]$$
where $H$ is the stabilizer subgroup of $\GL_3$ of the point $[0:0:1]$ in $\PP^2$. The isomorphism is a consequence of the transitivity of the action of $\GL_3$ on $\PP^2$. Moreover, $H\simeq (\GL_2\times \gm)\ltimes \ga^2$; see Remark 3.13 of \cite{Per3}. We denote by $G:=(B_2\times \gm)\ltimes \ga^2$ where $B_2 \into \GL_2$ is the Borel subgroup of upper triangular matrices inside $\GL_2$.

Finally, let us denote by $L$ the standard representation of $\gm$, by $E$ the standard representation of $\GL_2$ and $E_0 \subset E$ the flag induced by the action of $B_2$ on $E$; $E_1$ is the quotient $E/E_0$. We denote by $V_3$ the $H$-representation
$$(L^{\vee} \otimes {\rm Sym}^3E^{\vee}) \oplus {\rm Sym}^4E^{\vee}.$$

\begin{lemma}\label{lem:X-2}
	In the setting above, we have the following isomorphism
	$$ X_2 \simeq [V_3 \otimes E_0^{\otimes 2} \otimes L^{\otimes 2}/G]$$
	of algebraic stacks.
\end{lemma}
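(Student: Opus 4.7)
The strategy is to use the canonical singular tangent direction of an $A_r$-singularity (for $r\geq 2$) to refine the $H$-quotient to a $G$-quotient, and then to identify the resulting $G$-variety explicitly.

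Using the isomorphism $[\PP^{14}\times\PP^2/\GL_3]\simeq[\PP^{14}/H]$ already recalled, I would first write $X_2\simeq[X_2'/H]$, where $X_2'\subset\PP^{14}$ is the locus of quartics $f$ such that $p=[0:0:1]$ is an $A_r$-singularity of $V(f)$ for some $r\geq 2$. The conditions $f(p)=df(p)=0$ cut out the $H$-invariant subspace $F^2\subset\mathrm{Sym}^4 V^\vee$, equipped with a vanishing-order filtration $F^2\supset F^3\supset F^4$ whose graded pieces are the Levi-modules ${\rm Gr}^i\simeq\mathrm{Sym}^i E^\vee\otimes(L^\vee)^{\otimes(4-i)}$; the $A_{\geq 2}$-condition translates to the leading term $\bar f_2\in{\rm Gr}^2$ being a nonzero perfect square in $\mathrm{Sym}^2 E^\vee$.

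The key step is to reduce the structure group from $H$ to $G$ via the singular direction. The stabilizer in $\GL_3$ of the pair $(p,E_0\subset T_p\PP^2)$ is exactly $G$, so fixing a tangent line at $p$ is a $\PP^1$-reduction of structure group along $H/G\simeq\GL_2/B_2$. Since an $A_{\geq 2}$-singularity carries a canonical singular direction (the kernel of the Hessian), this gives $X_2\simeq[Y/G]$, where $Y\subset X_2'$ parametrizes forms whose singular direction is $E_0$. Concretely, $Y$ is the affine chart $\PP(Z)\setminus\PP(F^3)$, where $Z:=F^2\cap\{\bar f_2\in M\}$ and $M$ is the distinguished one-dimensional $G$-subspace of ${\rm Gr}^2$ consisting of rank-one quadratic forms with kernel $E_0$.

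Finally, the short exact sequence of $G$-modules $0\to F^3\to Z\to M\to 0$ exhibits $Y$ as the open chart of $\PP(Z)$ on which the $M$-component is nonzero. Choosing a normalization $\bar f_2=c_0\in M\setminus 0$ provides a $G$-equivariant isomorphism of $Y$ with $F^3$ as a variety, with $G$-action twisted by the character of $M^{-1}$ that records the rescaling needed to preserve the normalization. Since $F^3$ equals $V_3$ as a Levi-module, tracking characters and using the identity of one-dimensional $B_2$-representations $E_1^\vee\otimes E_0^\vee\simeq(\det E)^{-1}$ identifies the twisted representation with $V_3\otimes E_0^{\otimes 2}\otimes L^{\otimes 2}$, yielding the claimed isomorphism of stacks. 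The main obstacle will be the final bookkeeping: simultaneously verifying that the non-split $G$-equivariant extensions $F^4\subset F^3\subset Z$ linearize consistently on the affine chart, and matching the one-dimensional character twists exactly to $E_0^{\otimes 2}\otimes L^{\otimes 2}$.
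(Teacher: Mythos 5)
Your proposal is correct and takes essentially the same route as the paper: identify $X_2$ inside $[\PP^{14}/H]$ as the locus where the quadratic leading term is a nonzero perfect square, reduce the structure group from $H$ to $G=(B_2\times\gm)\ltimes\ga^2$ via the canonical tangent direction of the $A_{\geq 2}$-singularity, and linearize by normalizing the leading term, which produces the twist by $E_0^{\otimes 2}\otimes L^{\otimes 2}$. The only difference is one of ordering --- the paper first parametrizes the discriminant $\VV(a_{11}^2-4a_{20}a_{02})$ by the square root $l$ of the quadratic part (via a weighted projectivization $\PP_{1,2}(W)$ and the squaring map $l\mapsto l^2$) and only afterwards reduces to the Borel using transitivity of $\GL_2$ on $E^{\vee}\setminus 0$, whereas you reduce to the Borel first and so avoid the weighted projective bundle --- and the residual equivariance and character bookkeeping you flag at the end is exactly the verification the paper's final normalization step also has to carry out.
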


\begin{proof}
Thanks to the isomorphism  
$$ [\PP^{14}\times \PP^2/\GL_3] \simeq [\PP^{14}/H]$$
we can describe $X_2$ as a substack of the right-hand side. The coordinates of $[\PP^{14}/H]$ are the coefficients of the generic polynomial $p=a_{00}+a_{10}x+ a_{01}y+ \dots +a_{04}y^4$ which is the dehomogenization of the generic quartic $f$ in the point $[0:0:1]$.
First of all, notice that $X_2$ is contained in the complement of the closed substack $X$ parametrizing polynomials $f$ such that its first and second derivatives in $x$ and $y$ vanish in $(0,0):=[0:0:1]$, thanks to Lemma 3.6 of \cite{Per3}. This is an $H$-equivariant subbundle of codimension $6$ in $[\PP^{14}/H]$ defined by the equations $$a_{00}=a_{10}=a_{01}=a_{20}=a_{11}=a_{02}=0.$$
We denote it simply by $[\PP^8/H]$. Moreover, $X_2$ is cointained in the locus parametrizing quartic $f$ such that $f$ is singular in $(0,0)$. This is an $H$-equivariant subbundle defined by the equations $a_{00}=a_{10}=a_{01}=0$. We denote it simply $[\PP^{11}/H]$, therefore $X_2 \into [\PP^{11}\setminus \PP^8/H]$. By construction, $\PP^{11}$ can be described as the projectivization of the $H$-representation $V$ defined as 
$$ (L^{\otimes -2}\otimes{\rm Sym}^2E^{\vee}) \oplus V_3$$
whereas $\PP^8$ is the projectivization of $V_3$.

Consider an element $p$ in $\PP^{11}$, described as the polynomial $a_{20}x^2+a_{11}xy+a_{02}y^2 + p_3(x,y) +p_4(x,y)$ where $p_3$ and $p_4$ are homogeneous polynomials in $x,y$ of degree respectively $3$ and $4$. Notice that $(0,0)$ is an $A_1$-singularity, i.e. an ordinary node, if and only if $a_{11}^2-4a_{02}a_{20}\neq 0$.  In fact, an $A$-singularity (eventually $A_{\infty}$) is a node if and only if it has two different tangent lines. Therefore $X_2$ is equal to the locus where the equality holds and therefore it is enough to describe $\VV(a_{11}^2-4a_{02}a_{20})$ in $[\PP^{11}\setminus \PP^8/H]$.

We define $W$ as the $H$-representation 
$$W:=(L^{\vee}\otimes E^{\vee}) \oplus (L^{\vee} \otimes {\rm Sym}^3E^{\vee}) \oplus {\rm Sym}^4E^{\vee}$$
and we consider the $H$-equivariant closed embedding of $W\into V$ induced by the morphism of $H$-schemes
$$ L^{\vee}\otimes E^{\vee} \into {\rm Sym}^2(L^{\vee} \otimes E^{\vee})\simeq L^{\otimes -2}\otimes {\rm Sym}^2E^{\vee}$$ 
which is defined by the association $(f_1,f_2) \mapsto (f_1^2,2f_1f_2,f_2^2)$. Now, consider the $\gm$-action on $W$ defined using weight $1$ on the $2$-dimensional vector space $(L^{\vee}\otimes E^{\vee})$ and weight $2$ on $V_3$. We denote by $\PP_{1,2}(W)$ the quotient stack and by $\PP_2(V_3)\into \PP_{1,2}(W)$ the closed substack induced by the embedding $V_3\into W$. One can prove that the morphism 
$$ [\PP_ {1,2}(W)\setminus \PP_2(V_3)/H] \into [\PP(V)\setminus \PP(V_3)/H]$$
induced by the closed immersion $W \into V$ is a closed immersion too and its stack-theoretic image is exactly the locus $\VV(a_{11}^2-4a_{02}a_{20})$. We are considering the action of $H$ on a stack as defined in \cite{Rom}. Finally, because the action of $\GL_2$ over $E^{\vee}$ is transitive, we have the isomorphism 
$$ [\PP_{1,2}(W) \setminus \PP_2(V_3)/H] \simeq [\PP_{1,2}(W_0) \setminus \PP_2(V_3)/G]$$ 
where the $G$-representation $W_0\into W$ is defined as $(L^{\vee}\otimes E_0^{\vee})\oplus V_3$. We want to stress that this is true only if we remove the locus $\PP_2(V_3)$, in fact the subgroup of stabilizers of $W_0$ in $W$ is equal to $H$ when restricted to $V_3$. Finally, we notice that we have an $H$-equivariant isomorphism $$\PP_{1,2}(W_0)\setminus \PP_2(V_3)\simeq V_3 \otimes (E_0^{\otimes 2}\otimes L^{\otimes 2})$$ of stacks.
	
\end{proof}

\begin{remark}
	By construction, an element $f \in V_3 \otimes E_0^{\otimes 2} \otimes L^{\otimes 2}$ is associated to the curve $y^2=f(x,y)$. Notice that $E^{\vee}$ is the vector space $E_0^{\vee}\oplus E_1^{\vee}$ where $x$ is a generator for $E_1^{\vee}$ and $y$ is a generator for $E_0^{\vee}$. Moreover,  $L^{\vee}$ is generated by $z$ where $(x,y,z)$ are a basis of the dual of the standard representation of $\GL_2$.
\end{remark}

\begin{corollary}
	We have an isomorphism of rings
	$$ \ch(X_2)\simeq \ZZ[1/6,t_1,t_2,t_3]$$ 
	where $t_1,t_2,t_3$ are the first Chern classes of the standard representations of the three copies of $\gm$ in $G$. Specifically, $t_1,t_2,t_3$ are the first Chern classes of $E_1,E_0,L$ respectively.
\end{corollary}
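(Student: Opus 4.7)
The plan is to combine the description from \Cref{lem:X-2} with standard homotopy invariance of equivariant Chow rings. By \Cref{lem:X-2}, we have the isomorphism $X_2 \simeq [V_3 \otimes E_0^{\otimes 2} \otimes L^{\otimes 2}/G]$, and the inner space is a $G$-equivariant vector bundle over a point. Thus the zero section induces an isomorphism
$$\ch(X_2) \simeq \ch([V_3 \otimes E_0^{\otimes 2} \otimes L^{\otimes 2}/G]) \simeq \ch(BG).$$

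Next, I would compute $\ch(BG)$ for $G = (B_2 \times \gm) \ltimes \ga^2$. The unipotent radical $\ga^2$ contributes trivially to the Chow ring: explicitly, the projection $G \to B_2 \times \gm$ is a torsor under $\ga^2$, and affine-bundle invariance of Chow groups gives an isomorphism $\ch(BG) \simeq \ch(B(B_2 \times \gm))$. A K\"unneth-type decomposition then yields $\ch(B(B_2 \times \gm)) \simeq \ch(BB_2) \otimes_{\ZZ[1/6]} \ch(B\gm)$.

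For the two factors, recall that $B_2 \subset \GL_2$ is special, so $B_2$-torsors are Zariski-locally trivial; equivalently, $BB_2$ is a product of two copies of $B\gm$ coming from the two characters that send an upper-triangular matrix to its diagonal entries. Concretely, these are the first Chern classes of $E_1$ and $E_0$ respectively, which we denote $t_1$ and $t_2$. The last factor $\ch(B\gm)$ is generated by $t_3 = c_1(L)$. Putting everything together gives the claimed isomorphism $\ch(X_2) \simeq \ZZ[1/6,t_1,t_2,t_3]$.

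No real obstacle is expected here: the argument is a routine chain of reductions once \Cref{lem:X-2} is granted. The only point that deserves care is the identification of the three characters with $c_1(E_1), c_1(E_0), c_1(L)$, which follows directly from the definitions of $B_2$, $E_0$, $E_1$ and $L$ at the end of the previous lemma.
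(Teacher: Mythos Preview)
Your argument is correct and is precisely the standard reduction the paper leaves implicit (the corollary is stated without proof, as an immediate consequence of \Cref{lem:X-2}). One minor point of phrasing: $\cB B_2$ is not literally $\cB\gm \times \cB\gm$ as a stack, but the map $\cB B_2 \to \cB\gm^2$ given by the two diagonal characters induces an isomorphism on Chow rings because the unipotent radical of $B_2$ contributes trivially---which is exactly what your computation uses.
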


Recall that $X$ is the closed substack of $\PP^{14}\times \PP^2$ which parametrizes pairs $(f,p)$ such that $p$ is a singular point of $f$ but not an $A$-singularity, see Definition 3.4 of \cite{Per3}.
Thus, we have a closed immersion 
$$ i_2:=X_2 \into [(\PP^{14}\times \PP^2)\setminus X/\GL_3]$$ 
and we can describe its pullback at the level of Chow ring. We have an isomorphism
$$\ch_{\GL_3}(\PP^{14}\times \PP^2) \simeq \frac{\ZZ[1/6,c_1,c_2,c_3,h_{14},h_{2}]}{(p_2(h_2),p_{14}(h_{14}))}$$ 
where $c_i$ is the $i$-th Chern class of the standard representation of $\GL_3$, $h_{14}$ (respectively $h_{2}$) is the hyperplane section of $\PP^{14}$ (respectively of $\PP^{2}$) and $p_{14}$ (respectively $p_2$) is a polynomial of degree $15$ (respectively $3$) with coefficients in $\ch(\cB\GL_3)$. 
\begin{proposition}\label{prop:i-2}
	The closed immersion $i_2$ is the complete intersection in $[(\PP^{14}\times \PP^2)\setminus X/\GL_3]$ defined by equations 
	$$ a_{00}=a_{10}=a_{01}=a_{11}^2-4a_{20}a_{02}=0$$
	whose fundamental class is equal to
	$$2(h+k-c_1)(h+4k)((h+3k)^2-(c_1+k)(h+2k)+c_2).$$
	Moreover, the morphism $i_2^*$ is defined by the following associations:
\begin{itemize}
		\item $i_2^*(k)=-t_3$,
		\item $i_2^*(c_1)=t_1+t_2+t_3$, 
		\item $i_2^*(c_2)=t_1t_2+t_1t_3+t_2t_3$,
		\item $i_2^*(c_3)=t_1t_2t_3$,
		\item $i_2^*(h)=2(t_2+t_3)$,
\end{itemize}   
\end{proposition}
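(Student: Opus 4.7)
My plan is to dispatch the proposition in four stages: (a) identify the defining ideal of $i_2$ as the one generated by the four indicated equations; (b) verify by a dimension count that these equations cut out a complete intersection; (c) compute the fundamental class by recognising each equation as a section of an explicit $\GL_3$-equivariant bundle and reading off Chern classes; (d) translate the generators $h,k,c_1,c_2,c_3$ into the $G$-equivariant language via the presentation of $X_2$ supplied by \Cref{lem:X-2}.

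Stage (a) is essentially contained in the proof of \Cref{lem:X-2}: the conditions $a_{00}=a_{10}=a_{01}=0$ express the vanishing of the $1$-jet of $f$ at $[0:0:1]$, while $a_{11}^{2}-4a_{20}a_{02}=0$ distinguishes the non-nodal $A$-singularities from ordinary nodes. For stage (b), since \Cref{lem:X-2} gives $\dim X_2=\dim[(\PP^{14}\times\PP^2)\setminus X/\GL_3]-4$, the four equations drop the dimension by exactly four and the regularity of the sequence follows automatically.

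For (c), write $V$ for the standard $\GL_3$-representation and view the universal quartic as a section of $\cF:=\cO_{\PP^{14}}(1)\boxtimes\cO_{\PP^{2}}(4)$; its relative $1$-jet along $\PP^{2}$ is a section of the rank $3$ bundle $\cO_{\PP^{14}}(1)\otimes\cP^{1}(\cO_{\PP^{2}}(4))$ whose zero locus is the universal singular-point locus. The jet-bundle filtration $0\to\Omega^{1}_{\PP^{2}}(4)\to\cP^{1}(\cO_{\PP^{2}}(4))\to\cO_{\PP^{2}}(4)\to 0$ yields
\[
c_{3}\bigl(\cO_{\PP^{14}}(1)\otimes\cP^{1}(\cO_{\PP^{2}}(4))\bigr)=(h+4k)\cdot c_{2}\bigl(\Omega^{1}_{\PP^{2}}(4)\otimes\cO_{\PP^{14}}(1)\bigr),
\]
and the Euler sequence $0\to\Omega^{1}_{\PP^{2}}\to V^{\vee}(-1)\to\cO\to 0$ expands the right-hand factor as $(h+3k)^{2}-(c_{1}+k)(h+2k)+c_{2}$. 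On the singular-point locus the quadratic part $a_{20}x^{2}+a_{11}xy+a_{02}y^{2}$ is a section of $\mathrm{Sym}^{2}\Omega^{1}_{\PP^{2}}\otimes\cO_{\PP^{2}}(4)\otimes\cO_{\PP^{14}}(1)$, equivalently a map $T_{\PP^{2}}\to\Omega^{1}_{\PP^{2}}(4)\otimes\cO_{\PP^{14}}(1)$, and $a_{11}^{2}-4a_{20}a_{02}$ is (a nonzero scalar multiple of) its determinant. That determinant is therefore a section of $(\det\Omega^{1}_{\PP^{2}})^{\otimes 2}\otimes\cO_{\PP^{2}}(8)\otimes\cO_{\PP^{14}}(2)$; since $\det\Omega^{1}_{\PP^{2}}=\det V^{\vee}\otimes\cO_{\PP^{2}}(-3)$, the first Chern class of this line bundle is $2(h+k-c_{1})$. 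Multiplying the three contributions produces the stated fundamental class.

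For (d), $V_{3}\otimes E_{0}^{\otimes 2}\otimes L^{\otimes 2}$ is an affine $G$-representation, so any $G$-equivariant line bundle on $X_{2}$ is trivial and its class is pinned down by its fibre at the origin. At the origin the corresponding quartic is $y^{2}z^{2}$, sitting at the point $[0:0:1]\in\PP^{2}$: the fibre of $\cO_{\PP^{2}}(-1)$ there is the invariant line $L\subset V$, so $i_{2}^{*}(k)=-t_{3}$; the standard representation $V$ restricts to $E\oplus L$ as a $G$-representation with Chern roots $t_{1},t_{2},t_{3}$, producing $i_{2}^{*}(c_{i})$ as the elementary symmetric functions in the $t_{j}$; and the fibre of $\cO_{\PP^{14}}(-1)$ is the line $\mathrm{span}(y^{2}z^{2})\subset\mathrm{Sym}^{4}V^{\vee}$, which is the representation $E_{0}^{\vee\otimes 2}\otimes L^{\vee\otimes 2}$, whence $i_{2}^{*}(h)=2(t_{2}+t_{3})$.

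The main technical obstacle is stage (c): one must track the twists and signs carefully through the Euler sequence and the jet-bundle filtration so that the coefficients of $c_{1}$ in the two non-trivial factors come out correctly. Once the $\GL_3$-equivariant bundles have been identified the Chern-class expansions are mechanical, but the conventions for $\PP(V)$, for $\cO(1)$, and for the direction of the Euler sequence have to be fixed at the outset.
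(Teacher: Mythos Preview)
Your proof is correct. The paper's own proof consists of a single line --- ``It follows from the proof of \Cref{lem:X-2}'' --- so your argument is not so much a different route as a genuine fleshing-out of what the paper leaves implicit.

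The one place where your approach differs in spirit is stage~(c). The paper, via the identification $[\PP^{14}\times\PP^2/\GL_3]\simeq[\PP^{14}/H]$ established in \Cref{lem:X-2}, would naturally compute the class of each equation $a_{ij}=0$ as an $H$-equivariant hyperplane and then rewrite the answer in terms of the $\GL_3$-equivariant generators $c_1,c_2,c_3,h,k$. You instead stay on the $\GL_3$-side throughout and recognise the three linear equations as the vanishing of the relative $1$-jet section in $\cO_{\PP^{14}}(1)\otimes\cP^{1}(\cO_{\PP^{2}}(4))$, and the discriminant as the determinant of the Hessian map $T_{\PP^{2}}\to\Omega^{1}_{\PP^{2}}(4)\otimes\cO_{\PP^{14}}(1)$. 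This is more intrinsic and avoids the bookkeeping of passing between $H$- and $\GL_3$-equivariant presentations; the trade-off is exactly what you flag at the end, namely that the jet-bundle and Euler-sequence manipulations require some care with twists and signs. I checked your Chern-class expansions and they agree with the stated formula: in particular $c_2\bigl(\Omega^{1}_{\PP^{2}}(4)\otimes\cO_{\PP^{14}}(1)\bigr)=(h+3k)^{2}-(c_1+k)(h+2k)+c_2$ and $c_1\bigl((\det\Omega^{1}_{\PP^{2}})^{\otimes 2}\otimes\cO_{\PP^{2}}(8)\otimes\cO_{\PP^{14}}(2)\bigr)=2(h+k-c_1)$, as you claim.

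Stage~(d) is also correct; your identification of the origin of $V_3\otimes E_0^{\otimes 2}\otimes L^{\otimes 2}$ with the quartic $y^2z^2$ matches the chart constructed at the end of the proof of \Cref{lem:X-2}, and the weights follow.
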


\begin{proof}
	It follows from the proof of \Cref{lem:X-2}.
\end{proof}

Because $i_2^*$ is clearly surjective, it is enough to compute the fundamental class of $X_5^0$ as a closed subscheme of $X_2$, choose a lifting through $i_2^*$ and then multiply it by the fundamental class of $X_2$.

We are finally ready to do the computation. 
\begin{corollary}
	The closed substack $X_5^0$ of $X_2$ is the complete intersection defined by the vanishing of the coefficients of $x^3, x^2y, x^4$ as coordinates of $V_3 \otimes E_0^{\otimes 2} \otimes L^{\otimes 2}$.
\end{corollary}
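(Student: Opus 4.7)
The plan is to make the defining conditions of $X_5^0$ explicit in the coordinates of \Cref{lem:X-2}. A general point of $X_5^0$ corresponds to a pair $(F, p)$ where the quartic $F$ factors as $F = L \cdot G$ with $L$ a line, $G$ a cubic, and $p$ a flex of $G$ whose flex tangent is $L$—this is the local model of a separating $A_5$-singularity of genus-$(0,1)$ type. After using the $G$-action that fixes $p = [0:0:1]$ and the flag structure, we may further take $L = Y$.

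The first step is to homogenize the affine form $y^2 = f(x,y)$ provided by \Cref{lem:X-2}, yielding
\[
F(X, Y, Z) = Y^2 Z^2 - P_3(X, Y) Z - P_4(X, Y),
\]
where $P_3 = \sum_{i+j=3} a_{ij} X^i Y^j$ and $P_4 = \sum_{i+j=4} a_{ij} X^i Y^j$ parametrize the coordinates of $V_3 \otimes E_0^{\otimes 2} \otimes L^{\otimes 2}$. Reducing modulo $Y$ gives $F(X, 0, Z) = -a_{30} X^3 Z - a_{40} X^4$, so the divisibility $Y \mid F$ is equivalent to the two linear conditions $a_{30} = a_{40} = 0$; when they hold one writes $F = Y \cdot G$ with $G = Y Z^2 - (P_3/Y) Z - (P_4/Y)$.

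The second step imposes that $Y = 0$ is the flex tangent of $G$ at $p$. Restriction gives $G(X, 0, 1) = -a_{21} X^2 - a_{31} X^3$, so the intersection multiplicity of $G$ with $\{Y = 0\}$ at $p$ is at least $3$ precisely when $a_{21} = 0$; on the open locus $a_{31} \neq 0$ the multiplicity is exactly $3$, as needed for an ordinary flex, producing a separating $A_5$-singularity.

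Combining the two steps, the three equations $a_{30} = a_{21} = a_{40} = 0$ cut out $X_5^0$ inside $X_2$. Since these are three independent linear coordinates on the affine $G$-representation $V_3 \otimes E_0^{\otimes 2} \otimes L^{\otimes 2}$, they automatically form a regular sequence and define a smooth closed subscheme of codimension $3$—hence a complete intersection. The main technical point will be verifying scheme-theoretic (not merely set-theoretic) equality with $X_5^0$; this follows from the irreducibility of both subschemes together with a matching dimension count (the family of triples consisting of a cubic, a flex point and its flex tangent in $\PP^2$ is $9$-dimensional, corresponding to codimension $3$ inside the $12$-dimensional $X_2$), so the generic coincidence forces equality. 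One must also observe that, although the individual coordinates $a_{30}, a_{21}, a_{40}$ are not $G$-invariant, their simultaneous vanishing locus is, because it encodes the intrinsic factorization-plus-flex condition.
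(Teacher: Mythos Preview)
Your proof is correct and follows essentially the same approach as the paper: translate the factorization $F=\ell\cdot g$ (with $\ell$ the flex tangent at $p$) into the vanishing of $a_{30},a_{40}$, and the flex condition into $a_{21}=0$. One small remark on phrasing: you say you ``use the $G$-action'' to arrange $L=Y$, but in fact the description of $X_2$ in \Cref{lem:X-2} already forces the tangent cone at $p$ to be $y^2$, so any line component through $p$ in a separating $A_5$ configuration is automatically $Y$---no further normalization is needed. Your added dimension count and remark on $G$-invariance are more explicit than the paper's treatment but entirely consistent with it.
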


\begin{proof}
	The element of the representation $V_3 \otimes E_0^{\otimes 2} \otimes L^{\otimes 2}$ are the coefficients of polynomials of the form $p_3(x,y)+p_4(x,y)$ where $p_3$ (respectively $p_4$) is the homogeneous component of degree $3$ (respectively $4$). They define a polynomial $y^2+p_3(x,y)+p_4(x,y)$ whose homogenization is an element of $X_2$. It is clear now that a polynomial of this form is the product of a line and a cubic if and only if the coefficient of $x^3$ and $x^4$ are zero. Moreover, the condition that $(0,0)$ is the only intersection between the line and the cubic is equivalent to asking that the coefficient of $x^2y$ is zero.
\end{proof}

\begin{remark}
	 A straightforward computation gives use the fundamental class of $X_5^0$ and the strategy described at the beginning of \Cref{sec:3} gives us the description of the fundamental class of $\cA_5^0$ in the Chow ring of $\Mtilde_3$.
	We do not write down the explicit description because it is contained inside the ideal generated by the other relations. 
\end{remark}

\subsection*{Fundamental class of $A_n$-singularity}

We finally deal with the computation of the remaining fundamental classes. As usual, our strategy assures us that it is enough to compute the restriction of every fundamental class to every stratum. We do not give detail for every fundamental class. We instead describe the strategy to compute all of them in every stratum and leave the remaining computations to the reader.  

We start with the open stratum $\Mtilde_3 \setminus (\Htilde_3 \cup \Detilde_1)$, which is also the most difficult one. Luckily, we have already done all the work we need for the previous case. We adopt the same exact idea we used for the computation of $\cA_5^0$. 

\begin{remark}
	First of all, we can reduce the computation to the fundamental class of the locus $X_n$ in $(\PP^{14}\times \PP^{2})\setminus X$ parametrizing $(f,p)$ such that $p$ is an $A_h$-singularity for $h\geq n$. As above, $X$ is the closed locus paramatrizing $(f,p)$ such that $p$ is a singular point of $f$ but not an $A$-singularity.  Consider the morphism 
	$$\pi: \PP^{14}\times \PP^{2} \rightarrow \PP^{14}$$
	and consider the restriction 
	$$\pi\vert_{X_n}: X_n \longrightarrow\cA_n\setminus (\Htilde_3 \cup \Detilde_1);$$
	this is finite birational because generically a curve in $\cA_n$ has only one singular point. Therefore it is enough to compute the $H$-equivariant class of $X_n$ in $(\PP^{14}\times \PP^{2})\setminus X$ and then compute the pushforward $\pi_*(X_n)$. This is an exercise with Segre classes.
	We give the description of the relevant strata in the Chow ring of $\Mtilde_3$ in \Cref{rem:relations-Mbar}.
\end{remark}

\begin{proposition}
	In the situation above, we have that 
	$$[X_n]=C_ni_{2,*}(1) \in \ch_{\GL_3}((\PP^{14}\times \PP^2)\setminus X)$$
	where
	$$i_{2,*}(1)=2(h+k-c_1)(h+4k)((h+3k)^2-(c_1+k)(h+2k)+c_2)$$ 
	while $C_2=1$ and $C_n=c_3c_4\dots c_{n}$ for $n\geq 3$ where 
	$$ c_m:= -mc_1+\frac{2m-1}{2}h+(4-m)k  $$ for every $3\leq m \leq 7$.
\end{proposition}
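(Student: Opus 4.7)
I would prove the formula by induction on $n$, the case $n=2$ being Proposition~\ref{prop:i-2}. For $3\le n\le 7$, the goal is to show that $X_n\hookrightarrow X_{n-1}$ is a regular closed embedding of codimension one whose class, after pullback from the ambient, is $c_n$. Granted this, the projection formula applied along the chain
\[
X_n\hookrightarrow X_{n-1}\hookrightarrow\dotsb\hookrightarrow X_2\hookrightarrow(\PP^{14}\times\PP^2)\setminus X
\]
immediately yields $[X_n]=c_n\cdot[X_{n-1}]=c_3c_4\dotsm c_n\cdot[X_2]=C_n\,i_{2,*}(1)$, which is what we want.

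To set this up I would use Lemma~\ref{lem:X-2} to identify $X_2\simeq[V_3\otimes E_0^{\otimes 2}\otimes L^{\otimes 2}/G]$. A point corresponds to a polynomial $g(x,y)=y^2+p_3(x,y)+p_4(x,y)$ with an $A_{\ge 2}$ singularity at the origin, and since $g_{yy}(0,0)=2\ne 0$ the implicit function theorem produces a section $y=\phi(x)$ with $g_y(x,\phi(x))=0$, so that $g$ is equivalent to the Weierstrass normal form $\tilde y^2-h(x)$ with $h(x)=\sum_{m\ge 3}h_m x^m$. The singularity is $A_m$ precisely when $\mathrm{ord}_0\,h=m+1$, so $X_n$ is the subscheme of $X_2$ cut out by $h_3=h_4=\dotsb=h_n=0$, and $X_n\subset X_{n-1}$ is the divisor $\{h_n=0\}$. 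A recursive computation of $h_m$ as an explicit polynomial in the coefficients $a_{ij},b_{ij}$ of $p_3,p_4$ (for example $h_3=a_{30}$ and $h_4=b_{40}-a_{21}^2/4$) exhibits each $h_n$ as a non-trivial equation on $X_{n-1}$, so the embedding is indeed regular of codimension one.

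The crux of the argument is then identifying the first Chern class of the line bundle of which $h_n$ is a section with $c_n$. I would compute the $G$-character carried by $h_n$ on $X_{n-1}$ using the explicit $G$-representation structure on $V_3\otimes E_0^{\otimes 2}\otimes L^{\otimes 2}$, and then lift the result to the ambient Chow ring via the pullback formulas $i_2^*(c_1)=t_1+t_2+t_3$, $i_2^*(h)=2(t_2+t_3)$, $i_2^*(k)=-t_3$ from Proposition~\ref{prop:i-2}; matching weights forces the closed form $c_n=-nc_1+\tfrac{2n-1}{2}h+(4-n)k$. The main difficulty I expect is in this last step: while the reductive part $T\subset B_2\times\gm$ of $G$ is easy to handle, the unipotent factor $\ga^2\subset G$ acts non-trivially on the coefficients of the normal-form expansion, and one must choose representatives for $h_n$ so that the resulting character becomes a well-defined $G$-equivariant line bundle on $X_{n-1}$; organizing these corrections so that they combine into a single formula linear in $n$ is the heart of the computation.
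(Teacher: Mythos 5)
Your proposal is correct and follows essentially the same route as the paper: reduce to the quotient-stack description of $X_2$ from Lemma \ref{lem:X-2}, put the local equation into the form $y^2+h(x)$ (the paper does this via Weierstrass preparation and square completion rather than the implicit function theorem, which amounts to the same reduction), identify $X_n$ as the complete intersection cut out by the vanishing of the coefficients of $x^3,\dots,x^n$ in $h(x)$, and read off each class $c_m$ from the equivariant weight of the corresponding coefficient using the pullback formulas of Proposition \ref{prop:i-2}. Your inductive/divisor-by-divisor phrasing versus the paper's all-at-once complete-intersection phrasing is only a cosmetic difference, and your flagged concern about the unipotent part of $G$ is harmless since it does not affect the equivariant Chow ring.
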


\begin{proof}
  \Cref{prop:i-2} and \Cref{lem:X-2} imply that it is enough to compute the fundamental class of $X_n$ in $X_2$. It is important to remind that the coordinates of $X_2$ are the coefficients of the polynomial $p_3(x,y)+p_4(x,y)$ where $p_3$ and $p_4$ are homogeneous polynomials in $x,y$ of degree respectively $3$ and $4$. Moreover, if we see it as an element of $[\PP^{14}\times \PP^{2}/\GL_3]$, it is represented by the pair $(y^2z^2+p_3(x,y)z+p_4(x,y), [0:0:1])$.  Therefore, we need to find a relation between the coefficients of $p_3$ and $p_4$ such that the point $(0,0):=[0:0:1]$ is an $A_h$-singularity for $h \geq n$.
  
  To do so, we apply Weierstrass preparation theorem. Specifically, we use Algorithm 5.2 in \cite{Ell}, which allows us to write the polynomial $y^2+p_3(x,y) + p_4(x,y)$ in the form $y^2+p(x)y+q(x)$ up to an invertible element in $k[[x,y]]$. The square completion procedure implies that, up to an isomorphism of $k[[x,y]]$, we can reduce to the form $y^2+[q(x)-p(x)^2/4]$. Although $q(x)$ and $p(x)$ are power series, we just need to understand the coefficients of $h(x):=q(x)-p(x)^2/4$ up to degree $8$. Clearly the coefficient of $1$, $x$ and $x^2$ are already zero by construction. In general for $n\geq 3$, if $c_n$ is the coefficient of $x^{n}$ inside $h(x)$, we have that $X_n$ is the complete intersection inside $X_2$ of the hypersurfaces $c_i=0$ for $3\leq i \leq n$. We can use now the description of $X_2$ as a quotient stack (see \Cref{lem:X-2}) to compute the fundamental classes. 
\end{proof}

\begin{remark}
	Notice that for $\cA_5$, we also have the contribution of the closed substack $\cA_5^0$ that we need to remove to get the fundamental class of the non-separating locus. To same is not true for $\cA_3$, because $\cA_3^1$ is contained in the hyperelliptic locus.
\end{remark}

It remains to compute the fundamental class of $\cA_n$ restricted to the other strata. The easiest case is $\Detilde_{1,1,1}$, because clearly there are no $\cA_n$-singularities for $n\geq 3$. Therefore $\cA_n\vert_{\Detilde_{1,1,1}}=0$ for every $n\geq 3$. Regarding $\cA_2$, it enough to compute its pullback through the $6:1$-cover described in Lemma 6.1 of \cite{Per3}. We get the following result.

\begin{proposition}
	The restriction of $\cA_n$ to $\Detilde_{1,1,1}$ is of the form
	$$ 24(c_1^2-2c_2) \in \ZZ[1/6,c_1,c_2,c_3]\simeq  \ch(\Detilde_{1,1,1})$$
	for $n=2$ while it is trivial for $n\geq 3$.  
\end{proposition}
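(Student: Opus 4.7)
The plan is to handle the two cases by directly analyzing the geometry of $\Detilde_{1,1,1}$ via the cover $c_6 \colon \Mtilde_{1,1}^{\times 3} \to \Detilde_{1,1,1}$ from Lemma 6.1 of \cite{Per3}. Any geometric point of $\Detilde_{1,1,1}$ comes from three pointed elliptic curves in $\Mtilde_{1,1} = \Mtilde_{1,1}^2$ glued at separating nodes, so its singularities (besides those three separating nodes, which are $A_1$) are exactly the singularities of the three elliptic components. Because $\Mtilde_{1,1}$ only contains curves with at worst an $A_2$-singularity (a cusp), there cannot be any $A_n$-singularity for $n \geq 3$ on a curve parametrized by $\Detilde_{1,1,1}$. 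This immediately gives $\cA_n \cap \Detilde_{1,1,1} = \emptyset$, hence $\cA_n\vert_{\Detilde_{1,1,1}} = 0$ for $n \geq 3$.

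For $n=2$, the next step is to pull back $\cA_2\vert_{\Detilde_{1,1,1}}$ along $c_6$. Since $c_6$ is finite flat (of degree $6$), the pullback is just the fundamental class of the scheme-theoretic preimage. Set-theoretically, this preimage is the union $X_1 \cup X_2 \cup X_3$, where $X_i \subset \Mtilde_{1,1}^{\times 3}$ is the codimension-$2$ locus on which the $i$-th elliptic component is cuspidal; the three $X_i$ are irreducible and intersect only in codimensions $\geq 4$, so as a cycle we have $[X_1 \cup X_2 \cup X_3] = [X_1] + [X_2] + [X_3]$.

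To compute $[X_i]$, I would use the presentation $\Mtilde_{1,1} \simeq [\AA^2/\gm]$ coming from Weierstrass form $y^2 = x^3+ax+b$, where $\gm$ acts with weights $4$ and $6$. The cuspidal locus is the origin $\cB\gm \hookrightarrow [\AA^2/\gm]$, whose fundamental class is the product of the $\gm$-weights, namely $24\, t^2$, where $t$ is the standard generator. Hence $[X_i] = 24\, t_i^2$, and summing yields
\[
c_6^*\bigl(\cA_2\vert_{\Detilde_{1,1,1}}\bigr) = 24\bigl(t_1^2+t_2^2+t_3^2\bigr) = 24\bigl(c_1^2-2c_2\bigr)
\]
in $\ch(\Mtilde_{1,1}^{\times 3})$, where $c_1,c_2,c_3$ are the elementary symmetric polynomials in $t_1,t_2,t_3$.

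Finally, I would conclude by descending the identity to $\ch(\Detilde_{1,1,1})$. With $\ZZ[1/6]$-coefficients, the order $6 = |S_3|$ is invertible, and under the identification $\ch(\Detilde_{1,1,1}) \simeq \ZZ[1/6,c_1,c_2,c_3]$ the generators $c_i$ pull back to the elementary symmetric polynomials in the $t_j$. Thus $c_6^*$ is injective, and since $24(c_1^2-2c_2)$ on $\Detilde_{1,1,1}$ pulls back to the same class, we get $\cA_2\vert_{\Detilde_{1,1,1}} = 24(c_1^2-2c_2)$, as claimed. The main obstacle — or at least the only nontrivial input — is the computation $[X_i] = 24 t_i^2$, i.e.\ knowing the Weierstrass weights that govern the class of the cuspidal point in $\Mtilde_{1,1}^2$; everything else is bookkeeping with the cover $c_6$ and the triviality observation for $n\geq 3$.
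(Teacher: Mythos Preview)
Your proposal is correct and follows essentially the same approach as the paper: the paper's proof consists of the observation that curves in $\Detilde_{1,1,1}$ carry no $A_n$-singularities for $n\geq 3$, together with the instruction to compute the $n=2$ case by pulling back through the $6{:}1$ cover $c_6\colon \Mtilde_{1,1}^{\times 3}\to \Detilde_{1,1,1}$ of Lemma~6.1 of \cite{Per3}. You have simply written out the details of that pullback, using the standard Weierstrass presentation $\Mtilde_{1,1}\simeq [\AA^2/\gm]$ to get $[X_i]=24t_i^2$ and then symmetrizing.
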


As far as $\Detilde_{1,1} \setminus \Detilde_{1,1,1}$ is concerned, we have that $\cA_n \cap \Detilde_{1,1} = \emptyset$ for $n\geq 4$. Moreover, $\cA_3 \cap \Detilde_{1,1}=\emptyset$ because every tacnode is separating in the stratum $\Detilde_{1,1}\setminus \Detilde_{1,1,1}$. Therefore again, we only need to do the computation for $\cA_2$, which is straightforward.

\begin{proposition}
	The restriction of $\cA_n$ to $\Detilde_{1,1}\setminus \Detilde_{1,1,1}$ is of the form
	$$ 24(t^2+c_1^2-2c_2) \in \ZZ[1/6,c_1,c_2,t]\simeq  \ch(\Detilde_{1,1}\setminus \Detilde_{1,1,1})$$
	for $n=2$ while it is trivial for $n\geq 3$.  
\end{proposition}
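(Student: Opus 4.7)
The proof splits into the cases $n\geq 3$ and $n=2$. \textbf{For $n\geq 3$}, the plan is to show $\cA_n\cap(\Detilde_{1,1}\setminus\Detilde_{1,1,1})=\emptyset$. Any curve $C$ in this stratum has dual graph a chain of three components joined by the two separating nodes, and stability (i.e.\ $\omega_C(\text{marked points})$ ample on each component) combined with $p_a(C)=3$ forces each of the three components to have arithmetic genus exactly $1$. Any $A_n$-singularity with $n\geq 3$ lies in the interior of one such component; applying \Cref{rem:genus-count} to the partial normalization at that point forces the genus of $\widetilde C$ to be negative in every case, with the single exception of $n=3$ with disconnected normalization, i.e.\ a separating tacnode. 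Since separating tacnodes belong to $\cA_3^1\subset \Htilde_3$ and not to $\cA_3=\overline{\mathrm{im}(\rho_3^{\mathrm{ns}})}$, the claim follows.

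\textbf{For $n=2$}, the stack $\cA_2=\widetilde{\cA}_{\geq 2}$ is the locus of curves with at least one non-nodal singularity, so the previous genus analysis shows that on $\Detilde_{1,1}\setminus\Detilde_{1,1,1}$ only cusps contribute, and they can occur on any one of the three elliptic components. My plan is to pull back along the $C_2$-cover
$$\pi_2\colon (\Mtilde_{1,2}\setminus \Mtilde_{1,1})\times \Mtilde_{1,1}\times \Mtilde_{1,1}\longrightarrow \Detilde_{1,1}\setminus \Detilde_{1,1,1}$$
swapping the two tail factors (the cover used in the $\delta_1^c$ computation above) to identify the three cusp contributions and then take $C_2$-invariants. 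Under the Weierstrass presentation $\Mtilde_{1,1}\simeq[\AA^2/\gm(4,6)]$, the cusp locus is the origin, with equivariant fundamental class $4\cdot 6\cdot\lambda_1^2=24\lambda_1^2$; the same expression is obtained on the middle factor by pullback along $\Mtilde_{1,2}\setminus \Mtilde_{1,1}\hookrightarrow \Ctilde_{1,1}\to \Mtilde_{1,1}$, giving $24t^2$ with $t=\lambda_1$ the Hodge class of the middle component. Summing the three contributions on the cover yields $24(t^2+t_1^2+t_2^2)$, and passing to $C_2$-invariants with $c_1=t_1+t_2$ and $c_2=t_1t_2$ gives $t_1^2+t_2^2=c_1^2-2c_2$, so the class descends to
$$24(t^2+c_1^2-2c_2),$$
as claimed.

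The only delicate step is the genus bookkeeping that eliminates all non-separating $A_n$-singularities on a chain of elliptic components; the contributions in the $n=2$ case are then standard Euler classes on $\Mtilde_{1,1}$, and the symmetrization under $C_2$ is formal.
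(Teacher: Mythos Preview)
Your argument is correct and follows exactly the approach the paper takes: for $n\geq 4$ one rules out any $A_n$-point by the genus drop of \Cref{rem:genus-count} applied to a genus~$1$ piece, for $n=3$ one observes that any surviving tacnode is separating (hence lies in $\cA_3^1$, not in $\cA_3=\overline{\mathrm{im}\,\rho_3^{\rm ns}}$), and for $n=2$ one sums the three cuspidal-tail contributions $24t_i^2$ on the $C_2$-cover and symmetrises.

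Two cosmetic remarks. First, there are no marked points on $\Mbar_3$, so the stability condition is simply that $\omega_C$ is ample; your genus bookkeeping is unaffected. Second, your sentence ``only cusps contribute'' is fine once one notes that the separating-tacnode locus you found for $n=3$ sits in the closure of the cusp-on-the-middle-component locus inside $\widetilde{\cA}_{\geq 2}\cap(\Detilde_{1,1}\setminus\Detilde_{1,1,1})$ (a cusp on the middle genus~$1$ piece can degenerate to a tacnode), so the intersection is already pure of the expected codimension~$2$ and no extra component appears.
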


We now concentrate on the stratum $\Detilde_{1}\setminus \Detilde_{1,1}$. Recall that we have the isomorphism 
$$\Detilde_{1} \setminus \Detilde_{1,1}\simeq (\Mtilde_{2,1}\setminus \ThTilde_1) \times \Mtilde_{1,1}$$ 
as in Section 4 of \cite{Per3} and we denote by $t_0,t_1$ the generators of the Chow ring of $\Mtilde_{2,1}$ and by $t$ the generator of the Chow ring of $\Mtilde_{1,1}$.

\begin{proposition}
	We have the following description of the $A_n$-strata in the Chow ring of $\Detilde_{1}\setminus \Detilde_{1,1}$:
	\begin{itemize}
		\item[$\bullet$] the fundamental classes of $\cA_5$, $\cA_6$ and $\cA_7$ are trivial,
		\item[$\bullet$] the fundamental class of $\cA_4$ is equal to $40(t_0+t_1)^2t_0t_1$,
		\item[$\bullet$] the fundamental class of $\cA_3$ is equal to 
		$-24(t_0+t_1)^3 + 48(t_0+t_1)t_0t_1$,
		\item[$\bullet$] the fundamental class of $\cA_2$ is equal to 
		$24(t_0+t_1)^2 - 48t_0t_1+24t^2$.
	\end{itemize} 
\end{proposition}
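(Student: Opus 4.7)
The plan is to analyze, for each $n$, where an $A_n$-singularity can sit on a curve in $\Detilde_1\setminus\Detilde_{1,1}$. Such a curve is $C=C_2\cup_q C_1$ with $C_2$ of arithmetic genus $2$, $C_1$ of arithmetic genus $1$ and $q$ the separating node. Applying the genus count of \Cref{rem:genus-count} to a hypothetical additional $A_n$-singularity $p\neq q$: if $n\geq 5$ and $p$ is non-separating on its component, the partial normalization would have genus at most $2-(h+1)<0$ (with $h=\lfloor n/2\rfloor$), which is impossible. If $n\geq 5$ and $p$ is separating on $C_2$, the two pieces of the partial normalization would have genera summing to $g(C_2)-h\leq 0$, which combined with stability of the $1$-pointed rational tails forces $n\leq 4$ (and actually produces only $\cA_5^0$, already treated separately). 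The same discussion rules out $A_n$ on $C_1$ for $n\geq 3$. This explains the vanishing of $\cA_5$, $\cA_6$, $\cA_7$ and reduces the remaining cases to computing the class of $A_n$-loci on the $\Mtilde_{2,1}\setminus\ThTilde_1$ factor (for $n=3,4$), plus, for $n=2$, an independent contribution from cusps on the $\Mtilde_{1,1}$ factor.

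For the genus $2$ contribution I will use the presentation (recalled in the proof of \Cref{lem:chow-ring-C2} and of \Cref{lem:a-5-0})
$$\Mtilde_{2,1}\setminus\ThTilde_1\simeq\Ctilde_2\setminus\ThTilde_1\simeq[\widetilde{\AA}(6)\setminus 0/B_2],$$
under which $t_0,t_1$ are the characters of the maximal torus $T\subset B_2$. A pair $(f,s)$ with $f\in\AA(6)$ defines a $1$-pointed genus $2$ curve with an $A_n$-singularity at the marked section exactly when $f$ has a root of multiplicity $\geq n+1$. Since $B_2$ is special, I can pass to $T$ and use the standard formula for the class of the locus of sextics with a root of prescribed multiplicity (cf.\ \Cref{rem:gener}, used for $\cA_5^0$ above): for multiplicity $5$ this gives $40(t_0+t_1)^2t_0t_1$, and for multiplicity $4$ and $3$ the analogous products yield the claimed expressions for $\cA_4$ and for the genus $2$ part of $\cA_3$ and $\cA_2$.

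For $n=2$ I additionally need the class of the cuspidal locus in the $\Mtilde_{1,1}$ factor. Writing $\Mtilde_{1,1}\simeq[\AA^2/\gm]$ with generator $t$, the cuspidal locus is the origin, whose class equals $24t^2$ (the same way $\delta_0=12\lambda_1$ on $\Mtilde_{1,1}$ is computed via the Weierstrass presentation, doubled because our $A_2$-stratum is a node-plus-cusp discriminant). Summing the two contributions on the product stratum gives
$$\cA_2\vert_{\Detilde_1\setminus\Detilde_{1,1}}=\bigl(24(t_0+t_1)^2-48t_0t_1\bigr)+24t^2,$$
as claimed. The only slightly delicate point is the coefficient for the multiplicity-$4$ and multiplicity-$3$ root loci: one must make sure to take the reduced class in $[\widetilde{\AA}(6)/B_2]$ (not the class of the full discriminant, which overcounts nodal degenerations) and check that the multiplicities coming from the torus computation match the geometric multiplicity of the singularity. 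This is the main obstacle, but it is handled by exactly the same Weierstrass preparation bookkeeping already performed in the proof of \Cref{lem:a-5-0}, so once those calculations are imported, the four formulas follow directly.
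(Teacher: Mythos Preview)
Your approach matches the paper's: rule out $n\geq 5$ by the genus count, reduce $n=3,4$ to the root-multiplicity locus in the $B_2$-presentation of $\Mtilde_{2,1}\setminus\ThTilde_1$ exactly as in \Cref{lem:a-5-0}, and for $n=2$ add the cuspidal contribution from the $\Mtilde_{1,1}$ factor. Two minor slips to clean up: the $A_n$-singularity on the genus-$2$ curve need not lie \emph{at the marked section} (the condition is only that $f$ have a root of multiplicity $\geq n+1$ somewhere), and the class $24t^2$ is simply the class of the origin in $\Mtilde_{1,1}\simeq[\AA^2/\gm]$, i.e.\ the product of the torus weights $4$ and $6$, not a ``doubled $\delta_0$''.
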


\begin{proof}
	If $n=6,7$ the intersection of $\cA_n$ with $\Detilde_{1}\setminus \Detilde_{1,1}$ is empty. Notice that if $n=5$, it is also trivial as we are interested in non-separating singularities. It remains to compute the case for $n=2,3,4$. It is clear that if $n=3,4$, the factor $\Mtilde_{1,1}$ of the product does not give a contribution, therefore it is enough to describe the fundamental class of the locus of $A_n$-singularities in $\Mtilde_{2,1}\setminus \ThTilde_1$ for $n=3,4$. We do it exactly as in the proof of \Cref{lem:a-5-0}. The computation for $n=2$ again is straightforward. 
\end{proof}

Last part of the computations is the restriction to the hyperelliptic locus.

\begin{remark}\label{rem:sep-tac}
	We recall the stratification
	$$ \cA_3^1\setminus \Detilde_1  \subset \Xi_1\setminus \Detilde_1 \subset \Htilde_3\setminus \Detilde_1$$ 
	defined in the following way: $\Xi_1$ parametrizes triplets  $(Z,L,f)$ in $\Htilde_3\setminus \Detilde_1$ such that $Z$ is genus $0$ curve with one node whereas $\cA_3^1$ parametrizes triplets $(Z,L,f)$ in $\Xi_1$ such that $f$ vanishes at the node. Using the results in Section 2 of \cite{Per3}, we get that 
	\begin{itemize}
		\item $\ch(\Htilde_3\setminus (\Detilde_1 \cup \Xi_1)) \simeq \ZZ[1/6,s,c_2]/(f_9)$,
		\item $\ch(\Xi_1\setminus (\Detilde_1 \cup \cA_3^1))\simeq \ZZ[1/6,c_1,c_2]$,
		\item $\ch(\cA_3^1 \setminus \Detilde_1)\simeq \ZZ[1/6,s]$
	\end{itemize}
	where $f_9$ is the restriction of the relation $c_9$ to the open stratum (see Remark 2.15 of \cite{Per3}). Furthermore, the normal bundle of the closed immersion
	$$ \Xi_1\setminus (\Detilde_1\cup \cA_3^1) \into \Htilde_3 \setminus (\Detilde_1 \cup \cA_3^1)$$ 
	is equal to $-c_1$ whereas the normal bundle of the closed immersion 
	$$\cA_3^1 \setminus \Detilde_1 \into \Xi_1\setminus \Detilde_1$$
	is equal to $-2s$. 
\end{remark}

\Cref{lem:gluing} implies that we can compute the restriction of $\cA_n$ to the hyperelliptic locus using the stratification $$ \cA_3^1\setminus \Detilde_1  \subset \Xi_1\setminus \Detilde_1 \subset \Htilde_3\setminus \Detilde_1,$$
i.e. it is enough to compute the restriction of $\cA_n$ to $\cA_3^1\setminus \Detilde_1$, $\Xi_1\setminus (\Detilde_1\cup \cA_3^1)$ and $\Htilde_3\setminus (\Detilde_1 \cup \Xi_1)$.

\begin{lemma}
	The restriction of $\cA_n$ to $\cA_3^1\setminus \Detilde_1$ is empty if $n\geq 3$ whereas we have the equality $$[\cA_2]\vert_{ \cA_3^1\setminus \Detilde_1}=72s^2$$
	in the Chow ring of $\cA_3^1\setminus \Detilde_1$.

	Moreover, the restriction of $\cA_n$ to $\Xi_1\setminus (\Detilde_1\cup\cA_3^1)$ is empty if $n\geq 4$ whereas we have the two equalities
	\begin{itemize}
	\item$ [\cA_2]\vert_{\Xi_1\setminus (\Detilde_1 \cup\cA_3^1)} =24c_1^2-48c_2$ 
	\item$ [\cA_3]\vert_{\Xi_1\setminus (\Detilde_1\cup \cA_3^1)} = 24c_1^3-72c_1c_2$
	\end{itemize}
	in the Chow ring of $\Xi_1\setminus (\Detilde_1\cup \cA_3^1)$.
\end{lemma}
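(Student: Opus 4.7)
The plan is to use the cyclic cover description of $\Htilde_3 \setminus \Detilde_1$ recalled in \Cref{rem:sep-tac} and \Cref{prop:descr-hyper}. An object of either stratum is a triple $(Z, L, f)$ where $Z$ is a twisted rational curve with exactly one node, $L$ is a line bundle and $f \colon L^{\otimes 2} \to \cO_Z$ defines the double cover $C \to Z$; the singularity of $C$ at a smooth point of $Z$ where $f$ vanishes to order $k$ is of type $A_{k-1}$, so the $\cA_n$-loci pull back to degeneracy loci for the vanishing of $f$. For the vanishing assertions, \Cref{rem:genus-count} together with the presence of a separating tacnode on $\cA_3^1$ exhausts the genus budget of the two elliptic tails, ruling out any additional $A_n$-singularity with $n \geq 3$; on $\Xi_1 \setminus (\Detilde_1 \cup \cA_3^1)$ a case analysis based on whether the node of $Z$ is stacky together with the degree of $L^{\otimes -2}$ and the non-vanishing of $f$ at the node rules out $A_n$-singularities with $n \geq 4$.

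For $[\cA_2]\vert_{\cA_3^1 \setminus \Detilde_1}$, restricting $\cA_2$ corresponds to putting a cusp on one of the two elliptic tails. Using the $C_2$-quotient presentation of $\cA_3^1 \setminus \Detilde_1$ appearing in the proof of \Cref{prop: rho-3-1-surj}, the cuspidal locus is the image under a finite morphism of two symmetrized boundary divisors, and I would compute its class by an excess intersection argument entirely parallel to the computation of $\delta_1^c\vert_{\Detilde_{1,1}}$ carried out earlier in this section, then rewrite the answer in the single generator $s$ inherited from the closed immersion into $\Xi_1$. For $[\cA_2]$ and $[\cA_3]$ on $\Xi_1 \setminus (\Detilde_1 \cup \cA_3^1)$, the locus of cubic or higher-order vanishing of $f$ at a smooth point of $Z$ is a degeneracy locus of the appropriate jet bundle of $L^{\otimes -2}$ on the universal twisted curve. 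I would realize $\Xi_1 \setminus (\Detilde_1 \cup \cA_3^1)$ as the classifying stack of a group whose natural rank $2$ representation has Chern classes $c_1, c_2$, express the universal section $f$ and its jets equivariantly, and apply a Thom--Porteous style formula together with a pushforward from the universal curve to obtain the stated polynomials in $c_1, c_2$.

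The main obstacle will be the bookkeeping in this final Chern class computation: one must identify precisely which jet bundle on the universal twisted nodal genus $0$ curve carries the section $f$, track the $\mu_2$-stacky structure at the node when writing the equivariant fundamental class, and verify that the tautological pushforward returns exactly $24(c_1^2 - 2c_2)$ and $24c_1(c_1^2 - 3c_2)$ rather than any cohomologous polynomial allowed by relations coming from the rest of the hyperelliptic locus. A secondary difficulty is that some $A_n$-configurations could a priori specialize into the excluded loci $\Detilde_1$ or $\cA_3^1$; one must check that the degeneracy locus actually computes the fundamental class on the open stratum, not an enlargement of it. The factor $24$ appearing throughout should arise from a combination of the hyperelliptic involution, the $C_2$-symmetry swapping the two components of $Z$, and the automorphism count at the relevant singularity.
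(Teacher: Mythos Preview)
The paper's own proof is a single line: ``This is an easy exercise and we leave it to the reader.'' So there is no detailed argument to compare against; one has to infer the intended route from the surrounding text. Your genus--count reasoning for the emptiness assertions is fine and is surely what is meant. For the three numerical classes, however, your plan is considerably heavier than what the paper has in mind.

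The subsection immediately following this lemma treats the open stratum $\Htilde_3\setminus(\Xi_1\cup\Detilde_1)$ by observing that, in the cyclic--cover description, the $\cA_n$--locus is exactly the locus where the branching section $f$ acquires a root of multiplicity at least $n+1$, and then computing this as an equivariant class in a space of binary forms via the formulas in \Cref{rem:gener}. The same mechanism works verbatim on $\Xi_1\setminus(\Detilde_1\cup\cA_3^1)$ and on $\cA_3^1\setminus\Detilde_1$: the explicit quotient presentations of these strata from \cite{Per3} (already invoked in \Cref{rem:sep-tac}) exhibit $f$ as a tuple of binary forms on the components of $Z$, and the condition ``$f$ has a root of multiplicity $\geq n+1$ on some component'' is a torus--equivariant class one reads off directly. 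For instance, on $\Xi_1\setminus(\Detilde_1\cup\cA_3^1)$ the two components contribute symmetrically and the answer is a power sum in the Chern roots, giving $24(c_1^2-2c_2)$ and $24c_1(c_1^2-3c_2)$; the coefficient $24$ (and $72$) is the leading coefficient produced by the discriminant--type formula of \Cref{rem:gener}, \emph{not} an automorphism or involution count as you suggest.

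Your proposed route through the $C_2$--quotient model of $\cA_3^1$ (from \Cref{prop: rho-3-1-surj}) plus excess intersection, and through jet bundles with Thom--Porteous on $\Xi_1$, would eventually produce the same numbers, but it forces you to translate between two presentations of $\cA_3^1$ (the hyperelliptic generator $s$ of \Cref{rem:sep-tac} versus the invariants $d_1,d_2,b$), and to track $\mu_2$--stacky structure that plays no role once one works with the explicit spaces of forms. None of that bookkeeping is needed; this is why the authors are comfortable calling it an exercise.
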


\begin{proof}
	This is an easy exercise and we leave it to the reader.
\end{proof}

\subsection*{Restriction to $\Htilde_3 \setminus (\Xi_2 \cup \Detilde_1)$}

It remains to compute the restriction of $\cA_n$ to the open stratum $\Htilde_3 \setminus (\Xi_2 \cup \Detilde_1)$.

\begin{remark}\label{rem:ar-vis-2}
	 We know that $\Htilde_3 \setminus (\Detilde_1\cup \Xi_1)$ is isomorphic to the quotient stack $[\AA(8)\setminus 0/(\GL_2/\mu_4)]$, where $\AA(8)$ is the space of homogeneous forms in two variables of degree $8$. Moreover, we have the isomorphism $\GL_2/\mu_4 \simeq \PGL_2\times \gm$. See Lemma 2.18 of \cite{Per3} or \cite{ArVis} for a more detailed discussion.
	 
	 Therefore we have that 
	 $$\ch(\Htilde_3\setminus (\Detilde_1\cup \Xi_1))\simeq \ZZ[1/6,s,c_2]$$ where $s$ is the first Chern class of the standard representation of $\gm$ while $c_2$ is the generator of the Chow ring of $\cB\PGL_2$. Notice that we have a morphism 
	 $$ \GL_2 \longrightarrow \PGL_2 \times \gm$$
	 defined by the association $A\mapsto ([A],\det{A}^2)$ which coincides with the natural quotient morphism 
	 $$ q:\GL_2 \longrightarrow \GL_2/\mu_4.$$
	 We have that $q^*(s)=2d_1$ and $q^*(c_2)=d_2$, where $d_1$ and $d_2$ are the first and second Chern class of the standard representation of $\GL_2$. 	
\end{remark}
 
Using the description of $\Htilde_3\setminus (\Detilde_1\cup \Xi_1)$ as a quotient stack highlighted in the previous remark, the restriction of $\cA_n$ to $\Htilde_3\setminus  (\Detilde_1\cup \Xi_1)$ is the locus $H_n$ which parametrizes forms $f \in \AA(8)$ such that $f$ has a root of multiplicity at least $n+1$. Thanks to \Cref{rem:ar-vis-2}, it is enough to compute the fundamental class of $H_n$ after pulling it back through $q$, i.e. compute the $\GL_2$-equivariant fundamental class of $H_n$. Because $\GL_2$ is a special group, we can reduce to do the computation of the $T$-equivariant fundamental class of $H_n$, where $T$ is the torus of diagonal matrices in $\GL_2$. Therefore, we can use the formula in \Cref{rem:gener} to get the explicit description of the $T$-equivariant class of $H_n$.

\section{The Chow ring of $\Mbar_3$ and the comparison with Faber's result}\label{sec:4}

We are finally ready to present our description of the Chow ring of $\Mbar_3$. 

\begin{theorem}\label{theo:chow-ring-m3bar}
	 Let $\kappa$ be the base field of characteristic different from $2,3,5,7$. The Chow ring of $\Mbar_3$ with $\ZZ[1/6]$-coefficients is the quotient of the graded polynomial algebra 
	 $$\ZZ[1/6,\lambda_1,\lambda_2,\lambda_3,\delta_{1},\delta_{1,1},\delta_{1,1,1},H]$$
	 where 
	 \begin{itemize}
	 	\item[] $\lambda_1,\delta_1,H$ have degree $1$, \item[]$\lambda_2,\delta_{1,1}$ have degree $2$, \item[]$\lambda_3,\delta_{1,1,1}$ have degree $3$.
 	\end{itemize}
 	The quotient ideal is generated by 15 homogeneous relations: 
	 \begin{itemize}
	 	\item[] $[\cA_2]$, which is in codimension $2$;
	 	\item[] $[\cA_3], [\cA_3^{1}], \delta_1^c, k_1(1),k_{1,1}(2)$, which are in codimension $3$,
	 	\item[] $[\cA_4], \delta_{1,1}^c, k_{1,1}(1), k_{1,1,1}(1), k_{1,1,1}(4), m(1),k_h,k_1(2)$, which are in codimension $4$,
	 	\item[] $ k_{1,1}(3)$, which is in codimension $5$.
	 \end{itemize}
\end{theorem}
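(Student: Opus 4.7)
The plan is to combine Theorem \ref{theo:main}, which describes $\ch(\Mtilde_3)$ as $R/I$ with $R=\ZZ[1/6][\lambda_1,\lambda_2,\lambda_3,H,\delta_1,\delta_{1,1},\delta_{1,1,1}]$, with the localization sequence
\[
\ch(\Mtilde_3\setminus \Mbar_3)\longrightarrow \ch(\Mtilde_3)\longrightarrow \ch(\Mbar_3)\longrightarrow 0.
\]
By Corollary \ref{cor:relations}, the kernel of the right-hand surjection is generated as an ideal by the images of the fundamental classes
\[
[\cA_7],\;[\cA_6],\;[\cA_5^0],\;[\cA_5^{\rm ns}],\;[\cA_4],\;[\cA_3^1],\;[\cA_3^{\rm ns}],\;[\cA_2],\;\delta_1^c,\;\delta_{1,1}^c.
\]
All these classes have been computed explicitly in Section \ref{sec:3}. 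Hence, writing $J\subset R$ for the ideal generated by (representatives of) these ten classes, the first step is simply to assert that $\ch(\Mbar_3)=R/(I+J)$, where $I$ is the ideal of Theorem \ref{theo:main}.

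The substance of the proof is then a reduction of the combined list of generators of $I+J$ to the 15 listed in the statement. I would proceed by bookkeeping in codimension, in the following order. In codimension $2$, $[\cA_2]$ is the only generator appearing; keep it. In codimension $3$, the newly added classes $[\cA_3]$, $[\cA_3^1]$, $\delta_1^c$ together with $k_1(1)$ and $k_{1,1}(2)$ from $I$ should suffice; one checks that $k_{1,1,1}(2)$, $k_{1,1,1}(3)$, and several other degree-$3$ pieces of $I$ lie in the ideal generated by these together with $[\cA_2]$ (multiplied by a degree-$1$ generator). In codimension $4$, one adds $[\cA_4]$, $\delta_{1,1}^c$, $k_{1,1}(1)$, $k_{1,1,1}(1)$, $k_{1,1,1}(4)$, $m(1)$, $k_h$, $k_1(2)$ and verifies that $[\cA_5^0]$, $[\cA_5^{\rm ns}]$, $[\cA_6]$, $[\cA_7]$, $m(2)$, $m(3)$, $r$, and the remaining degree-$4$ pieces of $I$ already belong to the ideal generated by the surviving relations and the lower-codimension ones. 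Finally, $k_{1,1}(3)$ is the last needed generator in codimension $5$.

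Concretely, each redundancy check is a linear-algebra calculation in the finite-dimensional graded piece $R_d/(I+J)_{<d}$ after fixing a monomial order. The inputs are the explicit formulas computed at the end of Section \ref{sec:3}: in particular, the presentations of $\delta_1^c$ and $\delta_{1,1}^c$ (the first of which already eliminates $\delta_{1,1,1}$ as an independent generator modulo lower-degree relations), the Segre-class pushforward formulas giving $[\cA_n]$ in the open stratum $\Mtilde_3\setminus(\Htilde_3\cup \Detilde_1)$, and the restrictions to $\Htilde_3\setminus\Detilde_1$, $\Detilde_1\setminus\Detilde_{1,1}$, $\Detilde_{1,1}\setminus\Detilde_{1,1,1}$, $\Detilde_{1,1,1}$ that were patched together via Lemma \ref{lem:gluing}. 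The redundancies highlighted in Theorem \ref{theo:main} ($h(2)$, $h(3)$, $d_1(1)$ lie in the ideal generated by the other relations of $I$) persist, and one observes additional collapses once the $[\cA_n]$ are added.

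The main obstacle is purely computational: it is the combinatorial explosion of candidate reductions when one has roughly two dozen generators spread across codimensions $2$--$5$ in a polynomial ring in seven variables. The bottleneck is proving that no further non-trivial relation survives in any single codimension, which ultimately forces a Gröbner-basis style verification (ideal membership in each relevant graded piece). Once that verification is carried out degree by degree, the stated presentation follows and independence of the 15 generators modulo the others in lower codimension closes the argument. The comparison with Faber's presentation announced at the end of the introduction then becomes a bijective identification between our generators and his, working modulo the Mumford relation and the relations just computed.
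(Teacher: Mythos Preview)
Your proposal is correct and follows essentially the same approach as the paper: the theorem is stated as a summary of the preceding work, with no separate proof environment, and the argument is precisely the combination of Theorem~\ref{theo:main}, Corollary~\ref{cor:relations}, the explicit computations of Section~\ref{sec:3}, and a final algebraic reduction of the generating set of $I+J$. One small bookkeeping slip: $k_{1,1,1}(2)$ and $k_{1,1,1}(3)$ are not degree-$3$ relations (the pushforward from $\Detilde_{1,1,1}$ already contributes codimension $3$, so these land in degree $\geq 4$), but this does not affect the logic of your reduction.
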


\begin{remark}\label{rem:relations-Mbar}
	We write the relations explicitly. 
	\begin{itemize}

		\item[]
		\begin{equation*}
			\begin{split}
			[\cA_2]=24(\lambda_1^2-2\lambda_2)
			\end{split}
		\end{equation*}
		\item[]
		\begin{equation*}
			\begin{split}
				[\cA_3]&= 36\lambda_1^3 + 10\lambda_1^2H + 21\lambda_1^2\delta_1 - 92\lambda_1\lambda_2 - 4\lambda_1H^2 + 18\lambda_1H\delta_1 + \\ & +72\lambda_1\delta_1^2 +
				+ 88\lambda_1\delta_{1,1} - 20\lambda_2H + 56\lambda_3 - 2H^3 + 9H^2\delta_1 + 54H\delta_1^2 + \\ & + 87\delta_1^3 -
				4\delta_1\delta_{1,1}+ 56\delta_{1,1,1}
			\end{split}
		\end{equation*}

		\item[]
		\begin{equation*}
			\begin{split}
				[\cA_3^1]=\frac{H}{2}(\lambda_1+3H+\delta_1)(\lambda_1+H+\delta_1)
			\end{split}
		\end{equation*}
		 
		\item[]
		\begin{equation*}
			\begin{split}
				\delta_1^c&=6(\lambda_1^2\delta_1 + 2\lambda_1H\delta_1 + 6\lambda_1\delta_1^2 + 4\lambda_1\delta_{1,1} + H^2\delta_1 + 6H\delta_1^2 -\\ & - 4H\delta_{1,1}
				+ 9\delta_1^3 - 8\delta_1\delta_{1,1} + 12\delta_{1,1,1})
			\end{split}
		\end{equation*}
		\item[]
		\begin{equation*}
			\begin{split}
			k_1(1)=\delta_1\Big(\frac{1}{4}\lambda_1^2 + \frac{1}{2}\lambda_1H + 2\lambda_1\delta_1 + \lambda_2 + \frac{1}{2}H^2 + H\delta_1^2 + \frac{7}{4}\delta_1^2 -\delta_{1,1}\Big)
			\end{split}
		\end{equation*} 
		\item[]
		\begin{equation*}
			\begin{split}
				k_{1,1}(2)=\delta_{1,1}(3\lambda_1 + H + 3\delta_1)
			\end{split}
		\end{equation*} 
		\item[] 
		\begin{equation*}
			\begin{split}
				[\cA_4]&=36\lambda_1^4 + \frac{1048}{27}\lambda_1^3H + 66\lambda_1^3\delta_1 - 92\lambda_1^2\lambda_2 - \frac{146}{81}\lambda_1^2H^2 +
				\frac{517}{9}\lambda_1^2H\delta_1 + \\ & + 207\lambda_1^2\delta_1^2 - 176\lambda_1^2\delta_{1,1} - 84\lambda_1\lambda_2H + 56\lambda_1\lambda_3 +
				\frac{16}{81}\lambda_1H^3 +\\ & + \frac{3272}{81}\lambda_1H^2\delta_1 +  \frac{1282}{9}\lambda_1H\delta_1^2 + 222\lambda_1\delta_1^3 -
				340\lambda_1\delta_1\delta_{1,1} + \\ & + 56\lambda_1\delta_{1,1,1} + 8\lambda_2H^2 + \frac{130}{27}H^4  + \frac{2041}{81}H^3\delta_1 +
				\frac{4957}{81}H^2\delta_1^2 + \\ & + \frac{2101}{27}H\delta_1^3 + 45\delta_1^4 - 72\delta_1^2\delta_{1,1}
			\end{split}
		\end{equation*}
		\item[]
		\begin{equation*}
			\begin{split}
				\delta_{1,1}^c=24\big(\delta_{1,1}(\lambda_1+\delta_1)^2+\delta_{1,1,1}\big)
			\end{split}
		\end{equation*} 
		\item[]
		\begin{equation*}
			\begin{split}
				k_{1,1}(1)=\delta_{1,1}(\delta_{1,1}-\lambda_2-(\lambda_1+\delta_1)^2)
			\end{split}
		\end{equation*} 
		\item[] 
		\begin{equation*}
			\begin{split}
				k_{1,1,1}(1)=\delta_{1,1,1}(\lambda_1 + \delta_1)
			\end{split}
		\end{equation*}
		\item[]
		\begin{equation*}
			\begin{split}
				k_{1,1,1}(4)=H\delta_{1,1,1}
			\end{split}
		\end{equation*}
		\item[]
		\begin{equation*}
			\begin{split}
				m(1)&=12\lambda_1^4 - \frac{7}{3}\lambda_1^3H + 27\lambda_1^3\delta_1 - 44\lambda_1^2\lambda_2 - \frac{706}{9}\lambda_1^2H^2 - \frac{65}{2}\lambda_1^2H\delta_1 + \\ &
				+ 84\lambda_1^2\delta_1^2 - 32\lambda_1^2\delta_{1,1} - 38\lambda_1\lambda_2H + 92\lambda_1\lambda_3 - \frac{715}{9}\lambda_1H^3 - \\ & -
				\frac{1340}{9}\lambda_1H^2\delta_1 - 25\lambda_1H\delta_1^2  + 69\lambda_1\delta_1^3 - 130\lambda_1\delta_1\delta_{1,1} + 92\lambda_1\delta_{1,1,1} + \\ & +
				6\lambda_2H^2 - \frac{46}{3}H^4 - \frac{1205}{18}H^3\delta_1  - \frac{562}{9}H^2\delta_1^2 - \frac{101}{6}H\delta_1^3 -
				54\delta_1^2\delta_{1,1}
			\end{split}
		\end{equation*} 
		\item[] 
		\begin{equation*}
			\begin{split}
				k_h&= \frac{1}{8}\lambda_1^3H + \frac{1}{8}\lambda_1^2H^2 + \frac{1}{4}\lambda_1^2H\delta_1 - \frac{1}{2}\lambda_1\lambda_2H - \frac{1}{8}\lambda_1H^3 +
				\frac{7}{8}\lambda_1H\delta_1^2 + \\ & + \frac{3}{2}\lambda_1\delta_1\delta_2   \frac{1}{2}\lambda_2H^2 + \lambda_3H  - \frac{1}{8}H^4 - \frac{1}{4}H^3\delta_1 +
				\frac{1}{8}H^2\delta_1^2 + \frac{3}{4}H\delta_1^3 + \frac{3}{2}\delta_1^2\delta_2
			\end{split}
		\end{equation*}
			\item[]
		\begin{equation*}
			\begin{split}
				k_1(2)&=\frac{1}{4}\lambda_1^3\delta_1 + \frac{1}{2}\lambda_1^2H\delta_1 + \frac{5}{4}\lambda_1^2\delta_1^2 + \frac{1}{4}\lambda_1H^2\delta_1 + \frac{3}{2}\lambda_1H\delta_1^2 +
				\frac{7}{4}\lambda_1\delta_1^3 +\\ & + \lambda_1\delta_1\delta_2 - \lambda_1\delta_3 + \lambda_3\delta_1+ \frac{1}{4}H^2\delta_1^2 + H\delta_1^3 + \frac{3}{4}\delta_1^4 +
				\delta_1^2\delta_2
			\end{split}
		\end{equation*}
		\item[]
		\begin{equation*}
			\begin{split}
				k_{1,1}(3)&=2\lambda_1^3\delta_2 + 5\lambda_1^2\delta_1\delta_2 + \lambda_1\lambda_2\delta_2 + 4\lambda_1\delta_1^2\delta_2 + \lambda_2\delta_1\delta_2 + \lambda_2\delta_3 + \lambda_3\delta_2 +\\ & +
				\delta_1^3\delta_2
			\end{split}
		\end{equation*}
	\end{itemize}
\end{remark}

\begin{remark}
	Notice that the relation $[\cA_2]$ and $\delta_1^c$ gives us that $\lambda_2$ and $\delta_{1,1,1}$ can be obtained using the other generators. 
\end{remark}

Lastly, we compare our result with the one of Faber, namely Theorem 5.2 of \cite{Fab}. Recall that he described the Chow ring of $\Mbar_3$ with rational coefficients as the graded $\QQ$-algebra defined as a quotient of the graded polynomial algebra generated by $\lambda_1$, $\delta_1$, $\delta_0$ and $\kappa_2$. We refer to \cite{Mum} for a geometric description of these cycles. The quotient ideal is generated by $3$ relations in codimension $3$ and six relations in codimension $4$. 

First of all, if we invert $7$, we have that the relation $[\cA_3]$ implies that also $\lambda_3$ is not necessary as a generator. Therefore if we tensor with $\QQ$, our description can be simplified and we end up having exactly $4$ generators, namely $\lambda_1$, $\delta_1$, $\delta_{1,1}$ and $H$ and $9$ relations. Notice that the identity 
$$ [H]=9\lambda_1 - 3\delta_1 - \delta_0$$
allows us to easily pass from the generator $H$ to the generator $\delta_0$ that was used in \cite{Fab}. Finally, Table 2 in \cite{Fab} gives us the identity
$$ \delta_{1,1}=-5\lambda_1^2+\frac{\lambda_1\delta_0}{2} +\lambda_1\delta_1+\frac{\delta_1^2}{2}+\frac{\kappa_2}{2}$$ 
which explains how to pass from the generator $\delta_{1,1}$ to the generator $\kappa_2$ used in \cite{Fab}.

Thus we can construct two morphisms of $\QQ$-algebras:
$$ \phi: \QQ[\lambda_1,H,\delta_1,\delta_{1,1}]\longrightarrow \QQ[\lambda_1, \delta_0,\delta_1,\kappa_2] $$
and 
$$ \varphi: \QQ[\lambda_1,\delta_0,\delta_1,\kappa_2] \longrightarrow \QQ[\lambda_1,H,\delta_1,\delta_{1,1}]$$
which are  one the inverse of the other. A computation shows that $\phi$ sends our ideal of relations to the one in \cite{Fab} and $\varphi$ sends the ideal of relations in \cite{Fab} to the one we constructed. 

\appendix
\section{Discriminant relations}	

In this appendix, we generalize Proposition 4.2 of \cite{EdFul}. We do not need this result in its full generality in our work, only the formulas in \Cref{rem:gener}.

First of all, we set some notations. Everything is considered over a base field $\kappa$. Let $T$ be the $2$-dimensional split torus $\gm^2$ which embeds in $\GL_2$ as the diagonal matrices and let $E$ be the standard representation of $\GL_2$. Let $n$ be  positive integer. We denote by $\AA(n)$ the $n$-th symmetric power of the dual representation of $E$ and by $\PP^n$  the projective bundle $\PP(\AA(n))$. We denote by $\xi_n$ the hyperplane sections of $\PP^n$. Moreover, we denote by $h_i$ the element of $\ch_T(\PP^n)$ associated to the hyperplane defined by the equation $a_{i,n-i}=0$ for every $i=0,\dots,n$ where $a_{i,n-i}$ is the coordinate of $\PP^n$ associated to the coefficient of $x_0^ix_1^{n-i}$ and $x_0,x_1$ is a $T$-base for $E^{\vee}$. We have the identity
$$ h_i=\xi_n -(n-i)t_0- it_1$$ 
where $t_0,t_1$ are the generators of $\ch(\cB T)$ (acting respectively on $x_0$ and $x_1$). Let $\tau \in \ch(\cB T)$ be the element $t_0-t_1$, then the previous identity can be written as 
$$ h_i=h_0+i\tau.$$
Notice that we can reduce to the $T$-equivariant setting exactly as the authors do in \cite{EdFul}, because $\GL_2$ is a special group and therefore the morphism 
$$ \ch(\cB \GL_2) \longrightarrow \ch(\cB T)$$ 
is injective
 
Let $N$ and $k$ be two positive integers such that $k\leq N$. Inside $\PP^N$, we can define a closed subscheme $\Delta_k$ parametrizing (classes of) homogeneous forms in two variables $x_0,x_1$ which have a root of multiplicity at least $k$. 

We want to study the image of the pushforward of the closed immersion 
$$ \Delta_k \into  \PP^N;$$
we have the description of the Chow ring of $\PP^N$ as the quotient 
$$ \ch_{\GL_2}(\PP^N) \simeq  \ZZ[c_1,c_2,\xi_N]/(p_N(\xi_N))$$
where $p_N(\xi_N)$ is a monic polynomial in $h$ of degree $N+1$ with coefficients in $\ch(\cB\GL_2)\simeq \ZZ[c_1,c_2]$. The coefficient of $\xi_N^i$ is the $(N-i)$-th Chern class of the $\GL_2$-representation $\AA(N)$ for $i=0,\dots,N$. 

Exactly as it was done in \cite{Vis3} and generalized in \cite{EdFul2}, we introduce the multiplication morphism for every positive integer $r$ such that $r\leq N/m$
$$ \pi_r: \PP^r \times \PP^{N-kr} \longrightarrow \PP^N$$ 
defined by the association $(f,g)\mapsto f^kg$.
The $\GL_2$-action on the left hand side is again induced by the symmetric powers of the dual of $E$. Notice that we are not assuming that $N$ is a multiple of $k$. We have an analogue of Proposition 3.3 of \cite{Vis3} or Proposition 4.1 of \cite{EdFul2}.

\begin{proposition}
	Suppose that the characteristic of $\kappa$ is greater than $N$, then the disjoint union of the morphisms $\pi_r$ for $1\leq r\leq N/k$ is a Chow envelope for $\Delta_k\into \PP^N$. 
\end{proposition}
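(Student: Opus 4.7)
The approach is to reduce the statement to the fact that the component $\pi_1 : \PP^1 \times \PP^{N-k} \to \Delta_k$ is already a proper birational surjection; once this is established, the disjoint union containing $\pi_1$ trivially satisfies the Chow envelope property in the sense of Fulton, and the higher components $\pi_r$ for $r \geq 2$ are included because they will be useful in later computations of equivariant classes via pushforward formulas.

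The key step is therefore proving that $\pi_1$ is birational onto $\Delta_k$. I would exhibit an open $U_1 \subset \PP^1 \times \PP^{N-k}$ consisting of pairs $(f, g)$ with $\gcd(f, g) = 1$ and such that $g$ has no root of multiplicity $\geq k$, and an open $\Delta_k^\circ \subset \Delta_k$ consisting of forms $F$ with a unique root of multiplicity exactly $k$. For $F \in \Delta_k^\circ$, one recovers the pair $(f, g)$ by taking $f$ to be the unique linear factor of $F$ whose root has multiplicity $k$, and then $g = F / f^k$. This gives an algebraic inverse to $\pi_1|_{U_1}$ defined on $\Delta_k^\circ$, so $\pi_1|_{U_1}$ is an isomorphism onto $\Delta_k^\circ$. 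The hypothesis $\operatorname{char} \kappa > N$ enters precisely here: it ensures that the multiplicity of a root of a form in $\AA(N)$ is correctly computed by the vanishing of successive derivatives, so that both $U_1$ and $\Delta_k^\circ$ are open subschemes cut out by classical determinantal conditions, and so that the inverse assignment $F \mapsto (f, g)$ is genuinely a morphism.

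Granted the birationality of $\pi_1$ onto $\Delta_k$, the Chow envelope property follows by the standard argument: for any subvariety $V \subset \Delta_k$, the strict transform $\widetilde V := \overline{\pi_1^{-1}(V \cap \Delta_k^\circ)} \subset \PP^1 \times \PP^{N-k}$ is a subvariety of the source mapping birationally onto $V$ through $\pi_1$, using that $\pi_1$ is proper since both source and target are proper over $B\GL_2$. The main obstacle is verifying that the inverse map $\Delta_k^\circ \to U_1$ is algebraic and not merely set-theoretic on closed points; this is exactly where the characteristic assumption is used essentially, as one needs the classical root-multiplicity formalism to produce the required scheme-theoretic inverse over an arbitrary base. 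The other components $\pi_r$ contribute no further birational information beyond the image of $\pi_1$, but analogous birationality arguments at generic points of each stratum $\overline{\Delta_k^{[\geq r]}}$ — where $\mu_k(F) := \sum \lfloor m_i/k \rfloor$ equals $r$ at the generic point — can be given when needed.
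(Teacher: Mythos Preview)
Your central claim---that $\pi_1$ alone is already a Chow envelope for $\Delta_k$, so that the higher $\pi_r$ are redundant---is false, and the gap is precisely in the strict transform step. You write that for any subvariety $V\subset\Delta_k$ the closure $\overline{\pi_1^{-1}(V\cap\Delta_k^\circ)}$ maps birationally onto $V$; but this fails whenever $V\cap\Delta_k^\circ=\emptyset$, i.e.\ whenever $V$ lies entirely in the locus of forms with more than one root of multiplicity $\geq k$ (or a root of multiplicity $\geq 2k$). A proper birational morphism is \emph{not} automatically a Chow envelope.

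Here is a concrete counterexample. Take $k=2$, $N=4$, and let $V\subset\Delta_2\subset\PP^4$ be the image of the second Veronese $\PP^2\to\PP^4$, $h\mapsto h^2$. Then $\pi_1^{-1}(V)=\{(f,l^2):f,l\in\PP^1\}\cong\PP^1\times\PP^1$, and the restricted map $\pi_1^{-1}(V)\to V\cong\PP^2$ sends $(f,l)\mapsto fl$; this is the quotient by the involution swapping the two factors, hence is generically $2$--to--$1$ with irreducible source. No subvariety of $\PP^1\times\PP^{2}$ maps birationally onto $V$ via $\pi_1$, so $\pi_{1,*}[\pi_1^{-1}(V)]=2[V]$ and one cannot hit $[V]$ integrally. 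It is exactly $\pi_2:\PP^2\times\PP^0\to\PP^4$, $h\mapsto h^2$, that maps birationally onto $V$.

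The stratification by $\mu_k(F)=\sum_i\lfloor m_i/k\rfloor$ that you relegate to a parenthetical remark is in fact the whole proof: one shows that $\pi_r$ is birational onto the closure of the locus $\{\mu_k=r\}$, and these strata exhaust $\Delta_k$. This is the argument in the references \cite{Vis3} and \cite{EdFul2} that the paper invokes, and it cannot be shortcut to $r=1$.
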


Therefore it is enough to study the image of the pushforward of $\pi_r$ for $r\leq N/k$.  We have that $\pi^*(\xi_N)=k\xi_r+\xi_{N-kr}$, therefore for a fixed $r$ we have that the image of $\pi_{r,*}$ is generated as an ideal by $\pi_{r,*}(\xi_r^m)$ for $0\leq m\leq r$.

\begin{remark}
	Fix $r\leq N/k$. We have that 
	$$ \pi_{r,*}(\xi_r^m) \in \big(\pi_{r,*}(1), \pi_{r,*}(h_0), \dots, \pi_{r,*}(h_0\dots h_{m-1})\big)$$ 
	in $\ch_T(\PP^r)$ for $m\leq r$. In fact, we have 
	$$ h_0\dots h_{m-1}= \prod_{i=0}^{m-1} (\xi_r-(r-i)t_0-it_1) = \xi_r^m + \sum_{i=0}^{m-1}\alpha_i\xi_r^i $$
	with $\alpha_i \in \ch(\cB T)$. Therefore we can prove it by induction on $m$. 
\end{remark}

Therefore, it is enough to describe the ideal generated by $\pi_{r,*}(h_0\dots h_m)$ for $1\leq r\leq N/k$ and $-1\leq m \leq r-1$. We define the element associated to $m=-1$ as $\pi_{r,*}(1)$. 

Our goal is to prove that the ideal is in fact generated by $\pi_{1,*}(1)$ and $\pi_{1}(h_0)$. To do so, we have to introduce some morphisms first. 

Let $n$ be an integer and $\rho_n:(\PP^1)^{\times n} \rightarrow \PP^n$ by the $n$-fold product morphism, which is an $S_n$-torsor, where $S_n$ is the $n$-th symmetric group. Furthermore, we denote by $\Delta^n:\PP^1 \into (\PP^1)^{\times n}$ the small diagonal in the $n$-fold product, i.e. the morphism defined by the association $f \mapsto (f,f,\dots,f)$. We denote by $h_i$  the fundamental class of $[\infty]:=[0:1]$ in the Chow ring of the $i$-th element of the product $(\PP^1)^n$ (and by pullback in the Chow ring of the product).
\begin{remark}
	Notice that we are using the same notation for two different elements: if we are in the projective space $\PP^n$, $h_i$ is the hyperplane defined by the vanishing of the $i+1$-th coordinate of $\PP^n$. On the contrary, if we are in the Chow ring of the product $(\PP^1)^n$, it represents the subvariety defined as the pullback through the $i$-th projection of the closed immersion $\infty \into \PP^1$. Notice that $\rho_{n,*}(h_0\dots h_s)$ is equal  to $s! (n-s)! h_0\dots h_s$ for every $s\leq n$.
\end{remark}
We have a commutative diagram of finite morphisms 
$$
\begin{tikzcd}
	(\PP^1)^r \times (\PP^1)^{N-kr} \arrow[r, "\alpha_r^k"] \arrow[d, "\rho_r \times \rho_{N-kr}"'] & (\PP^1)^N \arrow[d, "\rho_N"] \\
	\PP^r \times \PP^{N-kr} \arrow[r, "\pi_r"]                                                    & \PP^N                        
\end{tikzcd}
$$  
where $\alpha_r^k= (\Delta^k)^{\times r} \times \id_{(\PP^1)^{N-kr}}$.  We can use this diagram to have a concrete description of $\pi_{r,*}(h_0\dots h_m)$. In order to do so, we first need the following lemma to describe the fundamental  class of the image of $\alpha_r^k$. 

\begin{lemma}\label{lem:k-diag}
We have the following identity 
	$$ [\Delta^k]= \sum_{j=0}^{k-1} \tau^{k-1-j}\sigma_j^k(h_1,\dots,h_k)$$ 
	in the Chow ring of $(\PP^1)^{\times k}$ for every $k \geq 2$, where $\sigma_j^k(-)$ is the elementary symmetric function with $k$ variables of degree $j$. 
\end{lemma}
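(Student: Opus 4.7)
The plan is to verify this identity of classes in the $T$-equivariant Chow ring $\ch_T((\PP^1)^{\times k})$ via localization at the torus fixed points. Since $(\PP^1)^{\times k}$ is smooth and projective with isolated $T$-fixed locus, $\ch_T((\PP^1)^{\times k})$ is a free $\ch(\cB T)$-module and its restriction map to the fixed locus becomes injective after inverting $\tau = t_0 - t_1$; freeness then forces the two integral classes to be equal as soon as their restrictions agree at every $T$-fixed point.

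First I would enumerate the $2^k$ torus-fixed points $p_\epsilon$ of $(\PP^1)^{\times k}$, indexed by $\epsilon = (\epsilon_1, \dots, \epsilon_k) \in \{0, \infty\}^k$. Since $h_i$ is the pullback of $[\infty]$ from the $i$-th factor, its restriction to $p_\epsilon$ is the Euler class of $T_\infty \PP^1$ when $\epsilon_i = \infty$ and is $0$ when $\epsilon_i = 0$, giving $h_i|_{p_\epsilon} \in \{-\tau, 0\}$ depending on $\epsilon_i$. The small diagonal $\Delta^k$ passes only through the two constant fixed points, and its normal bundle at $p_{(\epsilon_0, \dots, \epsilon_0)}$ is isomorphic to $(T_{\epsilon_0}\PP^1)^{\oplus (k-1)}$, so $[\Delta^k]|_{p_{(0,\dots,0)}} = \tau^{k-1}$, $[\Delta^k]|_{p_{(\infty,\dots,\infty)}} = (-\tau)^{k-1}$, and $[\Delta^k]|_{p_\epsilon} = 0$ at every non-constant $p_\epsilon$.

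Next I would compute the right-hand side. Writing $m = \#\{i : \epsilon_i = \infty\}$, one finds $\sigma_j^k(h_1, \dots, h_k)|_{p_\epsilon} = \binom{m}{j}(-\tau)^j$, so the restriction of the right-hand side equals
\begin{equation*}
\tau^{k-1}\sum_{j=0}^{\min(m,k-1)}\binom{m}{j}(-1)^j.
\end{equation*}
For $m=0$ this is $\tau^{k-1}$; for $m=k$, only the top term of the otherwise-vanishing alternating binomial sum is dropped, yielding $-\binom{k}{k}(-1)^k\tau^{k-1} = (-\tau)^{k-1}$; and for $1\leq m\leq k-1$ the sum is the full alternating sum $(1-1)^m=0$. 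These three cases exactly match the restrictions of $[\Delta^k]$ computed above.

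The only subtle point is the integrality of the localization argument, since the classical equivariant localization theorem produces injectivity only after inverting $\tau$. This is remedied by the freeness of $\ch_T((\PP^1)^{\times k})$ over $\ch(\cB T)$, which embeds it into its $\tau$-localization and lets the conclusion descend to integral coefficients. An alternative combinatorial route would be to write $\Delta^k$ as the transverse intersection of the divisors $D_{i,i+1} = \{p_i = p_{i+1}\}$ with classes $h_i + h_{i+1} + \tau$ (the case $k=2$ of the lemma), and then prove $\prod_{i=1}^{k-1}(h_i + h_{i+1} + \tau) = \sum_{j=0}^{k-1}\tau^{k-1-j}\sigma_j^k(h_1,\dots,h_k)$ by induction using $h_i^2 = -\tau h_i$; this is more elementary but computationally heavier.
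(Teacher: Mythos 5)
Your proof is correct, but it takes a genuinely different route from the paper's. The paper writes $\Delta^k$ as the complete intersection of the $k-1$ divisors $\{p_i=p_{i+1}\}$, so that $[\Delta^k]=\prod_{i=1}^{k-1}(h_i+h_{i+1}+\tau)$, and then proves the symmetric-function identity by induction on $k$ using the relations $h_i^2+\tau h_i=0$ together with the recursion $\sigma_j^k=x_k\sigma_{j-1}^{k-1}+\sigma_j^{k-1}$, with the base case $k=2$ quoted from Vistoli --- this is exactly the ``alternative combinatorial route'' you sketch in your last sentence. Your main argument instead localizes at the $2^k$ torus-fixed points: both sides restrict to $\tau^{k-1}$, $(-\tau)^{k-1}$, and $0$ at the two constant and at the non-constant fixed points respectively, and the whole identity collapses to the alternating binomial sum $\sum_j(-1)^j\binom{m}{j}$. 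Your fixed-point values are consistent with the paper's conventions, in which $h_i^2=-\tau h_i$ forces $h_i\vert_{\infty_i}=-\tau$ and $e(T_0\PP^1)=\tau$; and you handle the integrality point correctly, since $\ch_T((\PP^1)^{\times k})$ is free over $\ch(\cB T)$ with basis the square-free monomials in the $h_i$, hence embeds in its $\tau$-localization (in fact the restriction to the fixed locus is already injective integrally, as one checks factor by factor via the K\"unneth decomposition). The trade-off: the paper's computation stays entirely inside the explicit presentation of the equivariant Chow ring and needs no localization input, but pays for it with the inductive symmetric-function manipulation; your version makes the vanishing at non-constant fixed points geometrically transparent and replaces the induction by a one-line check at each fixed point, at the cost of invoking freeness and the fixed-point restriction. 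Both are complete proofs.
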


\begin{proof}
The diagonal $\Delta^k$ is equal to the complete intersection of the hypersurfaces of $(\PP^1)^k$
of equations $x_{0,i}x_{1,i+1}-x_{0,i+1}x_{1,i}$ for $1\leq i \leq k-1$ (we are denoting  by $x_{0,i},x_{i,1}$ the two coordinates of the $i$-th factor of the product). Therefore we have 
$$ \Delta^k=\prod_{i=1}^{k-1}(h_i+h_{i+1}+\tau). $$
Notice that in the Chow ring of $(\PP^1)^k$ we have $k$ relations of degree $2$ which can be written as $h_i^2+\tau h_i=0$ for every $i=1,\dots,k$. 

The case $k=2$ was already proven in Lemma 3.8 of \cite{Vis3}. We proceed by induction on $k$. We have
$$\Delta^{k+1}=\prod_{i=1}^{k}(h_i+h_{i+1}+\tau)=(h_{k+1}+h_k+\tau) \Delta^k$$ 
and thus by induction 
$$\Delta^{k+1}=\sum_{i=0}^{k-1}h_{k+1} \tau^{k-1-i}\sigma_i^k + \sum_{i=0}^{k-1}  \tau^{k-i}\sigma_i^k  +\sum_{i=0}^{k-1} h_k \tau^{k-1-i}\sigma_i^k.$$ 
Recall that we have the relations $$\sigma_j^k(x_1,\dots,x_k)=x_k\sigma_{j-1}^{k-1}(x_1,\dots,x_{k-1}) + \sigma_j^{k-1}(x_1,\dots,x_{k-1})$$ between elementary symmetric functions (with $\sigma_j^k=0$ for $j>k$), therefore we have
$$ \sum_{i=0}^{k-1} \tau^{k-1-i}  h_k\sigma_i^k = \sum_{i=0}^{k-1} \tau^{k-1-i}(h_k^2\sigma_{i-1}^{k-1}+h_k\sigma_{i}^{k-1})=\sum_{i=0}^{k-1} \tau^{k-1-i}h_k(-\tau \sigma_{i-1}^{k-1} +\sigma_i^{k-1})$$ 
where we used the relation $h_k^2+\tau h_k=0$ in the last equalities. Therefore we get
\begin{equation*}
	\begin{split}
		\Delta^{k+1}&=\sum_{i=0}^{k-1} \tau^{k-1-i}(h_{k+1}\sigma_i^k+h_k\sigma_i^{k-1}) + \sum_{i=0}^{k-1} \tau^{k-i}(\sigma_i^k-h_k\sigma_{i-1}^{k-1})= \\ & =\sum_{i=0}^{k-1}\tau^{k-1-i}(h_{k+1}\sigma_i^k+h_k\sigma_i^{k-1}) + \sum_{i=0}^{k-1} \tau^{k-i}\sigma_i^{k-1}
	\end{split} 
\end{equation*}
Shifting the index of the last sum, it is easy to get the following identity
$$ \Delta^{k+1}= h_{k+1}\sigma_{k-1}^k+h_k\sigma_{k-1}^{k-1} + \tau^k +\sum_{i=0}^{k-2}\tau^{k-1-i}(h_{k+1}\sigma_i^k+h_k\sigma_i^{k-1}+\sigma_{i+1}^{k-1})$$
and the statement follows from shifting the last sum again and from using the relations between the symmetric functions (notice that $h_k\sigma_{k-1}^{k-1}=\sigma_k^k$).
\end{proof}

\begin{remark}
	We define by $\theta_{m,r}$ the $T$-equivariant closed subvariety of $(\PP^1)^r \times (\PP^1)^{N-kr}$ of the form 
	$$\theta_{m,r}:=\infty^{m+1} \times (\PP^1)^{r-(m+1)} \times (\PP^1)^{N-kr}$$
	induced by the $T$-equivariant closed immersion $\infty \into \PP^1$.
	We  have that $$(\rho_r \times \rho_{N-kr})_*(\theta_{m.r})=(r-(m+1))! (N-kr)! h_0\dots h_m$$ for every $m\leq r-1$. 
\end{remark}

From now on, we set $d:=r-(m+1)\geq 0$. Thanks to the remark and the commutativity of the diagram we constructed, we can computate the pushforward $\rho_{N,*}\alpha_{r,*}^k(\theta_{m,r})$ and then divide it by $d!(N-kr)!$ to get $\pi_{r,*}(h_0\dots h_m)$.

We denote by $\alpha_l^{(k,d)}$ the integer
$$ \alpha_{(k,d)}:=\sum_{j_1+\dots+j_d=l}^{0\leq j_s \leq k-1}\binom{k}{j1}\dots \binom{k}{j_d}$$
and by $\beta_l^{(k,m,r)}$ the integer (because $l\leq d(k-1)$)
$$ \beta_l^{(k,m,r)}:=\frac{(N-(m+1)k-l)!}{(N-kr)! d!}.$$

\begin{lemma}\label{lem:pi-r-1}
	We get the following equality 
	$$ \pi_{r,*}(h_0\dots h_m) = \sum_{l=0}^{d(k-1)} \alpha_l^{(k,d)}\beta_{l}^{(k,m,r)} \tau^{d(k-1)-l} h_0\dots h_{(m+1)k+l-1}$$
	in the $T$-equivariant Chow ring of $\PP^N$.
\end{lemma}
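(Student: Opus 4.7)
The plan is to exploit the commutative diagram
\[
\rho_N \circ \alpha_r^k = \pi_r \circ (\rho_r \times \rho_{N-kr})
\]
by applying the induced pushforwards to $[\theta_{m,r}]$. Since the preceding remark gives $(\rho_r\times\rho_{N-kr})_*[\theta_{m,r}] = d!\,(N-kr)!\,h_0\cdots h_m$, one obtains
\[
\pi_{r,*}(h_0\cdots h_m) = \frac{1}{d!\,(N-kr)!}\,\rho_{N,*}\alpha_{r,*}^k[\theta_{m,r}],
\]
and the problem reduces to computing $\rho_{N,*}\alpha_{r,*}^k[\theta_{m,r}]$.

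First I would compute $\alpha_{r,*}^k[\theta_{m,r}]$ factor by factor. The morphism $\alpha_r^k=(\Delta^k)^{\times r}\times\id$ splits as a product, so the pushforward does too: on each of the first $m+1$ factors of the source (where $\theta_{m,r}$ is $[\infty]$), $(\Delta^k)_*[\infty]$ is the class of the point $(\infty,\dots,\infty)$ in the associated block of $k$ coordinates of $(\PP^1)^N$, equal to the product of the $k$ hyperplane classes of that block; on each of the next $d$ factors (where $\theta_{m,r}$ is all of $\PP^1$), $(\Delta^k)_*[\PP^1]=[\Delta^k]$ in its block; on the final $N-kr$ factors the identity gives $1$. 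This yields
\[
\alpha_{r,*}^k[\theta_{m,r}] = h_0\cdots h_{(m+1)k-1}\cdot\prod_{s=1}^d [\Delta^k]_{\text{block }s}.
\]
Substituting \Cref{lem:k-diag} and collecting terms by the total power of $\tau$:
\[
\prod_{s=1}^d [\Delta^k]_{\text{block }s} = \sum_{l=0}^{d(k-1)} \tau^{d(k-1)-l}\sum_{\substack{j_1+\cdots+j_d=l\\0\le j_s\le k-1}} \prod_{s=1}^d \sigma_{j_s}^{k}(h_{\text{block }s}).
\]
Since $\sigma_{j_s}^k$ unfolds into $\binom{k}{j_s}$ squarefree monomials, multiplying by $h_0\cdots h_{(m+1)k-1}$ produces, for each fixed $l$, a sum of squarefree monomials in $(m+1)k+l$ distinct $h_i$'s, with total multiplicity precisely $\alpha_l^{(k,d)}$.

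Finally I would push forward under $\rho_N$. The key (geometrically elementary) input is
\[
\rho_{N,*}(h_{i_1}\cdots h_{i_q}) = (N-q)!\,h_0 h_1\cdots h_{q-1}
\]
for any $q$ distinct indices $i_1,\dots,i_q$, since the subscheme imposing $\infty$ at those $q$ positions maps to the codimension-$q$ coordinate subspace of $\PP^N$ with generic fiber the $(N-q)!$ orderings of the free roots among the remaining positions. Applying this with $q=(m+1)k+l$ and collecting the $\alpha_l^{(k,d)}$ identical contributions for each $l$ gives
\[
\rho_{N,*}\alpha_{r,*}^k[\theta_{m,r}] = \sum_{l=0}^{d(k-1)} \alpha_l^{(k,d)}\,\bigl(N-(m+1)k-l\bigr)!\,\tau^{d(k-1)-l}\,h_0\cdots h_{(m+1)k+l-1}.
\]
Dividing by $d!\,(N-kr)!$ and recognising $\beta_l^{(k,m,r)}=(N-(m+1)k-l)!/\bigl(d!\,(N-kr)!\bigr)$ then yields the stated formula. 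The main bookkeeping hurdle is showing that $\sum_{|j|=l}\prod_s\binom{k}{j_s}=\alpha_l^{(k,d)}$, which is immediate from the standard identity $\sigma_j^k=\sum_{|I|=j}\prod_{i\in I}x_i$; there is no genuine geometric obstacle beyond \Cref{lem:k-diag} and the elementary pushforward formula above.
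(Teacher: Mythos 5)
Your proof is correct and follows essentially the same route as the paper: push $[\theta_{m,r}]$ around the commutative square, express $\alpha_{r,*}^k[\theta_{m,r}]$ as $h_0\cdots h_{(m+1)k-1}$ times a product of the diagonal classes from \Cref{lem:k-diag}, and then apply the $S_N$-torsor pushforward formula $\rho_{N,*}(h_{i_1}\cdots h_{i_q})=(N-q)!\,h_0\cdots h_{q-1}$ before dividing by $d!\,(N-kr)!$. The only difference is that you spell out the block-by-block splitting of $\alpha_{r,*}^k$ and the geometric justification of the pushforward formula, which the paper leaves as a "simple computation."
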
 

\begin{proof}
	Thanks to \Cref{lem:k-diag}, we have that 
	\begin{equation*}
		\begin{split}
			&\alpha_{r,*}^k(\theta_{m,r})=[\infty^{k(m+1)} \times (\Delta^k)^d \times (\PP^1)^{N-kr}]= \\ & =h_1\dots h_{(m+1)k} \prod_{i=1}^d \Big(\sum_{j=0}^{k-1} \tau^{k-1-j}\sigma_j^{k}(h_{(m+j)k+1},h_{(m+j)k+2},\dots,h_{(m+j+1)k})\Big);
		\end{split}
	\end{equation*}
	we need to take the image through $\rho_{N,*}$ of this element. However, we have that $\rho_{N,*}(h_{i_1} \dots h_{i_s})=\rho_{N,*}(h_1\dots h_s)$ for every $s$-uple of $(i_1,\dots,i_s)$ of distinct indexes because $\rho_N$ is a $S_N$-torsor. Therefore a simple computation shows that $\rho_{N,*}\alpha_{r,*}^k(\theta_{m,r})$ has the following form:
	$$\sum_{l=0}^{d(k-1)}\Big(\sum_{j_1+\dots+j_d=l}^{0\leq j_s \leq k-1}\binom{k}{j_1}\dots \binom{k}{j_d} \Big)(N-(m+1)k-l)!  \tau^{d(k-1)-l} h_0 \dots   h_{(m+1)k+l-1}.$$
	The statement follows.
\end{proof}

\begin{remark}\label{rem:gener}
	Notice that the expression makes sense also per $m=-1$ and in fact we get a description of the $\pi_{r,*}(1)$. 
	
	Let us describe the case $r=1$. Clearly we only have $d=1$ (or $m=-1$) and $d=0$ (or $m=0$). If $d=0$, the formula gives us 
	$$\pi_{1,*}(h_0)=h_0\dots h_{k-1} \in \ch_T(\PP^N);$$
	if $d=1$ a simple computation shows 
	$$ \pi_{1,*}(1)=\sum_{l=0}^{k-1}(k-l)!\binom{k}{l}\binom{N-k}{N-l}\tau^{k-1-l}h_0\dots h_{l-1} \in \ch_T(\PP^N).$$
	These two formula gives us the $T$-equivariant class of these two elements. As matter of fact, $\pi_{1,*}(1)$ is also a $\GL_2$-equivariant class by definition. As far as $\pi_{1,*}(h_0)$ is concerned, this is clearly not $\GL_2$-equivariant. Nevertheless we can consider
	 $$\pi_ {1,*}(\xi_1)=\pi_{1,*}(h_0)+t_1\pi_{1,*}(1)$$
	which is a $\GL_2$ -equivariant class.
	
	We can describe $\pi_{1,*}(1)$ geometrically. In fact, it is the fundamental class of the locus describing forms $f$ such that $f$ has a root with multiplicity at least $k$. This locus is strictly related to locus of $A_k$-singularities in the moduli stack of cyclic covers of the projective line of degree $2$.
\end{remark}

We denote by $I$ the ideal generated by the two elements described in the previous remark. First of all, we prove that almost all the pushforwards we need to compute are in this ideal. 

\begin{proposition}
	We have that
	$$ \pi_{r,*}(h_0\dots h_m) \in I $$
	for every $1\leq r \leq N/k$ and $0\leq m \leq r-1$. 
\end{proposition}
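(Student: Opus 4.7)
The plan is to read off the claim directly from the explicit formula in \Cref{lem:pi-r-1}, combined with the geometric identification $\pi_{1,*}(h_0)=h_0\cdots h_{k-1}$ from \Cref{rem:gener}.

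First I would invoke \Cref{lem:pi-r-1} to write
$$\pi_{r,*}(h_0\dots h_m) = \sum_{l=0}^{d(k-1)} \alpha_l^{(k,d)}\beta_{l}^{(k,m,r)} \tau^{d(k-1)-l}\, h_0\dots h_{(m+1)k+l-1},$$
with $d=r-(m+1)$. The key structural observation is that the hypothesis $m\geq 0$ forces every monomial appearing on the right to contain the block $h_0 h_1\cdots h_{k-1}$ as a factor: indeed for $m\geq 0$ and $l\geq 0$ one has $(m+1)k+l-1\geq k-1$, so each product $h_0\cdots h_{(m+1)k+l-1}$ has at least $k$ consecutive factors starting at $h_0$, and therefore factors as
$$h_0\cdots h_{(m+1)k+l-1} = (h_0\cdots h_{k-1})\cdot(h_k\cdots h_{(m+1)k+l-1}).$$

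Then I would recall from \Cref{rem:gener} that $\pi_{1,*}(h_0)=h_0\cdots h_{k-1}$, which is one of the two generators of the ideal $I$. Collecting the common factor, we obtain
$$\pi_{r,*}(h_0\dots h_m)=\pi_{1,*}(h_0)\cdot\sum_{l=0}^{d(k-1)} \alpha_l^{(k,d)}\beta_{l}^{(k,m,r)} \tau^{d(k-1)-l}\, h_k\cdots h_{(m+1)k+l-1},$$
so the entire class lies in the principal ideal $(\pi_{1,*}(h_0))\subseteq I$.

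There is no real obstacle here: all the combinatorial content (the coefficients $\alpha_l^{(k,d)}$ and $\beta_l^{(k,m,r)}$, the $\tau$-powers, and any simplification modulo the relations $h_i^2+\tau h_i=0$) is irrelevant for membership in $I$, since every summand is already manifestly divisible by the generator $\pi_{1,*}(h_0)$. Note that this argument uses $m\geq 0$ in an essential way; the remaining case $m=-1$ (that is, $\pi_{r,*}(1)$) is not covered by this proposition and would require using both generators of $I$ together with the explicit coefficients.
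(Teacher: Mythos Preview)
Your proposal is correct and follows essentially the same approach as the paper's own proof: both invoke \Cref{lem:pi-r-1}, observe that $m\geq 0$ gives $(m+1)k+l-1\geq k-1$, and conclude that every summand is divisible by $h_0\cdots h_{k-1}=\pi_{1,*}(h_0)\in I$. Your write-up is simply a more explicit version of the same argument.
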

\begin{proof}
	\Cref{lem:pi-r-1} implies that it is enough to prove that $(m+1)k+l-1\geq k-1$ for every $l=0,\dots, d(k-1)$ where $d=r-(m+1)$, because it implies that every factor of $\pi_{r,*}(h_0\dots h_m)$ is divisible by $h_0\dots h_{k-1}$. This follows from $m \geq 0$. 
\end{proof}

Therefore it only remains to prove  that $\pi_{r,*}(1)$ is in the ideal $I$ for $r\geq 2$. To do so, we need to prove some preliminary results.

\begin{proposition}\label{prop:square-power}
	We have the following equality 
	$$ h_0^2 \dots h_{n-1}^2 h_n \dots h_{m-1} = \sum_{s=0}^n (-1)^s s! \binom{n}{s}\binom{m}{s}\tau^s h_0\dots h_{m+n-s-1}$$ 
	in the $T$-equivariant Chow ring of $\PP^N$ for every $n\leq m$.
\end{proposition}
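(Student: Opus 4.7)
The plan is to prove the identity by induction on $n$, with the base case $n=0$ being immediate since both sides collapse to $h_0 \cdots h_{m-1}$. The whole identity lives in $\ZZ[h_0,\tau]$ after using $h_i = h_0 + i\tau$, so it is a purely formal polynomial identity and no use of relations like $p_N(\xi_N) = 0$ is needed.

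For the inductive step, assume the statement for $n-1$ (with $n-1 \leq m$) and multiply both sides by $h_{n-1}$. The left-hand side becomes exactly the desired $h_0^2\cdots h_{n-1}^2 h_n \cdots h_{m-1}$, while the right-hand side becomes
$$\sum_{s=0}^{n-1}(-1)^s s!\binom{n-1}{s}\binom{m}{s}\tau^s\, h_{n-1} h_0 \cdots h_{m+n-s-2}.$$
The key trick is to observe that, since $h_j = h_0 + j\tau$ for every integer $j$, one has
$$h_{n-1} = h_{m+n-s-1} - (m-s)\tau,$$
so that
$$h_{n-1} h_0 \cdots h_{m+n-s-2} = h_0 \cdots h_{m+n-s-1} - (m-s)\tau \, h_0 \cdots h_{m+n-s-2}.$$

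Substituting this back produces two sums. The first already has the desired shape, with coefficient $(-1)^s s! \binom{n-1}{s}\binom{m}{s}$ in front of $\tau^s h_0 \cdots h_{m+n-s-1}$, running over $0 \le s \le n-1$. For the second sum, reindex by $s \mapsto s+1$ so that it also multiplies $\tau^s h_0 \cdots h_{m+n-s-1}$, now for $1 \le s \le n$. The two elementary identities
$$(s-1)!(m-s+1)\binom{m}{s-1} = s!\binom{m}{s}, \qquad \binom{n-1}{s} + \binom{n-1}{s-1} = \binom{n}{s},$$
(the second being Pascal's rule) then combine the two sums into $\sum_{s=0}^{n}(-1)^s s!\binom{n}{s}\binom{m}{s}\tau^s h_0\cdots h_{m+n-s-1}$, which is the RHS for $n$. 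The boundary terms $s=0$ and $s=n$ are handled automatically since $\binom{n-1}{-1}=\binom{n-1}{n}=0$.

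There is no serious obstacle: the argument is bookkeeping once the substitution $h_{n-1} = h_{m+n-s-1} - (m-s)\tau$ is spotted. The only thing to be careful with is the edge case $s=n-1$ in the second sum (after reindexing, $s=n$), where the inner product $h_0 \cdots h_{m+n-s-2}$ can be as short as $h_0 \cdots h_{m-1}$ (never empty since $m \geq n \geq 1$ in the nontrivial case), and with the ranges of summation that Pascal's identity patches together.
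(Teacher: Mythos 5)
Your proof is correct and follows essentially the same route as the paper's: the paper's recursion $a_{n,m}=a_{n-1,m+1}-(m-n+1)\tau\, a_{n-1,m}$ is exactly your identity $a_{n,m}=h_{n-1}a_{n-1,m}$ rewritten via $h_{n-1}=h_m-(m-n+1)\tau$, so both arguments amount to the same induction on $n$ using the affine relations $h_i=h_0+i\tau$. The only difference is that you carry out explicitly the combinatorial verification (the split $h_{n-1}=h_{m+n-s-1}-(m-s)\tau$, the reindexing, and Pascal's rule) that the paper leaves as a ``straightforward computation''.
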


\begin{proof}
	Denote by $a_{n,m}$ the left term of the equality. Because we have the identity $h_i=h_j+(j-i)\tau$ in the $T$-equivariant Chow ring of $\PP^N$, we have the following formula
	$$ a_{n,m}=a_{n-1,m+1}-(m-n+1)\tau a_{n-1,m}$$ 
	which gives us that $a_{n,m}$ is uniquely determined from the elements $a_{0,j}$ for $j\leq N$. This implies that it is enough to prove that the formula in the statement verifies the recursive formula above. This follows from straightforward computation.
\end{proof}

Before going forward with our computation, we recall the following combinatorial fact.

\begin{lemma}\label{lem:comb}
	For every pair of non-negative integers $k,m\leq N$ we have that
	$$\sum_{l=0}^{k-1} (-1)^l \binom{m}{l}\binom{N-l}{k-1-l}=\binom{N-m}{k-1}.$$ 
\end{lemma}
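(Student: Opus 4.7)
My plan is to interpret both sides as coefficients of $x^{k-1}$ in explicit generating functions and to derive the identity from the factorization $(1+x)^{N-m}=(1+x)^N\cdot (1+x)^{-m}$ by extending the summation range so that the binomial theorem applies directly.

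First I would rewrite $\binom{N-l}{k-1-l}=[x^{k-1-l}](1+x)^{N-l}=[x^{k-1}]\,x^l(1+x)^{N-l}$, which turns the left-hand side into
$$\sum_{l=0}^{k-1}(-1)^l\binom{m}{l}\binom{N-l}{k-1-l}=[x^{k-1}]\,(1+x)^N\sum_{l=0}^{k-1}\binom{m}{l}\left(\frac{-x}{1+x}\right)^l.$$
Next I would enlarge the summation range from $l\le k-1$ to $l\le m$: for indices with $k-1<l\le m$ the factor $\binom{N-l}{k-1-l}$ vanishes, since $k-1-l<0$ while $N-l\ge 0$. (If instead $m<k-1$ then $\binom{m}{l}=0$ already forces the extra terms to be zero.) Once the range is enlarged, the binomial theorem gives
$$\sum_{l=0}^{m}\binom{m}{l}\left(\frac{-x}{1+x}\right)^l=\left(1-\frac{x}{1+x}\right)^m=(1+x)^{-m},$$
so that the left-hand side equals $[x^{k-1}](1+x)^{N-m}=\binom{N-m}{k-1}$, as desired.

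The only delicate point — and hence the main, rather minor, obstacle — is justifying the extension of the summation range; this relies on the standing hypothesis $k,m\le N$, which guarantees that $N-l\ge 0$ whenever $\binom{m}{l}\ne 0$, so that the vanishing convention $\binom{n}{r}=0$ (for $n\ge 0$ and $r<0$) may be applied without ambiguity. An alternative combinatorial argument is available: to count $(k-1)$-element subsets of an $N$-set that are disjoint from a fixed $m$-subset, one applies inclusion--exclusion on the size $l$ of the intersection with the fixed subset, producing exactly the left-hand side, while the right-hand side is the direct count.
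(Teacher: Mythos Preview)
Your proof is correct. The paper itself does not prove this lemma; it is introduced with the phrase ``we recall the following combinatorial fact'' and left without argument, so there is nothing to compare against. Your generating-function approach is clean, and the step where you extend (or truncate) the summation range to $0\le l\le m$ is justified exactly as you say: either $\binom{m}{l}$ or $\binom{N-l}{k-1-l}$ vanishes on the added terms, using $m\le N$ to ensure $N-l\ge 0$ so that the convention $\binom{n}{r}=0$ for $n\ge 0$, $r<0$ applies unambiguously.

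One small simplification: rather than passing through the power series $(1+x)^{-m}$ via $\frac{-x}{1+x}$, you can stay entirely within polynomials by factoring out $(1+x)^{N-m}$ directly:
\[
\sum_{l=0}^{m}(-1)^l\binom{m}{l}x^l(1+x)^{N-l}=(1+x)^{N-m}\sum_{l=0}^{m}\binom{m}{l}(-x)^l(1+x)^{m-l}=(1+x)^{N-m}\cdot 1,
\]
which avoids any question about formal power series versus rational functions. Your inclusion--exclusion remark at the end is also a perfectly good standalone proof.
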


We are going to use it to prove the following result.

\begin{proposition}\label{prop:square-h}
	For every non-negative integer $ t\leq k-1$, we have the following equality
	$$ h_0\dots h_{t-1} \pi_{1,*}(1) = \sum_{f=0}^{k-1} \frac{(N-f-t)!}{(N-k-t)!} \binom{k}{f}\tau^{k-1-f}h_0\dots h_{t+f-1} + I$$ 
	in the $T$-equivariant Chow ring of $\PP^N$. Again, for $t=0$, we end up with the formula for $\pi_{1,*}(1)$.
\end{proposition}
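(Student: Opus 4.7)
The plan is to expand $h_0\dots h_{t-1}\pi_{1,*}(1)$ using the explicit formula for $\pi_{1,*}(1)$ recorded in \Cref{rem:gener}, then to apply \Cref{prop:square-power} to each product $(h_0\dots h_{t-1})(h_0\dots h_{l-1})$ appearing in the expansion. Whether $l\leq t$ or $l>t$, the relevant instance of \Cref{prop:square-power} has $\min(t,l)$ squared factors and $|t-l|$ linear factors, and by the symmetry $\binom{t}{s}\binom{l}{s}=\binom{l}{s}\binom{t}{s}$ the two cases give a common formula
\[
(h_0\dots h_{t-1})(h_0\dots h_{l-1})=\sum_{s=0}^{\min(t,l)}(-1)^s\,s!\binom{t}{s}\binom{l}{s}\tau^s\, h_0\dots h_{t+l-s-1}.
\]

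Next I would substitute $f=l-s$ and collect the terms by the index $f$. Using the elementary identity $s!\binom{f+s}{s}\binom{k}{f+s}=\tfrac{k!}{f!(k-f-s)!}$, the coefficient of $\tau^{k-1-f}h_0\dots h_{t+f-1}$ in the expansion works out to
\[
C_f=\frac{k!}{f!\,(N-k)!}\sum_{s=0}^{\min(t,\,k-1-f)}(-1)^s\binom{t}{s}\frac{(N-f-s)!}{(k-f-s)!}.
\]
For $f\geq k-t$ the monomial $h_0\dots h_{t+f-1}$ contains the factor $h_0\dots h_{k-1}=\pi_{1,*}(h_0)\in I$ and so contributes nothing modulo $I$; in the remaining range $0\leq f\leq k-t-1$ we have $\min(t,k-1-f)=t$.

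The heart of the argument is then an application of \Cref{lem:comb} with the relabelling $\mathcal{N}=N-f$, $K=k-f+1$, $M=t$. Observing that $\binom{\mathcal{N}-s}{K-1-s}=\tfrac{(N-f-s)!}{(k-f-s)!\,(N-k)!}$ and that the factor $\binom{t}{s}$ truncates the summation from $s\leq K-1=k-f$ down to $s\leq t$, \Cref{lem:comb} yields
\[
\sum_{s=0}^{t}(-1)^s\binom{t}{s}\frac{(N-f-s)!}{(k-f-s)!}=\frac{(N-k)!\,(N-f-t)!}{(k-f)!\,(N-k-t)!}.
\]
Substituting back gives $C_f=\binom{k}{f}\tfrac{(N-f-t)!}{(N-k-t)!}$, which is precisely the coefficient appearing on the right-hand side of the statement. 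The main obstacle is the combinatorial bookkeeping in the reindexing step and the careful matching of parameters in \Cref{lem:comb}; no ideas beyond \Cref{prop:square-power} and \Cref{lem:comb} are required.
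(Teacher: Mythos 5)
Your proposal is correct and follows essentially the same route as the paper's proof: expand via the formula for $\pi_{1,*}(1)$ from \Cref{rem:gener}, apply \Cref{prop:square-power} to each product (the paper splits into the cases $l\leq t$ and $l>t$ and recombines, which is your $\min(t,l)$ observation), reindex by $f=l-s$, discard the terms with $t+f-1\geq k-1$ modulo $I$, and evaluate the remaining alternating sum with \Cref{lem:comb}. The parameter matching in \Cref{lem:comb} and the final coefficient $\binom{k}{f}\tfrac{(N-f-t)!}{(N-k-t)!}$ both check out.
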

\begin{proof}
	The left hand side of the equation in the statement can be written as
	\begin{equation*}
		\begin{split}
			\sum_{l=0}^{t}(k-l)!\binom{k}{l}\binom{N-k}{N-l}\tau^{k-1-l}h_0^2\dots h_{l-1}^2 h_l\dots h_{t-1} + \\ + \sum_{l=t+1}^{k-1}(k-l)!\binom{k}{l}\binom{N-k}{N-l}\tau^{k-1-l}h_0^2\dots h_{t-1}^2 h_t \dots h_{l-1}.;
		\end{split}
	\end{equation*}
	see \Cref{rem:gener}.
	If we apply \Cref{prop:square-power} to the two sums, we get 
	$$ \sum_{l=0}^t \sum_{s=0}^l (-1)^s\frac{k! (N-l)! t!}{(k-l)! (N-k)! s! (l-s)! (t-s)!} \tau^{k-1-l+s}h_0\dots h_{l+t-s-1}$$ 
	and 
	$$ \sum_{l=t+1}^t \sum_{s=0}^t (-1)^s\frac{k! (N-l)! t!}{(k-l)! (N-k)! s! (l-s)! (t-s)!} \tau^{k-1-l+s}h_0\dots h_{l+t-s-1}; $$
	if we exchange the sums in each factor and put everything together we end up with
	$$ \sum_{s=0}^t \sum_{l=s}^{k-1} (-1)^s\frac{k! (N-l)! t!}{(k-l)! (N-k)! s! (l-s)! (t-s)!} \tau^{k-1-l+s}h_0\dots h_{l+t-s-1}. $$
	Shifting the inner sum and setting $f:=l-s$, we get
	$$ \sum_{s=0}^t \sum_{f=0}^{k-1-s} (-1)^s\frac{k! (N-s-f)! t!}{(k-s-f)! (N-k)! s! f! (t-s)!} \tau^{k-1-f}h_0\dots h_{l+f-1}.$$ 
	Notice that we can extend the inner sum up to $k-1$ as all the elements we are adding are in the ideal $I$. Therefore we exchange the sums again and get 
	$$ \sum_{f=0}^{k-1} (-1)^s\frac{k!}{f!} \tau^{k-1-f}h_0\dots h_{l+f-1}\Big( \sum_{s=0}^t (-1)^s \binom{N-s-f}{k-s-f}\binom{t}{s}\Big) + I $$
	and we can conclude using \Cref{lem:comb}.
\end{proof}

We state now the last technical lemma.

\begin{lemma}\label{lem:tec}
	If we define $\Gamma_t$ to be the element 
	$$ \frac{(N-t)!}{(N-2k+1)!} \tau^{2(k-1)-t}h_0\dots h_{t-1}$$
	in the $T$-equivariant Chow ring of $\PP^N$, we have that $\Gamma_t \in I$ for every $t\leq k-1$. 
\end{lemma}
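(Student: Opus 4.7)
The plan is to extract from Proposition~\ref{prop:square-h} a clean triangular system of linear relations on the $\Gamma_s$'s modulo $I$, and then invert it by downward induction on $t$.

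First I would multiply both sides of the identity in Proposition~\ref{prop:square-h} by $\tau^{k-1-t}$. The left-hand side, being a multiple of $\pi_{1,*}(1)$, lies in $I$; so modulo $I$ the right-hand side gives
\[
\sum_{f=0}^{k-1}\frac{(N-f-t)!}{(N-k-t)!}\binom{k}{f}\,\tau^{2(k-1)-t-f}h_0\cdots h_{t+f-1}\;\in\;I.
\]
Reindex with $s=t+f$. The terms with $s\ge k$ contain the factor $h_0\cdots h_{k-1}=\pi_{1,*}(h_0)\in I$, so they can be dropped, leaving
\[
\sum_{s=t}^{k-1}\frac{(N-s)!}{(N-k-t)!}\binom{k}{s-t}\,\tau^{2(k-1)-s}h_0\cdots h_{s-1}\;\in\;I.
\]

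Next, since $t\le k-1$, the quantity $\frac{(N-k-t)!}{(N-2k+1)!}=(N-k-t)(N-k-t-1)\cdots(N-2k+2)$ is a (non-negative) integer, so I multiply the above relation by it. This replaces the denominator by $(N-2k+1)!$, and using the definition of $\Gamma_s$ one obtains the clean triangular relation
\[
\sum_{s=t}^{k-1}\binom{k}{s-t}\Gamma_s\;\in\;I
\qquad\text{for every }t\in\{0,1,\ldots,k-1\}.
\]

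I then conclude by downward induction on $t$. For $t=k-1$ the sum collapses to $\Gamma_{k-1}\in I$. For $t<k-1$, separating the $s=t$ term yields
\[
\Gamma_t+\sum_{s=t+1}^{k-1}\binom{k}{s-t}\Gamma_s\;\in\;I,
\]
and the inductive hypothesis that $\Gamma_s\in I$ for $s>t$ gives $\Gamma_t\in I$, as desired. The only real subtlety is the bookkeeping step of multiplying by the integer $\frac{(N-k-t)!}{(N-2k+1)!}$ so that no denominators remain — this is what converts the raw relation from Proposition~\ref{prop:square-h} into a manifestly integral triangular system in the $\Gamma_s$'s; once that is in place, no further computation is required.
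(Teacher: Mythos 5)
Your proposal is correct and follows essentially the same route as the paper: both arguments multiply the identity of Proposition~\ref{prop:square-h} by $\tau^{k-1-t}$ and the integer $\frac{(N-k-t)!}{(N-2k+1)!}$ to obtain the unitriangular relation $\sum_{f=0}^{k-1}\binom{k}{f}\Gamma_{t+f}\in I$, and then conclude by (downward) induction on $t$. Your write-up is in fact slightly cleaner in making explicit that the leading coefficient is $\binom{k}{0}=1$ and that the terms with $t+f\ge k$ are absorbed into $I$ via $h_0\cdots h_{k-1}=\pi_{1,*}(h_0)$.
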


\begin{proof}
	We proceed by induction on $m=k-1-t$. The case $m=0$ follows from the previous proposition. 
	
	Suppose that $\Gamma_s \in I$ for every $s\geq k-t$. If we consider the element in $I$ 
	$$ \frac{(N-k-t)!}{(N-2k+1)!}\tau^{k-1-t}h_0 \dots h_{t-1} \pi_{1,*}(1)$$ 
	we can apply the previous proposition again and get 
	$$ \sum_{f=0}^{k-1} \binom{k}{f}\frac{(N-f-t)!}{(N-2k+1)!} \tau^{2(k-1)-(f+t)}h_0\dots h_{t+f-1} \in I$$ 
	which is the same as 
	$$ \sum_{f=0}^{k-1} \binom{k}{f} \Gamma_{f+t} \in I$$. The statement follows by induction.
\end{proof}
\begin{remark}\label{rem:nec}
	It is important to notice that more is true, the same exact proof shows us that 
	$$ \Gamma_t \in \binom{2(k-1)-t}{k-1-t} \cdot I$$ 
	for every $t \leq k-1$. This will not be needed, except for the case $t=0$, where this implies that in particular $\Gamma_0 \in 2 \cdot I$.
\end{remark}
Before going to prove the final proposition, we recall the following combinatorial fact.

\begin{lemma}\label{lem:comb2}
	We have the following numerical equality 
	$$ \sum_{j_1+\dots +j_r=l}^{0\leq j_s \leq k-1} \binom{k}{j_1}\dots \binom{k}{j_r} = \binom{rk}{l}$$
	for every $l\leq k-1$. In particular in our situation we have $$\alpha_{l}^{k,r}=\binom{rk}{l}$$
	for $l\leq k-1$. 
\end{lemma}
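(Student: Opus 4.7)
The plan is to recognize this as a trivial consequence of the multi-variable Vandermonde convolution identity, once we observe that the range constraint is automatically vacuous in our range of $l$.

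Recall the standard Vandermonde--Chu convolution in $r$ variables:
\[
\sum_{\substack{j_1+\dots+j_r=l\\ j_s\geq 0}} \binom{k}{j_1}\dots\binom{k}{j_r}=\binom{rk}{l},
\]
valid for all $l\geq 0$ (with the convention $\binom{k}{j}=0$ for $j>k$). One standard way to see this is to compare coefficients of $x^l$ on the two sides of the polynomial identity $(1+x)^k \dotsm (1+x)^k = (1+x)^{rk}$.

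The key observation is that when $l \leq k-1$, any nonnegative decomposition $j_1+\dots+j_r = l$ automatically satisfies $j_s \leq l \leq k-1$ for every $s$. Consequently, imposing the upper bound $j_s \leq k-1$ does not remove any terms from the unrestricted sum, so the constrained sum on the left-hand side of the statement coincides with the Vandermonde convolution sum, proving the identity. The ``in particular'' clause follows immediately from the definition of $\alpha_l^{(k,r)}$ given earlier in the section.

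There is no substantive obstacle here; the only thing to be careful about is noting that the upper-bound restriction $j_s \leq k-1$ becomes vacuous precisely in the range $l \leq k-1$, which is why the hypothesis is necessary (for $l \geq k$ the two sums genuinely differ).
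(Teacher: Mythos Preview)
Your proof is correct. The paper itself does not supply a proof of this lemma; it is merely stated as a recalled combinatorial fact, so there is nothing to compare against beyond confirming that your generating-function/Vandermonde argument together with the observation that the upper bound $j_s\le k-1$ is vacuous for $l\le k-1$ is the standard and valid justification.
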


Finally, we are ready to prove the last statement.

\begin{proposition}
	We have $\pi_{r,*}(1)$ is contained in the ideal $I$ for $r \geq 2$. 
\end{proposition}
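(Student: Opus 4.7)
The strategy is to apply Lemma \ref{lem:pi-r-1} with $m=-1$ (so $d=r$), which yields
$$\pi_{r,*}(1) = \sum_{l=0}^{r(k-1)} \alpha_l^{(k,r)} \beta_l^{(k,-1,r)} \tau^{r(k-1)-l} h_0 \cdots h_{l-1}.$$
I would split this sum at $l=k$. The terms with $l\geq k$ manifestly contain $h_0\cdots h_{k-1}=\pi_{1,*}(h_0)$ as a factor, so they lie in $I$ automatically. For the terms with $l\leq k-1$, Lemma \ref{lem:comb2} simplifies $\alpha_l^{(k,r)}$ to $\binom{rk}{l}$, reducing the problem to showing that
$$S \;:=\; \sum_{l=0}^{k-1}\binom{rk}{l}\frac{(N-l)!}{(N-rk)!\,r!}\,\tau^{r(k-1)-l}h_0\cdots h_{l-1}$$
lies in $I$.

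To handle $S$, I would generalize Lemma \ref{lem:tec} to arbitrary $r\geq 2$. Define, for $0\leq t\leq k-1$,
$$\Gamma^{(r)}_t \;:=\; \frac{(N-t)!}{(N-rk+r-1)!}\,\tau^{r(k-1)-t}h_0\cdots h_{t-1},$$
so $\Gamma^{(2)}_t=\Gamma_t$ of Lemma \ref{lem:tec}. The claim is $\Gamma^{(r)}_t\in I$ for all $r\geq 2$ and all $t\leq k-1$. I would prove this by induction on $r$, the base case being Lemma \ref{lem:tec}. For the inductive step, starting from $\tau^{(r-2)(k-1)}h_0\cdots h_{t-1}\pi_{1,*}(1)\in I$ and feeding it into Proposition \ref{prop:square-h} (possibly after a preliminary multiplication by a suitable power of $\tau$), one obtains a recursion of the shape
$$\sum_{f=0}^{k-1}\binom{k}{f}\,\Gamma^{(r)}_{t+f} \;\in\; I,$$
identical in structure to the one appearing at the end of the proof of Lemma \ref{lem:tec}. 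A descending induction on $t$ (from $t=k-1$ down to $t=0$) then extracts $\Gamma^{(r)}_t\in I$ term by term.

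Once the claim is established, the numerical identity
$$\binom{rk}{l}\frac{(N-l)!}{(N-rk)!\,r!} \;=\; \binom{rk}{l}\,\frac{1}{r!}\frac{(N-rk+r-1)!}{(N-rk)!}\cdot \frac{(N-l)!}{(N-rk+r-1)!}$$
rewrites each summand of $S$ as an integer (resp.\ $\ZZ[1/6]$) multiple of $\Gamma^{(r)}_l$, completing the proof. The main obstacle I anticipate is the numerical book-keeping at this last stage: one must verify that the prefactor $\binom{rk}{l}(N-rk+r-1)!/(N-rk)!\,r!$ is genuinely an integer (or is absorbed by the explicit divisibility of $\Gamma^{(r)}_l$, in the spirit of Remark \ref{rem:nec}); otherwise the cleanly-looking recursion could fail to land inside $I$. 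As in the proof of Lemma \ref{lem:tec}, a refinement tracking the binomial factor produced at each reduction step should provide exactly the needed divisibility.
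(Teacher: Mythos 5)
Your skeleton is the same as the paper's: expand $\pi_{r,*}(1)$ via \Cref{lem:pi-r-1} with $m=-1$, discard the terms with $l\geq k$ (they are visibly multiples of $h_0\cdots h_{k-1}=\pi_{1,*}(h_0)$), simplify $\alpha_l^{(k,r)}$ with \Cref{lem:comb2}, and reduce to an integrality check on the prefactor of a $\Gamma$-type class. Two remarks on the middle step. First, the induction on $r$ you propose for $\Gamma^{(r)}_t\in I$ is superfluous: since $I$ is an ideal and $\tau\in\ch(\cB T)$, the membership $\tau^{(r-2)(k-1)}\Gamma_t\in I$ is immediate from \Cref{lem:tec}, and your $\Gamma^{(r)}_t$ is an integer multiple of this element (the ratio of factorials $(N-2k+1)!/(N-rk+r-1)!$ is a product of $(r-2)(k-1)$ consecutive integers). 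So no new lemma is needed.

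Second, and this is the genuine gap: the integrality you defer to the end actually fails for your normalization. Your prefactor is
$$\binom{rk}{l}\,\frac{(N-rk+r-1)!}{(N-rk)!\,r!}\;=\;\frac{1}{r}\binom{rk}{l}\binom{N-rk+r-1}{r-1},$$
and a binomial coefficient $\binom{M}{r-1}$ need not be divisible by $r$: for instance $r=3$, $l=0$, $N=3k$ gives $\frac{1}{3}\binom{3k}{0}\binom{2}{2}=\frac13$, which is not an integer (nor in $\ZZ[1/6]$), even though the corresponding summand genuinely lies in $I$. You have simply given away too much of the factorial when normalizing $\Gamma^{(r)}_l$. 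The paper avoids this by \emph{not} renormalizing: it writes the $l$-th summand as $\binom{rk}{l}\frac{(N-2k+1)!}{(N-rk)!\,r!}\,\tau^{(r-2)(k-1)}\Gamma_l$ with $\Gamma_l$ exactly as in \Cref{lem:tec}, rewrites the coefficient as $\binom{rk}{l}\binom{N-rk+r}{r}\frac{(N-2k+1)!}{(N-rk+r)!}$, which is an integer as soon as $r\geq 3$, and handles the leftover case $r=2$, $l=0$ by the extra divisibility $\Gamma_0\in 2\cdot I$ recorded in \Cref{rem:nec}. So the fix is precisely to drop your $\Gamma^{(r)}$ and keep the $r=2$ normalization throughout; the "refinement tracking binomial factors" you anticipate is only needed for the single term $r=2$, $l=0$, where it is supplied by \Cref{rem:nec}.
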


\begin{proof}
	Notice that 
	\begin{equation*}
		\begin{split}
			\pi_{r,*}(1)&=\sum_{l=0}^{r(k-1)} \alpha_l^{(k,r)}\beta_l^{(k,-1,r)}\tau^{r(k-1)-l}h_0\dots h_{l-1}=\\ & =\sum_{l=0}^{k-1} \alpha_l^{(k,r)}\beta_l^{(k,-1,r)}\tau^{r(k-1)-l}h_0\dots h_{l-1} + I
		\end{split}
	\end{equation*}
	therefore we have to study the first $k-1$ elements of the sum. Using \Cref{lem:comb2}, we get the following chain of equalities modulo the ideal I;
	\begin{equation*}
		\begin{split}
			\pi_{r,*}(1)&= \sum_{l=0}^{k-1} \binom{rk}{l}\frac{(N-l)!}{(N-rk)!r!}\tau^{r(k-1)-l}h_0\dots h_l= \\ & =
			\sum_{l=0}^{k-1}\binom{rk}{l}\frac{(N-2k+1)!}{(N-rk)!r!}\tau^{(r-2)(k-1)}\Gamma_l;
		\end{split}		
	\end{equation*}
	therefore it remains to prove that the coefficient
	$$
	\binom{rk}{l}\frac{(N-2k+1)!}{(N-rk)!r!}
	$$
	is an integer for every $r \geq 2$ and any $l\leq k-1$. First of all, we notice that this is the same as
	$$\binom{rk}{l}\binom{N-rk+r}{r}\frac{(N-2k+1)!}{(N-rk+r)!}$$
	which implies that for $r\geq 3$ and $l\leq k-1$ this is an integer. It remains to prove the statement for $r=2$, i.e. to prove that the number 
	$$ \binom{2k}{l}\frac{N-2k+1}{2}$$
	is an integer. Notice that it is clearly true for $l\geq 1$ but not for $l=0$. However, we have that $\Gamma_0 \in 2\cdot I$ by \Cref{rem:nec}, therefore we are done.
\end{proof}

\begin{corollary}
	The ideal generated by the relations induced by $\Delta^k$ in $\PP^N$ is generated by the two elements $\pi_{1,*}(1)$ and $\pi_{1,*}(h_0)$. See \Cref{rem:gener} for the explicit description.
\end{corollary}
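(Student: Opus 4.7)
The plan is simply to assemble the two preceding propositions together with the reduction steps that were already carried out in the text. First, by the Chow envelope property, the ideal $\im(i_*)$ with $i \colon \Delta_k \hookrightarrow \PP^N$ coincides with the ideal generated by $\bigcup_{r=1}^{\lfloor N/k\rfloor} \im(\pi_{r,*})$. Second, using the projection formula and the identity $\pi_r^*(\xi_N)=k\xi_r+\xi_{N-kr}$, together with the fact that the $\ch_T(\PP^{N-kr})$ factor is a polynomial ring in $\xi_{N-kr}$ modulo a monic relation pulled back from $\ch(\cB\GL_2)$, one reduces to studying the classes $\pi_{r,*}(\xi_r^m)$ for $0\le m\le r$.

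Next I would invoke the remark that expresses $\pi_{r,*}(\xi_r^m)$ as a $\ch_T$-linear combination of the $\pi_{r,*}(h_0\cdots h_{j-1})$ for $0\le j\le m$, where the $j=0$ term is interpreted as $\pi_{r,*}(1)$. After this step, the whole problem reduces to proving that every class of the form
\[
\pi_{r,*}(h_0\cdots h_{m-1})\qquad (1\le r\le N/k,\ 0\le m\le r)
\]
lies in the ideal $I:=(\pi_{1,*}(1),\pi_{1,*}(h_0))$.

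At this point I would simply appeal to the two propositions already established: the first handles $\pi_{r,*}(h_0\cdots h_m)$ for $r\ge 1$ and $m\ge 0$ by observing that every term in the explicit formula of \Cref{lem:pi-r-1} is divisible by $h_0\cdots h_{k-1}=\pi_{1,*}(h_0)$, and the second handles $\pi_{r,*}(1)$ for $r\ge 2$ via the combinatorial manipulations building on \Cref{prop:square-power}, \Cref{prop:square-h} and \Cref{lem:tec}. The remaining two classes $\pi_{1,*}(1)$ and $\pi_{1,*}(h_0)$ are by definition the chosen generators of $I$.

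The real content of the corollary is therefore packaged in the two propositions, and the only serious obstacle in the chain is the $r=2$, $l=0$ case of the second proposition, where the binomial coefficient $\binom{N-2k+1}{2}/2!\cdot \binom{2k}{0}$ is a priori not an integer, and one must invoke the refined statement of \Cref{rem:nec} that $\Gamma_0\in 2\cdot I$ to close the argument. Once that integrality point is absorbed, the corollary follows by pure assembly of the pieces.
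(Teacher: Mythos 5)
Your proposal is correct and follows exactly the route the paper intends: the Chow envelope reduces the ideal to the images of the $\pi_{r,*}$, the projection formula and the remark reduce these to the classes $\pi_{r,*}(h_0\cdots h_m)$ (with $m=-1$ giving $\pi_{r,*}(1)$), and the two final propositions — divisibility by $h_0\cdots h_{k-1}=\pi_{1,*}(h_0)$ for $m\geq 0$, and the combinatorial argument culminating in the $r=2$, $l=0$ case handled via $\Gamma_0\in 2\cdot I$ from \Cref{rem:nec} — cover all cases. This is precisely the assembly the paper leaves implicit, so nothing further is needed.
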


\bibliographystyle{plain}
\bibliography{Bibliografia}

\end{document}